\let\polishL=\L
\newcommand{\eps}{\varepsilon}
\renewcommand{\phi}{\varphi}
\newcommand{\N}{\mathbb{N}}
\newcommand{\R}{\mathbb{R}}
\newcommand{\Rbar}{\overline{\mathbb{R}}}
\newcommand{\inner}[1]{\langle #1 \rangle}
\newcommand{\norm}[1]{\|#1\|}
\newcommand{\vnorm}[1]{|#1|}
\newcommand{\wkto}{\rightharpoonup}
\DeclareMathOperator{\sign}{\mathrm{sign}}
\DeclareMathOperator{\dom}{\mathrm{dom}}
\newcommand{\polar}[1]{#1^{\circ}}
\newcommand{\term}{\emph}
\newcommand{\iprod}[2]{\langle #1,#2\rangle}
\newcommand{\B}{B}
\newcommand{\abs}[1]{|#1|}
\newcommand{\inv}[1]{#1^{-1}}
\newcommand{\grad}[1]{\nabla #1}
\newcommand{\freevar}{\mkern0.5\thinmuskip{\boldsymbol\cdot}\mkern0.5\thinmuskip}
\newcommand{\Union}\bigcup
\newcommand{\Isect}\bigcap
\newcommand{\union}\cup
\newcommand{\isect}\cap
\newcommand{\bigunion}\bigcup
\newcommand{\bigisect}\bigcap
\newcommand{\defeq}{\coloneqq}
\newcommand{\downto}{\searrow}
\newcommand{\subdiff}{\partial}
\newcommand{\ind}{\iota}
\newcommand{\noise}{\delta}
\DeclareMathOperator*{\argmin}{arg\,min}
\DeclareMathOperator*{\esssup}{ess\,sup}
\DeclareMathOperator{\closure}{cl}
\DeclareMathOperator{\conv}{conv}
\DeclareMathOperator{\epi}{epi}
\DeclareMathOperator{\graph}{Graph}
\def\Laplacian{\Delta}
\def\realopt#1{\widehat #1}
\newcommand{\setto}{\rightrightarrows}
\def\Primal{u}
\def\Dual{v}
\def\coPrimal{\xi}
\def\coDual{\eta}
\def\FdVar{z}
\def\coFdVar{\zeta}
\def\dcVar{z}
\def\DerivConeSym{V}
\def\DerivConeDSym{W}
\newcommand{\DerivCone}[1][\relax]{\DerivConeSym\ifx\relax#1\relax\else(#1)\fi}
\newcommand{\DerivConePrime}[1][\relax]{\tilde\DerivConeSym\ifx\relax#1\relax\else(#1)\fi}
\newcommand{\DerivConeX}[2][]{\DerivConeSym_{#2}\ifx\relax#1\relax\else(#1)\fi}
\newcommand{\DerivConeF}[1][]{\DerivConeX[#1]{\subdiff F^*}}
\newcommand{\DerivConeG}[1][]{\DerivConeX[#1]{\subdiff G}}
\newcommand{\DerivConeDXt}[3][]{\DerivConeDSym_{#3}^{#2}\ifx\relax#1\relax\else(#1)\fi}
\newcommand{\DerivConeDXtp}[3][]{\DerivConeDSym_{#3}^{\circ,#2}\ifx\relax#1\relax\else(#1)\fi}
\newcommand{\DerivConeDFt}[2][]{\DerivConeDXt[#1]{#2}{\subdiff F^*}}
\def\dir#1{\Delta #1}
\def\alt#1{#1'}
\def\altalt#1{#1''}
\def\theData{y^\noise}
\def\baseu{{\bar{u}}}
\def\realoptq{{\realopt{q}}}
\def\realoptu{{\realopt{u}}}
\def\realoptpsi{{\realopt{\Dual}}}
\def\lipnum{\ell}
\newcommand{\lip}[1]{\lipnum_{#1}}
\newcommand{\tildelip}[1]{{\widetilde \lipnum}_{#1}}
\newcommand{\kvbound}{b}
\newcommand{\grbound}{a}
\newcommand{\frechetCod}[1][]{\widehat D^*_{#1}}
\def\somesetmap{R}
\def\altsetmap{J}
\def\basesetmap{R_0}
\def\somesetmapfindim{R}
\renewcommand{\L}{\mathcal{L}}
\newsavebox{\accentbox}
\newcommand{\compositeaccents}[2]{%
  \sbox\accentbox{$#2$}#1{\usebox\accentbox}}
\title{Stability of saddle points via explicit coderivatives of pointwise subdifferentials}
\author{%
    Christian Clason\thanks{Faculty of Mathematics, University Duisburg-Essen, 45117 Essen, Germany (\email{christian.clason@uni-due.de})}
    \and 
    Tuomo Valkonen\thanks{Department  of  Applied  Mathematics  and  Theoretical  Physics,  University  of  Cambridge, UK (\email{tuomov@iki.fi})}
}
\begin{document}

\maketitle

\begin{abstract}
    We derive stability criteria for saddle points of a class of nonsmooth optimization problems in Hilbert spaces arising in PDE-constrained optimization, using metric regularity of infinite-dimensional set-valued mappings. A main ingredient is an explicit pointwise characterization of the regular coderivative of the subdifferential of convex integral functionals. This is applied to several stability properties for parameter identification problems for an elliptic partial differential equation with non-differentiable data fitting terms.
\end{abstract}

\section{Introduction}

This work is concerned with optimization problems of the form
\begin{equation}
    \label{eq:prob_convex}
    \min_{u\in X} G(u) + F(K(u))
\end{equation}
for proper, convex and lower semicontinuous functionals $G: X\to \Rbar:=\R\cup \{\infty\}$ and $F: Y\to\Rbar$ and a  Fréchet-differentiable operator $K:X\to Y$ between two Hilbert spaces $X$ and $Y$, in particular for integral functionals on $L^2(\Omega)$ of the form $F(u) = \int_\Omega f(u(x))\,dx$ for a convex integrand $f:\R\to\R$. 
Under suitable regularity assumptions, a minimizer $\bar u\in X$ satisfies 
\begin{equation}
\label{eq:oc}
\left\{\begin{aligned}
    -\bar p &\in \grad{K}(\bar u)^*(\partial F(K(\bar u))),\\
    \bar p &\in \partial G(\bar u),
\end{aligned}\right.
\end{equation}
for some $\bar p\in X^*$. Here, $\partial F$ denotes the convex subdifferential of $F$ and $\grad{K}(u)^*$ the adjoint of the Fréchet derivative of $K$ at $u$. Using convex duality, one can characterize the primal-dual pair $(\bar u,\bar p)$ via the saddle point ($\bar u,\bar \Dual$) of the Lagrangian
\begin{equation}
    \label{eq:lagrangian}
    L(u,\Dual) := G(u) + \inner{K(u),\Dual} - F^*(\Dual),
\end{equation}
where $F^*:Y^*\to\Rbar$
denotes the Fenchel conjugate of $F$; in this case, $\bar p$ and $\bar v$ are related via $\bar p = \grad K(\bar u)^* \bar \Dual$.

To fix ideas, a prototypical example is the $L^1$ fitting problem~\cite{ClasonJin:2011}
\begin{equation}
    \label{eq:l1fit_problem}
    \min_u \norm{S(u) - \theData}_{L^1} + \frac{\alpha}2 \norm{u}_{L^2}^2.
\end{equation}
Here, $G(u) = \frac{\alpha}2\norm{u}_{L^2}^2$, $K(u)=S(u) - \theData$, where $S$ maps $u$ to the solution $y$ of $-\Laplacian y+uy=f$ for given $f$, and $F(y) = \norm{y}_{L^1}$. This formulation is appropriate if the given data $y^\delta$ is corrupted by impulsive noise. Here, $F^*(\Dual) = \ind_{\{|\Dual(x)|\leq 1\}}(\Dual)$,
where $\ind_A$ denotes the indicator function of the set $A$ in the sense of convex analysis \cite{Hiriart-Urruty:1993}.
A second example is the Morozov (constrained) formulation of inverse problems appropriate for data subject to uniformly distributed noise \cite{Clason:2012},
\begin{equation}
    \label{eq:linffit_problem}
    \min_u \frac{\alpha}2 \norm{u}_{L^2}^2
    \quad\text{s.\,t.}\quad |S(u)(x) -\theData(x) |\leq \noise  \quad\text{a.\,e. in } \Omega.
\end{equation}
Here, $G$ and $K$ are as before, while $F(y) = \ind_{\{|y(x)|\leq \noise\}}(y)$ and hence $F^*(\Dual)=\noise \norm{\Dual}_{L^1}$. A similar problem arises in optimal control of partial differential equations with state constraints.

As we show in \cref{sec:saddle-vi}, critical points of the saddle point problem \eqref{eq:prob_convex} may be characterized concisely through the variational inclusion
\begin{equation}
    0 \in \basesetmap(\Primal, \Dual),
\end{equation}
where we define the set-valued mapping $\basesetmap: X \times Y \setto X \times Y$ by
\begin{equation}
    \basesetmap(\Primal, \Dual) \defeq
        \begin{pmatrix}
            \subdiff G(\Primal) + \grad K(\Primal)^* \Dual \\
            \subdiff F^*(\Dual) - K(\Primal)
        \end{pmatrix}.
\end{equation}
Our goal then is to study the stability of solutions to \eqref{eq:prob_convex} resp.~\eqref{eq:oc} through set-valued analysis of this mapping. 
The main tool is a form of Lipschitz continuity of $\inv\basesetmap$ known as the \emph{Aubin property} (or \emph{pseudo-Lipschitz} or \emph{Lipschitz-like} property). This is also called the \emph{metric regularity} of $\basesetmap$; see, e.g., \cite{Rockafellar:1998,Mordukhovich:2006,Mordukhovich:1993,Dontchev:2014,Ioffe:2015}.
In contrast, second-order stability analysis for PDE-constrained optimization problems is usually focused on the stability of the optimal values and of minimizers (as opposed to saddle-points) and is based on sufficient second-order conditions based on directional derivatives, often in stronger topologies; nonsmoothness typically arises from pointwise constraints or, more recently, sparsity penalties.
We refer to \cite{BonnansShapiro,Casas:2015} as well as the literature cited therein.

Since the problem \eqref{eq:prob_convex} is nonsmooth, the first-order conditions involve proper subdifferential inclusions and hence the second-order analysis required for showing metric regularity involves set-valued derivatives. Considerable effort has been expended on obtaining explicit representations of these derivative, although up to now primarily in the finite-dimensional setting, e.g., in \cite{Mordukhovich:2001,Henrion:2002,Henrion:2013a,Henrion:2014,Outrata:2015a}, with a focus on normal cones arising from inequality constraints.
The difficulty in the infinite-dimensional setting stems from the fact that there exists a variety of more or less abstract definitions of such objects, see, e.g., \cite{Mordukhovich:2006}, although more explicit characterizations can be obtained in some concrete situations \cite{Henrion:2010}. 
Here, by exploiting the fact that the nonsmooth functionals are defined pointwise via convex integrands, we are able to explicitly compute regular coderivatives pointwise using the finite-dimensional theory from \cite{Rockafellar:1998}; see also~\cite{Mordukhovich:2006} for further developments on their calculus. One of the main contributions of this work is therefore to further narrow the gap between the concrete finite-dimensional and the abstract infinite-dimensional settings.

Besides being of inherent interest, e.g., for showing stability of the solution of the inverse problem with respect to $\noise$, metric regularity is also relevant to convergence of optimization methods. In the context of the saddle point problem for the Lagrangian \eqref{eq:lagrangian}, it is required for the nonlinear primal-dual hybrid gradient method of \cite{Valkonen:2008}. More widely, through the equivalence \cite{Bolte:2010} of the Aubin property to the recently eminent Kurdyka-{\polishL}ojasiewicz property \cite{Kurdyka:1998,Lojasiewicz:1970,Lojasiewicz:1963}, metric regularity is relevant to the convergence of a wide range of descent methods \cite{Bolte:2013}. It can also be used to directly characterize the convergence of certain basic optimization methods \cite{Bolte:2010,Klatte:2002,Klatte:2009}.
Metric regularity is also closely related to the concept of tilt-stability, mainly studied in finite dimensions, see, e.g., \cite{Rockafellar:1998b,Mordukhovich:2012,Drusvyatskiy:2013,Eberhard:2012,Lewis:2013}, but recently also in infinite dimensions \cite{Mordukhovich:2013,Outrata:2015a}. An extended concept incorporating tilt stability is that of full stability \cite{Levy:2000,Mordukhovich:2014}.

We also note that when the non-linear saddle-point problem can be written as the minimization of a difference of convex functions -- as any $C^2$ objective can \cite{Tuy:1995} -- detailed characterizations exist in the finite-dimensional case of local minima \cite{Valkonen:2010} and sensitivity \cite{Valkonen:2008}. Moreover, a set-valued analysis of the solvability of such programs with further symmetric cone structure is performed in \cite{Valkonen:2013}. 
In certain cases, with a finite-dimensional control $u$ in an otherwise infinite-dimensional problem, it is also possible to do away with the regularizer $G$ \cite{DeLosReyes:2015a}.

This work is organised as follows. In \cref{sec:pointwise_deriv}, we derive pointwise characterizations of second-order subdifferentials or generalized Hessians of integral functionals on $L^2$ and give examples for several functionals commonly occurring in variational methods for inverse problems, image processing, and PDE-constrained optimization. These results are used in \cref{sec:stability_vi} to give an explicit form of the Mordukhovich criterion  for set-valued mappings in Hilbert spaces, in particular for those arising from subdifferentials of the integral functionals considered in the preceding section. \cref{sec:stability_sp} further specializes this to the case of set-valued mappings arising from the first-order optimality conditions \eqref{eq:oc} and gives sufficient conditions for several stability properties such as stability with respect to perturbation of the data. Finally, \cref{sec:stability_paramid} discusses the satisfiability of these conditions in the specific case of the model parameter identification problems \eqref{eq:l1fit_problem} and~\eqref{eq:linffit_problem}, where it will turn out that stability can only be guaranteed after either introducing a Moreau--Yosida regularization or a projection to a finite-dimensional subspace in~$F$.

\section{Derivatives and coderivatives in \texorpdfstring{$\scriptstyle L^2(\Omega)$}{L²(Ω)}}\label{sec:pointwise_deriv}

Sadly, we cannot as in \cite{Valkonen:2014} directly use the clean finite-dimensional theory from \cite{Rockafellar:1998} to show the Aubin property of $\inv \basesetmap$ through the Mordukhovich criterion \cite{Mordukhovich:1992}.
We have to delve into the various complications of the infinite-dimensional setting as presented in \cite{Mordukhovich:2006}. The first one is the multitude of different definitions of set-valued generalized derivatives.
Luckily however, as it will turn out, because of the \emph{pointwise nature} of the non-smooth functionals whose second derivatives we require, we will be able to compute the pointwise differentials using the finite-dimensional theory, and limit ourselves to the regular coderivative in infinite dimensions.
Although the results of this section hold for integral functionals on $L^p$ for any $1\leq p<\infty$, we restrict the presentation to the Hilbert space $L^2$ for simplicity (and since we will make use of the Hilbert space structure of the saddle-point problem \eqref{eq:oc} later anyway).

\subsection{Set-valued mappings and coderivatives}

We first collect some notations and definitions for set-valued mappings in Hilbert spaces, following \cite{Rockafellar:1998,Mordukhovich:2006} and simplifying the setting of the latter to Hilbert spaces.
The symbols $X, Y, Q$, and $W$ generally stand for (infinite-dimensional) Hilbert spaces, which we identify throughout with their duals via the Riesz isomorphism.
With $x \in X$, we then denote by $B(x, r)$ the open ball of radius $r>0$. The closure of a set $A$ we denote by $\closure A$.

\begin{definition}
    Let $U \subset X$ for $X$ a Hilbert space. Then we define the set of \emph{Fréchet (or regular) normals} to $U$ at $u\in U$ by 
    \[
        \widehat N(u; U)
        \defeq
        \left\{
            z \in X
            \,\middle|\,
            \limsup_{U \ni \alt{u} \to u}\frac{\iprod{z}{\alt{u}-u}}{\norm{\alt{u}-u}} \le 0
        \right\}
    \]
    and the set of \emph{tangent vectors} by 
    \begin{equation}
        \label{eq:tangent}
        T(u; U) \defeq \left\{ z \in X \,\middle|\, \text{exist } \tau^i \downto 0 \text{ and } u^i \in U \text{ such that } z=\lim_{i \to \infty} \frac{u^i-u}{\tau^i} \right\}.
    \end{equation}
For a convex set $U$, these coincide with the usual normal and tangent cones of convex analysis.
\end{definition}

For our general results, we will need to impose some geometric regularity assumptions.
\begin{definition}
    \label{def:geomderiv}
    We say that a tangent vector $z \in T(u; U)$ is \term{derivable} if there exists an $\eps>0$ and a curve $\xi: [0, \eps] \to U$ such that
    \[
        \xi_+'(0) \defeq \lim_{t \downto 0} \frac{\xi(t)-\xi(0)}{t}
    \]
    exists with $\xi_+'(0)=z$ and $\xi(0)=u$.
    We say that $U$ is \term{geometrically derivable} if for every $u \in U$, every $z \in T(u; U)$ is derivable.
\end{definition}

It is easy to see that (cf.~\cite[Prop.~6.2]{Rockafellar:1998}) $U$ is geometrically derivable if and only if $T(u; U)$ for each $u \in U$ is defined by a full limit, i.e., we replace in \eqref{eq:tangent} the existence of $\tau^i \downto 0$ by the requirement of the existence of $u^i$ for any sequence $\tau^i \downto 0$.

For any cone $\DerivCone \subset X$, we also define the \emph{polar cone}
\begin{equation}
    \label{eq:polar}
    \polar \DerivCone \defeq \{ \dcVar \in X \mid \iprod{\dcVar}{\dcVar'} \le 0 \text{ for all } \dcVar' \in \DerivCone\}.
\end{equation}

We use the notation $\somesetmap: Q \setto W$ to denote a set-valued mapping $\somesetmap$ from $Q$ to $W$; i.e., for every $q \in Q$ holds $\somesetmap(q) \subset W$. For $\somesetmap: Q \setto W$, we define the domain $\dom \somesetmap \defeq \{q\in Q\mid\somesetmap(q)\neq \emptyset\}$ and graph $\graph\somesetmap \defeq \{(q,w)\subset Q\times W\mid w\in\somesetmap(q)\}$. 
The regular coderivatives of such maps are defined graphically with the help of the normal cones. 
\begin{definition}
    \label{def:frechetcod}
    Let $Q$ and $W$ be Hilbert spaces, and
    $\somesetmap: Q \setto W$ with $\dom \somesetmap \ne \emptyset$. 
    We then define the \emph{regular coderivative} 
    $\frechetCod \somesetmap(q | w): W \setto Q$ of $\somesetmap$ at $q\in Q$ for $w\in W$ as 
    \begin{equation}
        \label{eq:frechetcod}
        \frechetCod \somesetmap(q | w)(\dir{w}) \defeq
        \left\{ \dir{q} \in Q \mid (\dir{q}, -\dir{w}) \in \widehat N((q, w); \graph \somesetmap) \right\}.
    \end{equation}
    We also define the \emph{graphical derivative} $D\somesetmap(q|w): Q \setto W$ by
    \begin{equation}
        \label{eq:graphderiv-0}
        D \somesetmap(q | w)(\dir{q}) \defeq
        \left\{ \dir{w} \in W \mid (\dir{q}, \dir{w}) \in T((q, w); \graph \somesetmap) \right\}.
    \end{equation}
\end{definition}

The graphical derivative may also be written as \cite{Mordukhovich:2006,Rockafellar:1998}
\begin{equation}
    \label{eq:graphderiv}
    D\somesetmap(q|w)(\dir{q}) = \limsup_{t \downto 0,\, \dir{\alt{q}} \to \dir{q}} \frac{\somesetmap(q+t \dir{\alt{q}})-w}{t}.
\end{equation}
Here $\limsup_{\alt{t} \to t} A_{\alt{t}}$ stands for the \emph{outer limit} of a sequence of sets $\{A_t \subset W\}_{t \in T}$ over an index set $T$, defined as
\[
    \limsup_{t \to \infty} A_t \defeq \bigl\{w \in W \bigm| \text{for each } i \in \N \text{ exist } t^i \in T,\, w^i \in A_{t^i},\,\text{s.t. } t^i \to t\text{ and } w^i \to w\bigr\}.
\]
Observe that $D\somesetmap(q|w): Q \setto W$ whereas $\frechetCod\somesetmap(q|w): W \setto Q$. Indeed, if $\somesetmap(q)=Aq$ for a linear operator $A$ between Hilbert spaces, then for $w=Aq$ holds
\[
    D\somesetmap(q|w) = A \qquad\text{and}\qquad \frechetCod\somesetmap(q|w)=A^*.
\]
The former is immediate from  \eqref{eq:graphderiv} (see also, e.g., \cite[Ex.~8.34]{Rockafellar:1998}), while the latter is contained in \cite[Cor.~1.39]{Mordukhovich:2006}.

We say that $\graph \somesetmap$ is \emph{locally closed} at $(q,w)$, if there exists a closed set $U \subset Q \times W$ such that $\graph \somesetmap \isect U$ is closed.
For any convex lower semicontinuous function $f: Q\to\Rbar$‚ the graph of the subdifferential $\partial f$, considered as a set-valued mapping, is closed.
This is an immediate consequence of the definition of the convex subdifferential.
Finally, $\somesetmap$ is called \emph{proto-differentiable} if $\graph \somesetmap$ is geometrically derivable. 

\subsection{Second-order derivatives of pointwise functionals}
\label{sec:second-deriv}

Let $X=L^2(\Omega; \R^m)$ for an open domain $\Omega \subset \R^n$ and $G:X\to\Rbar$ be given by
\begin{equation}
    \label{eq:g-pointwise-integral}
    G(u)=\int_\Omega g(x, u(x)) \,d x
\end{equation}
for some integrand $g: \Omega \times \R^m \to (-\infty, \infty]$.
Here we assume that
\begin{enumerate}[label=(\roman*)]
    \item $g$ is normal, i.e., the epigraphical mapping $x\mapsto \epi g(x,\cdot)\subset \R^m\times \R$ is closed-valued and measurable,
    \item $g$ is proper and convex, i.e., the mapping $z\mapsto g(x,z)$ is proper and convex for each fixed $x\in\Omega$.
    \item $\subdiff g$ is \emph{pointwise a.\,e.~proto-differentiable}, i.e., the mapping $z\mapsto \partial_z g(x,z)$ is proto-differentiable for a.\,e. $x\in\Omega$.
\end{enumerate}
We call an integrand satisfying (i--iii) \emph{regular}.
Note that (i) already implies that the mapping $z\mapsto g(x,z)$ is lower semicontinuous for each fixed $x\in\Omega$ and that $g(x,u(x))$ is measurable for each $u\in X$ \cite[Prop.~14.28]{Rockafellar:1998}. 
Examples of normal integrands are finite-valued Carathéodory functions \cite[Ex.~14.29]{Rockafellar:1998} and indicator functions of a closed-valued Borel-measurable mapping $C: \Omega \setto \R^m$ \cite[Ex.~14.32]{Rockafellar:1998}. 
For a normal integrand, \eqref{eq:g-pointwise-integral} is well-defined, and $G(u) <\infty$ if and only if $u(x)\in\dom g(x,\cdot)$ almost everywhere \cite[Prop.~14.58]{Rockafellar:1998}. 

Proto-differentiability is more restrictive but holds for a large class of practically relevant examples. In particular, under the present assumptions that $g(x, \freevar)$ is a proper, convex, lower semicontinuous function on a finite-dimensional domain, (iii) is equivalent to $g(x, \freevar)$ being \emph{twice epi-differentiable}; see \cite[Def.~13.6]{Rockafellar:1998} for the (technical) definition as well as \cite[Prop.~8.41, Ex.~13.30, Thm.~13.40]{Rockafellar:1998}. 
It is therefore satisfied, e.g., for the maximum of a finite number of $C^2$ functions \cite[Ex.~13.16]{Rockafellar:1998} and for proper, convex and piecewise linear-quadratic functions \cite[Prop.~13.9]{Rockafellar:1998}.
More general ways to verify the proto-differentiability of $\subdiff g(x, \freevar)$ even without convexity include the concepts of \emph{full amenability} \cite[Def.~10.23 \& Cor.~13.41]{Rockafellar:1998} and \emph{prox-regularity} \cite[Def.~13.27 \& Thm.~13.40]{Rockafellar:1998}.

Since $X=L^2(\Omega;\R^m)$ is decomposable, it suffices to have existence of at least one $u_0\in \dom G$ to be able to compute pointwise the Fenchel conjugate 
\begin{equation}
    \label{eq:g-pointwise-conjugate}
    G^*(u) = \int_\Omega g^*(x, u(x)) \,d x
\end{equation}
and the convex subdifferential
\begin{equation}
    \label{eq:g-pointwise-subdiff}
    \subdiff G(u) = \{ \coPrimal \in X
                        \mid
                        \coPrimal(x) \in \subdiff g(x, u(x)) \text{ for a.\,e. } x \in \Omega \},
\end{equation}
where conjugate and subdifferential of $g(x,z)$ are understood as taken with respect to $z$ for $x$ fixed; see \cite[Thm.~3C]{Rockafellar:1976} and \cite[Cor.~3F]{Rockafellar:1976}, respectively.

In order to calculate $\frechetCod \subdiff G(x)$, we observe that
\[
    \graph[\subdiff G]=\{ (u, \coPrimal) \in X \times X \mid \coPrimal(x) \in \subdiff g(x, u(x))
                                                     \text{ for a.\,e. } x \in \Omega \}.
\]
Since $g$ is normal and convex, $\graph \subdiff g$ is measurable and closed-valued \cite[Prop.~14.56]{Rockafellar:1976}.
Thus, in the definition of $\widehat N(\hat u, \hat \coPrimal; \graph[\subdiff G])$, we are dealing with a sequence in
\[
    X \times X = [L^2(\Omega; \R^m)]^2 \simeq L^2(\Omega; \R^{2m}).
\]
To derive an expression for $\frechetCod \subdiff G$, it therefore suffices to prove the following result.

\begin{proposition}
    \label{prop:pointwise-normal-cone}
    Let $U \subset L^2(\Omega; \R^m)$ have the form
    \[
        U = \left\{ u \in L^2(\Omega; \R^m)
            \mid u(x) \in C(x) \text{ for a.\,e. } x \in \Omega
        \right\}
    \]
    for a Borel-measurable mapping $C: \Omega \setto \R^m$ with $C(x) \subset \R^m$ geometrically derivable for almost every $x \in \Omega$.
    Then for every $u \in U$ we have
    \begin{equation}
        \label{eq:l2-pointwise-normal-cone}
        \widehat N(u; U) = \left\{ z \in L^2(\Omega; \R^m)
                             \mid z(x) \in \widehat N(u(x); C(x)) \text{ for a.\,e. } 
                                  x \in \Omega
                              \right\},
    \end{equation}
    and
    \begin{equation}
        \label{eq:l2-pointwise-tangent-cone}
        T(u; U) = \left\{ z \in L^2(\Omega; \R^m)
                             \mid z(x) \in T(u(x); C(x)) \text{ for a.\,e. } 
                                  x \in \Omega
                              \right\}.
    \end{equation}
\end{proposition}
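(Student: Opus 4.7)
The plan is to prove the tangent cone identity (\ref{eq:l2-pointwise-tangent-cone}) first and then establish the normal cone identity (\ref{eq:l2-pointwise-normal-cone}) by a direct argument from the definition of $\widehat N$. I would split the proofs in this way because, for non-convex $C(x)$, the Fréchet normal cone is in general only the polar of the \emph{Clarke} (regular) tangent cone rather than of the contingent cone $T$ used in Definition~\ref{def:geomderiv}, so one cannot simply pass to polars in $L^2$.

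For (\ref{eq:l2-pointwise-tangent-cone}), the inclusion $\subset$ is routine: given $z \in T(u;U)$ realized by $\tau^i \downto 0$ and $u^i \in U$ with $(u^i-u)/\tau^i \to z$ in $L^2$, extract a pointwise-a.e.\ convergent subsequence, which exhibits $z(x)$ as a tangent vector to $C(x)$ at $u(x)$ for a.e.\ $x$. The converse is the main technical step. Given $z \in L^2$ with $z(x) \in T(u(x);C(x))$ a.e., fix any $\tau^i \downto 0$. By the full-limit characterization of $T$ for geometrically derivable sets remarked after Definition~\ref{def:geomderiv},
\[
\alpha_i(x) \defeq \inf_{v \in C(x)} \bignorm{v - u(x) - \tau^i z(x)}
\]
satisfies $\alpha_i(x)/\tau^i \to 0$ a.e., and is measurable because $C$ is Borel-measurable. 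The set-valued map
\[
\Phi_i(x) \defeq \bigl\{v \in C(x) : \|v - u(x) - \tau^i z(x)\| \le \alpha_i(x) + \tau^i |z(x)|/i\bigr\}
\]
has nonempty closed values and is measurable, so Kuratowski--Ryll-Nardzewski yields a measurable selection $u^i(x) \in \Phi_i(x)$. Since $\alpha_i(x) \le \tau^i \|z(x)\|$ (the choice $v = u(x) \in C(x)$ is admissible), one obtains the pointwise bound $\|(u^i(x)-u(x))/\tau^i - z(x)\| \le \alpha_i(x)/\tau^i + |z(x)|/i \le (1+1/i)|z(x)|$, so the integrand converges to $0$ pointwise a.e.\ and is dominated by $4|z|^2 \in L^1(\Omega)$; dominated convergence then gives $(u^i - u)/\tau^i \to z$ in $L^2$. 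This measurable-selection-plus-dominated-convergence step, with quantitative control of the selection, is what I expect to be the main obstacle.

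For (\ref{eq:l2-pointwise-normal-cone}), I would reason directly from the definition of $\widehat N$. Inclusion $\supset$: given $z$ with $z(x) \in \widehat N(u(x);C(x))$ a.e.\ and any sequence $U \ni u^j \to u$ in $L^2$, extract a pointwise-a.e.\ convergent subsequence and apply Egorov to split $\Omega$ into a set of uniform convergence plus an exceptional set of small measure; the pointwise $\limsup \le 0$ on the former and Cauchy--Schwarz on the latter yield $\iprod{z}{u^j-u}_{L^2}/\|u^j-u\|_{L^2} \to 0$. Inclusion $\subset$: by contradiction. If $z(x) \notin \widehat N(u(x);C(x))$ on a set $E$ of positive measure, by passing to a subset on which the failure is witnessed uniformly (via countably many witnesses and measurable selection), obtain a constant $c > 0$ and perturbations $u^i(x) \in C(x)$ on $E$ with $\|u^i(x) - u(x)\| = \delta_i \downto 0$ and $\iprod{z(x)}{u^i(x)-u(x)} \ge c \delta_i$; extending by $u$ off $E$ gives $u^i \in U$ with $\iprod{z}{u^i - u}_{L^2}/\|u^i - u\|_{L^2} \ge c\sqrt{\mu(E)}$, contradicting $z \in \widehat N(u;U)$. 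Securing a uniformly small amplitude $\delta_i$ on a positive-measure subset is the analogous technical obstacle for this direction, and again hinges on measurable selection.
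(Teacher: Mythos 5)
Your treatment of the tangent-cone identity \eqref{eq:l2-pointwise-tangent-cone} and of the inclusion $\supset$ in \eqref{eq:l2-pointwise-normal-cone} is sound and in places cleaner than the paper's: the metric reformulation $d(u(x)+\tau z(x),C(x))/\tau\to 0$ of geometric derivability combined with a measurable selection and dominated convergence replaces the paper's explicit curve construction with the sets $E_{c,t}$, and your Cauchy--Schwarz plus dominated-convergence splitting for $\supset$ replaces the paper's Chebyshev-type sets $Z_1^i$ and reverse Fatou argument. (Minor caveat: Kuratowski--Ryll-Nardzewski wants closed values, so either pass to $\closure C(x)$ or keep the positive slack in $\Phi_i$ even when $z(x)=0$.)

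The genuine gap is in the inclusion $\subset$ of \eqref{eq:l2-pointwise-normal-cone}. Your contradiction argument needs perturbations $u^i(x)\in C(x)$ of a \emph{common} amplitude $\delta_i$ (or amplitudes comparable within fixed factors) on a \emph{fixed} positive-measure set. But the failure of $z(x)\in\widehat N(u(x);C(x))$ only provides, for each $x$, some sequence of scales accumulating at $0$ at which the defect is witnessed, and these scales need not be synchronized across $x$: a pigeonhole over dyadic shells produces, for each $i$, a subset $E_i\subset E$ of positive but possibly vanishing measure, whence the lower bound degenerates to $\iprod{z}{u^i-u}/\norm{u^i-u}\ge c'\sqrt{\L^d(E_i)}\to 0$; letting the amplitude vary as $\rho_i(x)$ instead, the ratio $\norm{\rho_i}_{L^1}/\norm{\rho_i}_{L^2}$ can likewise collapse by concentration. "Countably many witnesses and measurable selection" does not resolve this synchronization problem, and tellingly your argument never invokes geometric derivability in this direction, which the paper explicitly identifies as the one place in the normal-cone proof where it is needed. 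Note also that your stated reason for avoiding polarity rests on a misconception: in finite dimensions the regular normal cone is \emph{always} the polar of the contingent cone $T$ (\cite[Thm.~6.28]{Rockafellar:1998}), for arbitrary sets; it is the reverse polarity $T=\polar{\widehat N}$ that requires Clarke regularity. The paper's route uses exactly this polarity to extract a unit tangent $w(x)\in T(u(x);C(x))$ with $\iprod{z(x)}{w(x)}\ge\delta$ on a positive-measure set, and then the derivability curves $\xi(\freevar,x)$, uniformized by Egorov, supply admissible perturbations $\xi(t,x)$ of amplitude comparable to $t$ for \emph{every} $t\in(0,\eps]$ simultaneously for a.\,e.~$x$ in that set; this is what synchronizes the scales and makes the contradiction go through. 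You would need to import that mechanism (or an equivalent uniformization) to close your argument.
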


\begin{proof}
    We start with \eqref{eq:l2-pointwise-normal-cone}.
    Recalling the definition of $\widehat N(u; U)$, we have to find all $z \in L^2(\Omega; \R^m)$ satisfying
    \[
        \limsup_{U \ni \alt{u} \to u}
            \frac{\iprod{z}{\alt{u}-u}}{\norm{\alt{u}-u}} \le 0,
    \]
    where the inner product and norm are now in $L^2(\Omega; \R^m)$, and
    the convergence is strong convergence in this space, within the subset $U$.

    Let us take a sequence $u^i \to u$, ($i=1,2,3,\ldots$) and let $\eps > 0$ be arbitrary. We denote
        $v^i \defeq u^i-u$.
    Then,
    \[
        L_i \defeq \frac{\iprod{z}{u^i-u}}{\norm{u^i-u}}
        =
        \frac{\int_\Omega \iprod{z(x)}{v^i(x)} \,d x}
             {\left(\int_\Omega \abs{v^i(x)}^2 \,d x\right)^{1/2}}.
    \]
    We let $Z_1=Z_1^i$, $Z_2$ and $Z_3$ be sets with Lebesgue measure $\L^m(\Omega \setminus Z_j) \le \eps/3$ for each $j=1,2,3$ and satisfying, respectively, the conditions 
    \begin{align}
        \label{eq:pointwise-norm-estimate}
        &\vnorm{v^i(x)} \le 3\inv\eps \norm{v^i}
        \quad
        (x \in Z_1^i),
        \\
        \label{eq:pointwise-boundedness}
        &z \text{ is bounded on } Z_2,\\
        \intertext{and}
        \label{eq:pointwise-remainder-integral}
        &\left(\int_{\Omega \setminus Z_3} \vnorm{z(x)}^2 \,d x\right)^{1/2}
        \le \eps/3.
    \end{align}
    To see how \eqref{eq:pointwise-norm-estimate} can hold, we take $\tilde Z_1^i$ as the set of $x \in \Omega$ satisfying \eqref{eq:pointwise-norm-estimate}. Then
    \[
        \norm{v^i}^2
        \ge \int_{\Omega \setminus \tilde Z_1^i} \vnorm{v^i(x)}^2 \,d x
        \ge 3\inv\eps \L^m(\Omega \setminus \tilde Z_1^i) \norm{v^i}^2,
    \]
    which gives
    \[
        \eps/3 \ge \L^m(\Omega \setminus \tilde Z_1^i).
    \]
    We may therefore take $Z_1^i = \tilde Z_1^i$.
    The proofs that \eqref{eq:pointwise-boundedness} and \eqref{eq:pointwise-remainder-integral} can hold are similarly elementary.

    With
    \[
        Z^i \defeq Z_1^i \isect Z_2 \isect Z_3,
    \]
    we have
    \[
        \L^m(\Omega \setminus Z^i) \le \eps.
    \]
    We calculate
    \[
        \begin{aligned}
            L_i
            &
            = \frac{\int_{\Omega \setminus Z^i} \iprod{z(x)}{v^i(x)} \,d x}{\norm{v^i}}
              +\frac{\int_{Z^i} \iprod{z(x)}{v^i(x)} \,d x}{\norm{v^i}}
            \\
            &
            \le \frac{\norm{\chi_{\Omega \setminus Z^i}z} \norm{v^i}}{\norm{v^i}}
                + \int_{Z^i} \frac{\iprod{z(x)}{v^i(x)}}{\vnorm{v^i(x)}} 
                   \cdot \frac{\abs{v^i(x)}}{\norm{v^i}} \,d x
            \\
            &
            \le \norm{\chi_{\Omega \setminus Z^i}z}
                + 3\inv\eps \int_{Z^i} \max\left\{0, \frac{\iprod{z(x)}{v^i(x)}}{\vnorm{v^i(x)}}\right\} \,d x.
            \\
            &
            \le \norm{\chi_{\Omega \setminus Z^i}}\norm{z}
                + 3\inv\eps \int_{Z_2} \max\left\{0, \frac{\iprod{z(x)}{v^i(x)}}{\vnorm{v^i(x)}}\right\} \,d x.
            \\
            &
            \le \eps^{1/2}\norm{z}
                + 3\inv\eps \int_{Z_2} \max\left\{0, \frac{\iprod{z(x)}{v^i(x)}}{\vnorm{v^i(x)}}\right\} \,d x.
        \end{aligned}
    \]
    If now for almost every $x \in \Omega$ we have that $z(x) \in \widehat N(u(x); C(x))$, then we deduce using the boundedness of $z$ on $Z_2$ and reverse Fatou's inequality that
    \[
        \limsup_{i \to \infty} L_i \le \eps^{1/2} \norm{z}.
    \]
    Since $\eps>0$ was arbitrary, we deduce
    \[
        \widehat N(u; U) \supset \{ z \in L^2(\Omega; \R^m)
                             \mid z(x) \in \widehat N(u(x); C(x)) \text{ for a.\,e. } 
                                  x \in \Omega
                             \}.
    \]
    This proves one direction of \eqref{eq:l2-pointwise-normal-cone}, which therefore holds even without geometric derivability. Now we have to prove the other direction, where we do need this assumption.

    So, let $z \in \widehat N(u; U)$. We have to show that $z(x) \in \widehat N(u(x); C(x))$ for a.\,e.~$x \in \Omega$. Suppose this does not hold. Using the standard polarity relationship $\widehat N(u(x); C(x))=\polar{[T(u(x); C(x))]}$, e.g., from \cite[Thm.~6.28]{Rockafellar:1998}, we can find $\delta > 0$ and a Borel set $E \subset \Omega$ of finite positive Lebesgue measure such that for each $x \in E$ there exists $w(x) \in T(u(x); C(x))$ with $\norm{w(x)}=1$ and $\iprod{z(x)}{w(x)} \ge \delta$. 
    We may without loss of generality take $C(x)$ geometrically derivable for each $x \in E$. By \cref{def:geomderiv} there then exists for each $x \in E$ a curve $\xi(\freevar, x): [0, \eps(x)] \to C(x)$ such that $\xi'_+(0, x)=w(x)$ and $\xi(0, x)=u(x)$. Let us pick $c \in (0, \delta)$. By replacing $E$ by a subset of positive measure, we may by Egorov's theorem assume the existence of $\eps > 0$ such that
    \[
        \vnorm{\xi(t, x)-\xi(0, x)-w(x)t} \le c t
        \quad (t \in [0, \eps],\, x \in E).
    \]

    Let us define
    \[
        \tilde u^t(x) \defeq \begin{cases}
                        \xi(t, x), & x \in E, \\
                        u(x), & x \in \Omega \setminus E.
                    \end{cases}
    \]
    With $v^t \defeq \tilde u^t -u$, we have $v^t(x)=\xi(t, x)-\xi(0, x)$ for $x \in E$, and $v^t(x)=0$ for $x \in \Omega \setminus E$. Therefore, for $t \in (0, \eps]$ and some $c'>0$ there holds
    \[
        \norm{v^t}
        = \left(\int_E \vnorm{\xi(t, x)-\xi(0, x)}^2\, d x\right)^{1/2}
        \le  \left(\int_E (\vnorm{w(x)}t+ ct)^2\, d x\right)^{1/2}
        \le c't.
    \]
    Likewise
    \[
        \iprod{z(x)}{v^t(x)}
        \ge \iprod{z(x)}{w(x)}t - \vnorm{z(x)} \cdot \vnorm{\xi(t, x)-\xi(0, x)-wt}
        \ge \delta t - ct.
    \]
    It follows
    \begin{equation}
        \limsup_{t \downto 0} \int_{E} \frac{\iprod{z(x)}{v^t(x)}}{\norm{v^t}} \,d x
        \ge
        \limsup_{t \downto 0} 
        \frac{\L^m(E)(\delta t - c t)}{c't}
        = \frac{\L^m(E)(\delta-c)}{c'} > 0.
    \end{equation}
    With $u^i \defeq \tilde u^{1/i}$ for $i \in \N$, it follows that $\lim_{i \to \infty} L_i > 0$.
    This provides a contradiction to $z \in \widehat N(u; U)$. Thus $z(x) \in \widehat N(u(x); C(x))$ for a.\,e.~$x \in \Omega$, finishing the proof of \eqref{eq:l2-pointwise-normal-cone}.

    \bigskip

    We still have to show \eqref{eq:l2-pointwise-tangent-cone}. The inclusion
    \[
        T(u; U) \subset \left\{ z \in L^2(\Omega; \R^m)
                             \mid z(x) \in T(u(x); C(x)) \text{ for a.\,e. } 
                                  x \in \Omega
                              \right\}
    \]
    follows from the defining equation \eqref{eq:tangent} and the fact that a sequence convergent in $L^2(\Omega)$ converges, after possibly passing to a subsequence, pointwise almost everywhere. 

    For the other direction, we take for almost every $x \in \Omega$ a tangent vector $z(x) \in T(u(x); C(x))$ at $u(x) \in C(x)$. For the inclusion \eqref{eq:l2-pointwise-tangent-cone}, we only need to consider the case $z \in L^2(\Omega; \R^m)$. By geometric derivability, we may find for a.\,e. $x \in \Omega$ an $\eps(x)>0$ and a curve $\xi(\freevar, x): [0, \eps(x)] \to C(x)$ such that $\xi(0, x)=u(x)$ and $\xi'_+(0, x)=z(x)$. In particular, for any given $c>0$, we may find $\eps_c(x) \in (0, \eps(x)]$ such that
    \begin{equation}
        \label{eq:tangent-xi}
        \frac{\vnorm{\xi(t, x)-\xi(0, x)-z(x)t}}{t} \le c
        \quad (t \in (0, \eps_c(x)],\, \text{a.\,e. } x \in \Omega).
    \end{equation}
    For $t>0$, let us set
    \[
        E_{c,t} \defeq \{ x \in \Omega \mid \eps_c(x)\geq t \}.
    \]
    If we define
    \[
        \tilde u^{c,t}(x) \defeq 
        \begin{cases}
            \xi(t, x), & x \in E_{c,t}, \\
            u(x), & x \in \Omega \setminus E_{c,t},
        \end{cases}
    \]
    then by \eqref{eq:tangent-xi}, $\vnorm{\tilde u^{c,t}(x)-u(x)} \le t (c+\vnorm{z(x)})$ for a.\,e.~$x \in \Omega$, so that
    \begin{equation}
        \label{eq:tangent-approx}
        \norm{\tilde u^{c,t}-u} \le \left(\int_\Omega t^2 (c+\vnorm{z(x)})^2\right)^{1/2}
        \le t (c\sqrt{\L^m(\Omega)}+\norm{z}).
    \end{equation}
    Moreover, \eqref{eq:tangent-xi} also gives
    \begin{equation}
        \label{eq:tangent-construct}
        \begin{aligned}[t]
        \frac{\norm{\tilde u^{c,t}-u-z t}}{t}
        &
        \le \frac{1}{t}
         \left(\int_{E_{c,t}} \vnorm{\xi(t,x)-\xi(0,x)-z(x)t}^2 \, d x\right)^{1/2}
         \!+
            \frac{1}{t}
            \left(\int_{\Omega \setminus E_{c,t}} \vnorm{z(x)t}^2 \, d x\right)^{1/2}
        \\
        &
        \le c\sqrt{\L^m(\Omega)} + \norm{z\chi_{\Omega \setminus E_{c,t}}}.
        \end{aligned}
    \end{equation}
    For each $i \in \N$ we can find $t_i>0$ such that $\norm{z\chi_{\Omega \setminus E_{1/i,t_i}}} \le 1/i$. This follows from Lebesgue's dominated convergence theorem and the fact that $\L^m(\Omega \setminus E_{c,t}) \to 0$ as $t \to 0$.
    The estimates \eqref{eq:tangent-approx} and \eqref{eq:tangent-construct} thus show that $u^i \defeq u^{1/i,t_i}$ satisfy $u^i \to u$ and $(u^i-u)/t_i \to z$. Therefore $z \in T(u; U)$, finishing the proof of \eqref{eq:l2-pointwise-tangent-cone}.
\end{proof}

As a corollary, we may calculate $\frechetCod \subdiff G(u|w)$ for $G$ of the form \eqref{eq:g-pointwise-integral}.

\begin{corollary}
    \label{corollary:g-form}
    Let $G: L^2(\Omega; \R^m) \to \R$ have the form \eqref{eq:g-pointwise-integral} for some regular integrand $g$.
    Then the regular coderivative of $\subdiff G$ at $u$ for $\coPrimal$ in the direction $\dir\coPrimal$, where $u,\coPrimal,\dir\coPrimal \in L^2(\Omega; \R^m)$, is given by
    \begin{equation}
        \label{eq:g-form}
        \frechetCod{[\subdiff G]}(u|\coPrimal)(\dir\coPrimal)
        =
        \left\{
            \dir{u} \in L^2(\Omega; \R^m)
            \,\middle|\,
            \begin{array}{r}
            \dir{u}(x) \in \frechetCod{[\subdiff g(x, \freevar)]}(u(x)|\coPrimal(x))(\dir\coPrimal(x)) \\
            \text{ for a.\,e. } x \in \Omega
            \end{array}
        \right\}.
    \end{equation}
    Likewise, the graphical derivative at $u$ for $\coPrimal$ in the directon $\dir u$ is given by 
    \begin{equation}
        \label{eq:g-form-graph}
        D{[\subdiff G]}(u|\coPrimal)(\dir u)
        =
        \left\{
            \dir{\coPrimal} \in L^2(\Omega; \R^m)
            \,\middle|\,
            \begin{array}{r}
            \dir{\coPrimal}(x) \in D{[\subdiff g(x, \freevar)]}(u(x)|\coPrimal(x))(\dir u(x)) \\
            \text{ for a.\,e. } x \in \Omega
            \end{array}
        \right\}.
    \end{equation}
\end{corollary}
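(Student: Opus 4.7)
The plan is to reduce this to a direct application of \cref{prop:pointwise-normal-cone} by identifying $\graph \subdiff G$ with a set of $L^2(\Omega;\R^{2m})$-functions constrained pointwise to lie in $\graph \subdiff g(x,\freevar)$, and then to translate the resulting characterizations of normal and tangent cones into the language of regular coderivatives and graphical derivatives via \cref{def:frechetcod}.

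First I would use the pointwise description \eqref{eq:g-pointwise-subdiff} of $\subdiff G$ together with the natural identification $X \times X \simeq L^2(\Omega;\R^{2m})$ to write
\[
    \graph \subdiff G
    = \bigl\{(u,\coPrimal) \in L^2(\Omega;\R^{2m}) \bigm| (u(x), \coPrimal(x)) \in C(x) \text{ for a.\,e.\ } x\in\Omega\bigr\},
\]
with $C(x) \defeq \graph \subdiff g(x,\freevar) \subset \R^{2m}$. I would verify that this $C$ meets the hypotheses of \cref{prop:pointwise-normal-cone}: Borel-measurability of $C$ follows from \cite[Prop.~14.56]{Rockafellar:1976}, which applies because $g$ is normal and convex, and the geometric derivability of $C(x)$ for a.\,e.\ $x$ is precisely the pointwise proto-differentiability assumption (iii) on $\subdiff g$, unpacked via the definition of proto-differentiability at the end of \cref{sec:pointwise_deriv}.

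With these hypotheses in place, \cref{prop:pointwise-normal-cone} applied to the set $U = \graph \subdiff G$ inside $L^2(\Omega;\R^{2m})$ yields
\[
    \widehat N\bigl((u,\coPrimal); \graph \subdiff G\bigr)
    = \bigl\{ (a,b) \in L^2(\Omega;\R^{2m}) \bigm| (a(x), b(x)) \in \widehat N\bigl((u(x),\coPrimal(x)); \graph \subdiff g(x,\freevar)\bigr) \text{ a.\,e.}\bigr\},
\]
and the analogous equality for the tangent cone $T((u,\coPrimal); \graph \subdiff G)$. Specializing $(a,b) = (\dir u, -\dir \coPrimal)$ in \eqref{eq:frechetcod} and $(a,b) = (\dir u, \dir \coPrimal)$ in \eqref{eq:graphderiv-0} reduces the two claimed identities \eqref{eq:g-form} and \eqref{eq:g-form-graph} to applying the same definitions of $\frechetCod{[\subdiff g(x,\freevar)]}$ and $D{[\subdiff g(x,\freevar)]}$ in $\R^m$ pointwise.

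I do not expect any real obstacle here: the heavy lifting happens in \cref{prop:pointwise-normal-cone}. The one point that merits a sentence of justification is the measurability of the selections appearing on the right-hand sides of \eqref{eq:g-form} and \eqref{eq:g-form-graph}, so that the condition ``$\dir u(x) \in \cdots$ for a.\,e.\ $x$'' is meaningful; this is inherited from the measurability of $u$, $\coPrimal$, $\dir \coPrimal$ (resp.\ $\dir u$) together with the joint measurability of $\graph \subdiff g$ and the fact that the normal and tangent cone operations preserve measurability on measurable set-valued maps.
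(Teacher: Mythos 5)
Your proposal is correct and follows essentially the same route as the paper: both identify $\graph\subdiff G$ with the pointwise-constrained set given by $C(x)=\graph\subdiff g(x,\freevar)$ in $L^2(\Omega;\R^{2m})$, invoke the proto-differentiability of $\subdiff g(x,\freevar)$ (i.e., geometric derivability of $C(x)$) to apply \cref{prop:pointwise-normal-cone}, and then read off \eqref{eq:g-form} and \eqref{eq:g-form-graph} from \cref{def:frechetcod}. The extra remarks on measurability are a welcome but minor addition that the paper leaves implicit.
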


\begin{proof}
    As we have already remarked in the beginning of the present \cref{sec:second-deriv}, the sets
    \[
        C(x) \defeq \{ (\FdVar, \coPrimal) \in \R^m \times \R^m \mid \coPrimal \in \subdiff g(x, \FdVar) \}
    \]
    are by \cite[Prop.~8.41, Ex.~13.30, Thm.~13.40]{Rockafellar:1998} geometrically derivable for regular integrands $g$. The present result therefore follows by direct application of \cref{prop:pointwise-normal-cone} to the set $U=\graph \subdiff G$ with $\subdiff G$ given in \eqref{eq:g-pointwise-subdiff}.
\end{proof}

More generally, we have the following.
\begin{corollary}
    \label{corollary:p-form}
    Let $P: Q \sim L^2(\Omega; \R^m) \setto W \sim L^2(\Omega; \R^k)$ have the form
    \[
        P(q) = \{ w \in L^2(\Omega; \R^m) \mid w(x) \in p(x, q(x))\, \text{for a.\,e. }  x \in \Omega\}
    \]
    for some Borel-measurable and pointwise a.\,e.~proto-differentiable set-valued function $p: \Omega \times \R^m \setto \R^k$.
    Then the regular coderivative of $P$ at $q$ for $w$ in the direction $\dir{w}$, where $q \in L^2(\Omega; \R^m)$, and $w,\dir{w} \in L^2(\Omega; \R^k)$, is given by
    \[
        \frechetCod{P}(q|w)(\dir{w})
        =
        \left\{
            \dir{q} \in L^2(\Omega; \R^m)
            \,\middle|\,
            \begin{array}{r}
            \dir{q}(x) \in \frechetCod{[p(x, \freevar)]}(q(x)|w(x))(\dir{w}(x)), \\
            \text{ for a.\,e. } x \in \Omega
            \end{array}
        \right\}.
    \]
    Likewise,
    \[
        D{P}(q|w)(\dir{q})
        =
        \left\{
            \dir{w} \in L^2(\Omega; \R^m)
            \,\middle|\,
            \begin{array}{r}
            \dir{w}(x) \in D{[p(x, \freevar)]}(q(x)|w(x))(\dir{q}(x)), \\
            \text{ for a.\,e. } x \in \Omega
            \end{array}
        \right\}.
    \]
\end{corollary}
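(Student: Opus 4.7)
The plan is to reduce \cref{corollary:p-form} to a direct application of \cref{prop:pointwise-normal-cone}, essentially mirroring the proof of \cref{corollary:g-form} but starting from the set-valued mapping $p$ itself rather than from the subdifferential of a regular integrand.

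First, I would identify the product $Q \times W \simeq L^2(\Omega; \R^m) \times L^2(\Omega; \R^k) \simeq L^2(\Omega; \R^{m+k})$ and set
\[
    C(x) \defeq \graph p(x, \freevar) = \{(q, w) \in \R^m \times \R^k \mid w \in p(x, q)\},
\]
so that
\[
    \graph P = \{u \in L^2(\Omega; \R^{m+k}) \mid u(x) \in C(x) \text{ for a.e. } x \in \Omega\}.
\]
The Borel measurability of $x \mapsto C(x)$ is immediate from the assumed Borel measurability of $p$, and the hypothesis that $p$ is pointwise a.e.~proto-differentiable states by definition that $C(x)$ is geometrically derivable for almost every $x \in \Omega$. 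Thus the hypotheses of \cref{prop:pointwise-normal-cone} are satisfied, modulo only the trivial observation that its proof is dimension-agnostic and goes through verbatim with $m$ replaced by $m+k$.

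Applying \cref{prop:pointwise-normal-cone} at the point $u = (q, w) \in \graph P$ then yields the pointwise characterizations
\[
    \widehat N((q,w); \graph P) = \{(a, b) \in L^2(\Omega;\R^{m+k}) \mid (a(x), b(x)) \in \widehat N((q(x), w(x)); C(x)) \text{ a.e.}\},
\]
and analogously for $T((q,w); \graph P)$. The claimed formulas for $\frechetCod P$ and $DP$ then fall out of \cref{def:frechetcod}: rewriting $(a, b) = (\dir q, -\dir w)$ translates the pointwise inclusion $(a(x), b(x)) \in \widehat N((q(x), w(x)); \graph p(x, \freevar))$ into $\dir q(x) \in \frechetCod{[p(x,\freevar)]}(q(x)|w(x))(\dir w(x))$, and the corresponding use of $T$ together with $(\dir q, \dir w)$ produces the graphical-derivative formula.

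No step of this plan presents a real obstacle, since the technical substance of the argument---the measure-theoretic pointwise-to-global construction together with the Egorov--reverse-Fatou estimate---was already carried out in \cref{prop:pointwise-normal-cone}. The only items requiring any care are the bookkeeping of the sign convention in decomposing the regular normal vector as $(\dir q, -\dir w)$ and verifying that the Lebesgue-measure arguments of that proposition are genuinely insensitive to the codomain dimension, both of which are routine.
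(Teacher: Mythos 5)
Your proposal is correct and is essentially the paper's own (implicit) argument: the paper proves \cref{corollary:g-form} by applying \cref{prop:pointwise-normal-cone} to $U=\graph \subdiff G$ with $C(x)=\graph \subdiff g(x,\freevar)$, and \cref{corollary:p-form} is obtained in exactly the same way with $C(x)=\graph p(x,\freevar)$, whose geometric derivability is now a direct hypothesis (proto-differentiability) rather than a consequence of twice epi-differentiability. Your bookkeeping of the sign $(\dir{q},-\dir{w})$ and the observation that \cref{prop:pointwise-normal-cone} already applies in any finite codomain dimension are both accurate.
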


\begin{corollary}
    \label{cor:cod-p-plus-smooth}
    Let $P$ be a pointwise set-valued functional as in \cref{corollary:p-form}, and let $h: Q \to W$ be single-valued and Fréchet differentiable. Then 
    \begin{equation}
        \label{eq:d-p-plus-smooth-cod}
        \frechetCod{(P+h)}(q|w)(\dir{w})
        = \frechetCod{P}(q|w-h(q))(\dir{w}) + [\grad h(q)]^*\dir{w},
    \end{equation}
    and
    \begin{equation}
        \label{eq:d-p-plus-smooth}
        D{(P+h)}(q|w)(\dir{q})
        = DP(q|w-h(q))(\dir{q}) + \grad h(q)\dir{q}.
    \end{equation}
\end{corollary}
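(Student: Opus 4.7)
The plan is to observe that the graphs of $P+h$ and $P$ are related by a smooth shear and to pull back the definitions of the tangent and regular normal cones through the corresponding diffeomorphism. Concretely, define $\Phi: Q \times W \to Q \times W$ by $\Phi(q, w) \defeq (q, w - h(q))$. Since $h$ is Fréchet differentiable, $\Phi$ is a $C^1$-diffeomorphism with inverse $(q, w) \mapsto (q, w + h(q))$, and from the identity $w \in (P+h)(q)$ iff $w - h(q) \in P(q)$ one sees that $\graph(P+h) = \Phi^{-1}(\graph P)$. The Fréchet derivative and its Hilbert-space adjoint are immediate:
\begin{align*}
    \grad\Phi(q,w)(\dir q, \dir w) &= (\dir q,\ \dir w - \grad h(q)\dir q),\\
    \grad\Phi(q,w)^*(z_1, z_2) &= (z_1 - [\grad h(q)]^* z_2,\ z_2).
\end{align*}

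Next I would invoke the standard pullback rules for tangent vectors and regular normals under a $C^1$-diffeomorphism: if $A = \Phi^{-1}(B)$ and $y = \Phi(x)$, then
\[
    T(x; A) = \grad\Phi(x)^{-1} T(y; B), \qquad \widehat N(x; A) = \grad\Phi(x)^* \widehat N(y; B).
\]
The tangent identity follows directly from \cref{def:geomderiv} by post-composing a curve in $A$ with $\Phi$ (and one in $B$ with $\Phi^{-1}$). The normal identity follows from the definition of $\widehat N$ by substituting $x'-x = \grad\Phi(x)^{-1}(y'-y) + o(\norm{y'-y})$ from the inverse function theorem, observing that $\norm{x'-x}$ and $\norm{y'-y}$ are equivalent up to bounded factors, and passing the limit through the dual pairing. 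Neither of these preliminaries uses the pointwise structure of $P$ or any geometric derivability of $\graph P$ itself.

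The conclusion is then bookkeeping through \cref{def:frechetcod}. For \eqref{eq:d-p-plus-smooth}, $\dir w \in D(P+h)(q|w)(\dir q)$ means $(\dir q, \dir w) \in T((q,w); \graph(P+h))$, equivalently $(\dir q, \dir w - \grad h(q)\dir q) \in T((q, w-h(q)); \graph P)$, i.e.\ $\dir w - \grad h(q)\dir q \in DP(q|w-h(q))(\dir q)$. For \eqref{eq:d-p-plus-smooth-cod}, $\dir q \in \frechetCod(P+h)(q|w)(\dir w)$ means $(\dir q, -\dir w) \in \widehat N((q,w); \graph(P+h))$; using the adjoint formula above, this is equivalent to the existence of $(a, b) \in \widehat N((q, w-h(q)); \graph P)$ with $(a - [\grad h(q)]^* b,\, b) = (\dir q, -\dir w)$, forcing $b = -\dir w$ and $a = \dir q - [\grad h(q)]^*\dir w$, which unfolds precisely to $\dir q - [\grad h(q)]^*\dir w \in \frechetCod P(q|w-h(q))(\dir w)$.

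The only step requiring some care is the sign convention in \cref{def:frechetcod} when translating the adjoint identity for $\widehat N$ into a formula for $\frechetCod$; beyond this, no genuine obstacle is anticipated, and the hypothesis that $P$ comes from \cref{corollary:p-form} is not actually used in the argument.
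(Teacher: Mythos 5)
Your proof is correct, but it takes a more self-contained route than the paper, which disposes of the corollary in two sentences: the graphical-derivative rule \eqref{eq:d-p-plus-smooth} is read off from the defining limit formula \eqref{eq:graphderiv} (as in the finite-dimensional \cite[Ex.~10.43]{Rockafellar:1998}), and the coderivative rule \eqref{eq:d-p-plus-smooth-cod} is quoted from the sum rule for regular coderivatives \cite[Thm.~1.62]{Mordukhovich:2006}. What you do instead is essentially re-prove that cited sum rule: you recognize $\graph(P+h)=\Phi^{-1}(\graph P)$ for the shear $\Phi(q,w)=(q,w-h(q))$ and pull back $T$ and $\widehat N$ through it; the adjoint bookkeeping at the end is right, including the sign handling in \cref{def:frechetcod}, and your observation that neither the pointwise structure of $P$ nor any derivability of $\graph P$ is needed is accurate (the paper's cited results are equally general). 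Two small touch-ups: $h$ is only assumed Fréchet differentiable, not $C^1$, so you should phrase $\Phi$ as a bijection that is Fréchet differentiable at the relevant point together with its explicit inverse $(q,w)\mapsto(q,w+h(q))$ — that is all the pullback identities require; and the tangent-cone identity should be derived from the sequence definition \eqref{eq:tangent} rather than from \cref{def:geomderiv}, since arguing via curves would implicitly assume derivability of the tangent vectors, which is neither hypothesized nor needed. The trade-off is clear: the paper's proof is shorter but leans on an external reference, while yours is longer but elementary and makes the mechanism (a linear shear of the graph) transparent.
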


\begin{proof}
    Similarly to the finite-dimensional case in \cite[Ex.~10.43]{Rockafellar:1998}, the rule \eqref{eq:d-p-plus-smooth} follows immediately from the defining equation \eqref{eq:graphderiv}.
    The rule \eqref{eq:d-p-plus-smooth-cod} is immediate consequence of the sum rule for regular coderivatives \cite[Thm.~1.62]{Mordukhovich:2006}.
\end{proof}

\begin{remark}
    Using \eqref{eq:l2-pointwise-normal-cone}, it is not difficult to obtain the characterization
    \begin{equation}
        \label{eq:pointwise-limiting-normal}
        N(u; U) = \left\{ z \in L^2(\Omega; \R^m)
                             \mid z(x) \in N(u(x); C(x)) \text{ for a.\,e. } 
                                  x \in \Omega
                              \right\}
    \end{equation}
    of the \term{limiting normal cone} $N(u; U) \defeq \limsup_{U \ni u' \to u} \widehat N(u'; U)$. The proof is based on $L^2$ convergence giving pointwise a.\,e.~convergence for a subsequence, and in the other direction, reindexing finite-dimensional sequences to get sequences convergent in $L^2$.
    The expression \eqref{eq:pointwise-limiting-normal} then allows obtaining corresponding versions of the corollaries above for the \term{limiting coderivative} $D^*$ (which enjoys a richer calculus) in place of the regular coderivative $\widehat D^*$.
    
    However, the stability analysis which is the focus of this work rests on the relation between the regular coderivative and the graphical derivative, discussed in the following section, which does not hold for the limiting coderivative (which has a similar relation with the regular derivative). In particular, we cannot work in the same way with the convexified graphical derivative which is a key step in our analysis (see \cref{sec:linear-cone} below).
    Hence, we do not treat this case in detail here.
\end{remark}

\subsection{The finite-dimensional coderivative in terms of the graphical derivative}
\label{sec:findimco}

\Cref{corollary:g-form} and \cref{corollary:p-form} give us computable expressions for the coderivative for pointwise set-valued mappings in infinite dimensions in terms of the \emph{co}derivative in finite dimensions. It is, however, often easier to work with the graphical derivative \eqref{eq:graphderiv}. From \cite[Prop.~8.37]{Rockafellar:1998} we find for $\somesetmapfindim: \R^m \setto \R^n$ that
\begin{equation}
    \label{eq:upper-adj-cod}
    \frechetCod \somesetmapfindim(q|w) = [D\somesetmapfindim(q|w)]^{*+}.
\end{equation}
Here, for a general set-valued mapping $\altsetmap: Q \setto W$, the \emph{upper adjoint} $\altsetmap^{*+}:W\setto Q$ is defined via
\[
    \altsetmap^{*+}(\dir{w})
    \defeq
    \{
        \dir{q} \in Q
        \mid
        \iprod{\dir{q}}{\dir{\alt{q}}} \le \iprod{\dir{w}}{\dir{\alt{w}}}
        \text{ when }
        \dir{\alt{w}} \in \altsetmap(\dir{\alt{q}})
    \}.
\]

In general, the graph of the regular coderivative need not be a convex set. It is often more convenient -- and for our analysis sufficient -- to work with its convexification.
To see this, observe first that by definition of the upper adjoint, and minding the negative sign of $\dir{\alt{w}}$ in \eqref{eq:frechetcod}, the relation \eqref{eq:upper-adj-cod} is equivalent to
\[
    \widehat N((q, w); {\graph \somesetmapfindim})=\polar{T((q, w); {\graph \somesetmapfindim})}.
\]
In particular -- simply through the definitions of polarity and convexity, -- the convex hull of the tangent cone satisfies
\[
    \widehat N((q, w); {\graph \somesetmapfindim})=\polar{[\conv T((q, w); {\graph \somesetmapfindim})]}.
\]
Defining $\widetilde{D \somesetmapfindim}(q|w)$ via
\[
    \graph \widetilde{D \somesetmapfindim}(q|w)
    =\conv \graph[D \somesetmapfindim(q|w)],
\]
we therefore deduce
\begin{equation}
    \label{eq:upper-adj-cod-tilde}
    \frechetCod \somesetmapfindim(q|w) = [D\somesetmapfindim(q|w)]^{*+} = [\widetilde{D\somesetmapfindim}(q|w)]^{*+},
\end{equation}
where the first equality holds due to the finite-dimensional setting, while the second equality holds generally due to the properties of convex hulls and polars.

The following central results shows that for the pointwise functionals that are the focus of this work, both equalities hold even in the infinite-dimensional setting.
\begin{theorem}
    \label{thm:p-graph-deriv}
    Let $P: Q \sim L^2(\Omega; \R^m) \setto W \sim L^2(\Omega; \R^k)$ have the form
    \[
        P(q) = \{ w \in L^2(\Omega; \R^m) \mid w(x) \in p(x, q(x))\, \text{for a.\,e. }  x \in \Omega\}.
    \]
    with proto-differentiable $p(x, \freevar): \R^m \setto \R^k$ (a.\,e.~$x \in \Omega$) 
    having locally closed graph at $(q(x),w(x))$ for a.\,e.~$x \in \Omega$.
    Then
    \[
        \frechetCod P(q|w) = [DP(q|w)]^{*+} = [\widetilde{DP}(q|w)]^{*+},
    \]
    i.e.,
    \[
        \begin{split}
        \dir{q} \in \frechetCod P(q|w)(\dir{w})
        &
        \iff
        \iprod{\dir{q}}{\dir{\alt{q}}} \le \iprod{\dir{w}}{\dir{\alt{w}}}
            \text{ when } \dir{\alt{w}} \in DP(q|w)(\dir{\alt{q}})
        \\
        &
        \iff
        \iprod{\dir{q}}{\dir{\alt{q}}} \le \iprod{\dir{w}}{\dir{\alt{w}}}
            \text{ when } \dir{\alt{w}} \in \widetilde{DP}(q|w)(\dir{\alt{q}}).
        \end{split}
    \]
  \end{theorem}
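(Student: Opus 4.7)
My plan is to pass to pointwise finite-dimensional statements via \cref{corollary:p-form} and the finite-dimensional identity \eqref{eq:upper-adj-cod}, and transfer back to the $L^2$ level by integration in one direction and by measurable selection in the other. The second equality $[DP(q|w)]^{*+} = [\widetilde{DP}(q|w)]^{*+}$ is essentially tautological: the condition $\iprod{\dir q}{\dir{\alt q}} \le \iprod{\dir w}{\dir{\alt w}}$ is affine in $(\dir{\alt q}, \dir{\alt w})$, so the subset of $L^2 \times L^2$ it describes is a closed half-space, and this half-space contains $\graph DP(q|w)$ if and only if it contains $\conv \graph DP(q|w) = \graph \widetilde{DP}(q|w)$.

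For the main equality $\frechetCod P(q|w) = [DP(q|w)]^{*+}$, the forward inclusion proceeds by integration. By \cref{corollary:p-form}, $\dir q \in \frechetCod P(q|w)(\dir w)$ is equivalent to $\dir q(x) \in \frechetCod p(x,\freevar)(q(x)|w(x))(\dir w(x))$ for a.e.\ $x$, which by the finite-dimensional identity \eqref{eq:upper-adj-cod} means pointwise $\iprod{\dir q(x)}{\dir{\alt q}(x)} \le \iprod{\dir w(x)}{\dir{\alt w}(x)}$ whenever $\dir{\alt w}(x) \in Dp(x,\freevar)(q(x)|w(x))(\dir{\alt q}(x))$. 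Given any $L^2$ pair with $\dir{\alt w} \in DP(q|w)(\dir{\alt q})$, \cref{corollary:p-form} supplies exactly this pointwise membership, and integrating (the pointwise products being $L^1$-dominated via Cauchy--Schwarz) yields the $L^2$-level inequality.

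The reverse inclusion is the main obstacle. Suppose $\dir q \in [DP(q|w)]^{*+}(\dir w)$ while the pointwise membership fails on a set $E$ of positive measure. On $E$, the pointwise form of \eqref{eq:upper-adj-cod} produces for each $x$ some pair $(\dir{\alt q}(x), \dir{\alt w}(x)) \in \graph Dp(x,\freevar)(q(x)|w(x))$ with strict violation $\iprod{\dir q(x)}{\dir{\alt q}(x)} > \iprod{\dir w(x)}{\dir{\alt w}(x)}$; I would obtain a Borel measurable pointwise selection of such pairs via an Aumann-type theorem, using that $x \mapsto \graph p(x,\freevar)$ is measurable by hypothesis and that the tangent-cone construction at measurably varying basepoints preserves measurability. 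Since $Dp(x,\freevar)(q(x)|w(x))$ is a cone, I may then rescale pointwise by a measurable positive factor to uniformly bound the selected vectors without changing the sign of the pointwise violation; restricting further to a subset of $E$ of finite measure and extending by zero elsewhere yields $L^2$ functions $\dir{\alt q}, \dir{\alt w}$ with $\dir{\alt w} \in DP(q|w)(\dir{\alt q})$ (again by \cref{corollary:p-form}), and integrating the strict pointwise inequality over that subset contradicts $\dir q \in [DP(q|w)]^{*+}(\dir w)$. The principal technical subtlety is this measurable-selection step: once measurability of $x \mapsto \graph Dp(x,\freevar)(q(x)|w(x))$ is established, the scaling and truncation that deliver the required $L^2$ regularity are routine.
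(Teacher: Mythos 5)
Your proposal is correct and follows essentially the same route as the paper: reduce to the pointwise finite-dimensional identity $\frechetCod p(x,\freevar) = [Dp(x,\freevar)]^{*+}$ via \cref{corollary:p-form}, integrate for the forward implication, and derive a contradiction from a positive-measure set of pointwise violations for the reverse, with the second equality following from the half-space/convex-hull observation already recorded in \eqref{eq:upper-adj-cod-tilde}. The only differences are minor: where you zero out the selected test pair off the violating set (so the integrated violation is immediate), the paper instead amplifies a given $L^2$ test pair by $(1+t\chi_E)$ for large $t$ to dominate the off-$E$ contribution, and your explicit Aumann-type measurable selection with cone rescaling to secure $L^2$ membership spells out a step the paper leaves implicit in its chain of pointwise equivalences.
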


\begin{proof}
    Let us define $p_x \defeq p(x, \freevar)$.
    From \cref{corollary:p-form}, we have the equivalence
    \[
        \dir{{w}}(x) \in Dp_x(q(x)|w(x))(\dir{{q}}(x))
        \text{ for a.\,e. } x \in \Omega
        \iff
        \dir{{w}} \in DP(q|w)(\dir{{q}}).
    \]
    Provided that $\graph p_x$ is locally closed for a.\,e.~$x \in \Omega$, we may thus calculate
    \begin{equation}
        \label{eq:somesetmap-upper-adjoint-transformation-first-step}
        \begin{aligned}[t]
        \dir{q} \in \frechetCod P(q|w)(\dir{w})
        &
        \iff
        \dir{q}(x) \in \frechetCod p_x(q(x)|w(x))(\dir{w}(x))
        \quad\text{for a.\,e. } x \in \Omega
        \\
        &
        \iff
        \iprod{\dir{q}(x)}{\dir{\alt{q}}(x)} \le \iprod{\dir{w}(x)}{\dir{\alt{w}}(x)}
        \quad\text{for a.\,e. } x \in \Omega
        \\
        &
        \phantom{\iff}
        \quad
        \text{when } \dir{\alt{w}}(x) \in Dp_x(q(x)|w(x))(\dir{\alt{q}}(x))
        \quad\!\text{for a.\,e. } x \in \Omega
        \\
        &
        \iff
        \iprod{\dir{q}(x)}{\dir{\alt{q}}(x)} \le \iprod{\dir{w}(x)}{\dir{\alt{w}}(x)}
        \quad\text{for a.\,e. } x \in \Omega
        \\
        &
        \phantom{\iff}
        \quad
        \text{when } \dir{\alt{w}} \in DP(q|w)(\dir{\alt{q}}).
        \end{aligned}
    \end{equation}
   Here we are still computing the upper adjoint pointwise. Clearly \eqref{eq:somesetmap-upper-adjoint-transformation-first-step} however implies
    \begin{equation}
        \label{eq:somesetmap-upper-adjoint-transformation-first-impl}
        \dir{q} \in \frechetCod P(q|w)(\dir{w})
        \implies
        \iprod{\dir{q}}{\dir{\alt{q}}} \le \iprod{\dir{w}}{\dir{\alt{w}}}
        \text{ when } \dir{\alt{w}} \in DP(q|w)(\dir{\alt{q}}).
    \end{equation}
    Further, if there exists a set $E \subset \Omega$ with $\L^m(\Omega \setminus E)>0$
    and
    \[
        \iprod{\dir{q}(x)}{\dir{\alt{q}}(x)} > \iprod{\dir{w}(x)}{\dir{\alt{w}}(x)}
        \qquad (x \in E),
    \]
    then by constructing
    \[
        \altalt{\dir{q}}(x) \defeq (1+t\chi_E(x))\dir{q}(x),
        \quad
        \altalt{\dir{w}}(x) \defeq (1+t\chi_E(x))\dir{w}(x),
    \]
    we observe for sufficient large $t$ the condition
    \[
        \iprod{\dir{q}}{\altalt{\dir{q}}} > \iprod{\dir{w}}{\altalt{\dir{w}}}.
    \]
    Moreover, by the pointwise character of $P$, also
    $\altalt{\dir{w}} \in DP(q|w)(\altalt{\dir{q}})$.
    Thus the implication in \eqref{eq:somesetmap-upper-adjoint-transformation-first-impl} actually holds both ways, which is exactly what we set out to prove.

    Finally, $[DP(q|w)]^{*+} = [\widetilde{DP}(q|w)]^{*+}$ always holds, as we have already observed in \eqref{eq:upper-adj-cod-tilde}.
\end{proof}

Similarly to \cref{cor:cod-p-plus-smooth}, we have the following immediate corollary.
\begin{corollary}
    \label{cor:d-p-plus-smooth}
    Let $P$ be a pointwise set-valued functional as in \cref{thm:p-graph-deriv}, and let $h: Q \to W$ be single-valued and Fréchet differentiable. Then 
    \begin{equation}
        \label{eq:p-plus-h-upper-adjoint}
        \frechetCod (P+h)(q|w) = [D(P+h)(q|w)]^{*+}.
    \end{equation}
\end{corollary}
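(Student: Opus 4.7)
The plan is to combine three ingredients: the sum rules from Corollary~\ref{cor:cod-p-plus-smooth}, the coderivative-as-upper-adjoint identity from Theorem~\ref{thm:p-graph-deriv}, and a standard algebraic formula for the upper adjoint of a sum with a bounded linear map. First I would invoke Corollary~\ref{cor:cod-p-plus-smooth} to write
\begin{align*}
\frechetCod(P+h)(q|w)(\dir{w}) &= \frechetCod P(q|w-h(q))(\dir{w}) + [\grad h(q)]^*\dir{w}, \\
D(P+h)(q|w)(\dir{q}) &= DP(q|w-h(q))(\dir{q}) + \grad h(q)\dir{q}.
\end{align*}
Then, applying Theorem~\ref{thm:p-graph-deriv} to $P$ at the base point $(q, w-h(q))$, I obtain
\[
\frechetCod P(q|w-h(q)) = [DP(q|w-h(q))]^{*+}.
\]

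The remaining ingredient is the purely algebraic identity $[J+A]^{*+}(\dir{w}) = J^{*+}(\dir{w}) + A^*\dir{w}$, valid for any set-valued $J\colon Q \setto W$ and any bounded linear $A\colon Q \to W$. This follows directly from the definition of the upper adjoint: $\dir{q} \in [J+A]^{*+}(\dir{w})$ means $\iprod{\dir{q}}{\dir{\alt{q}}} \le \iprod{\dir{w}}{\dir{v}+A\dir{\alt{q}}}$ for every $\dir{\alt{q}} \in Q$ and every $\dir{v} \in J(\dir{\alt{q}})$, and the adjoint relation $\iprod{\dir{w}}{A\dir{\alt{q}}} = \iprod{A^*\dir{w}}{\dir{\alt{q}}}$ rearranges this equivalently to $\dir{q}-A^*\dir{w} \in J^{*+}(\dir{w})$.

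Applying this identity with $J = DP(q|w-h(q))$ and $A = \grad h(q)$, and combining with the two displays from the first paragraph, immediately yields $\frechetCod(P+h)(q|w) = [D(P+h)(q|w)]^{*+}$, as claimed. No substantial obstacle arises: all the real analytic work has been carried out in Theorem~\ref{thm:p-graph-deriv}, and the role of this corollary is merely to absorb the single-valued smooth perturbation $h$ through the already-established sum rules together with the elementary upper-adjoint computation above.
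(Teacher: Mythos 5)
Your proposal is correct and follows essentially the same route as the paper: both reduce to the sum rules of \cref{cor:cod-p-plus-smooth}, apply \cref{thm:p-graph-deriv} to $P$ at the shifted base point $(q, w-h(q))$, and then absorb the linear term via the adjoint relation $\iprod{\dir{w}}{\grad h(q)\dir{\alt{q}}} = \iprod{[\grad h(q)]^*\dir{w}}{\dir{\alt{q}}}$. The only difference is cosmetic — you package this last step as a standalone identity $[J+A]^{*+} = J^{*+} + A^*$, whereas the paper carries out the same rearrangement inline as a chain of equivalent inequalities.
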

\begin{proof}
    Let us set $\somesetmap \defeq P+h$, and recall from \cref{cor:cod-p-plus-smooth} that
    \[
        \dir{q} \in \frechetCod \somesetmap(q|w)(\dir{w}) = \frechetCod P(q|w-h(q))(\dir{w})+[\grad h(q)]^*\dir{w}.
    \]
    This is the same to say as that  $\dir{\bar q} \defeq \dir{q} -[\grad h(q)]^*\dir{w}\in \frechetCod P(q|\alt{w})(\dir{w})$ for $\alt{w} \defeq w-h(q)$.
    By \cref{thm:p-graph-deriv} this holds if and only if
    \[
        \iprod{\dir{\bar q}}{\dir{\alt{q}}} \le \iprod{\dir{w}}{\dir{\alt{w}}}
        \text{ when } \dir{\alt{w}} \in DP(q|\alt{w})(\dir{\alt{q}}),
    \]
    or equivalently
    \[
        \iprod{\dir{q}}{\dir{\alt{q}}} \le \iprod{\dir{w}}{\dir{\alt{w}}+\grad h(q)\dir{\alt{q}}}
        \text{ when } \dir{\alt{w}} \in DP(q|\alt{w})(\dir{\alt{q}}),
    \]
    which is just the same as
    \[
        \iprod{\dir{q}}{\dir{\alt{q}}} \le \iprod{\dir{w}}{\dir{\alt{w}}}
        \text{ when } \dir{\alt{w}} \in DP(q|\alt{w})(\dir{\alt{q}})+\grad h(q)\dir{\alt{q}}.
    \]
    Now we just use \eqref{eq:d-p-plus-smooth} to derive \eqref{eq:p-plus-h-upper-adjoint}.
\end{proof}

\begin{corollary}
    \label{corollary:g-form-graphderiv}
    Let $G: L^2(\Omega; \R^m) \to \R$ have the form \eqref{eq:g-pointwise-integral} for some regular integrand $g$.
    Then the graphical derivative of $\subdiff G$ at $u$ for $\coPrimal$ in the direction $\dir{u}$, where $u,\coPrimal,\dir{u} \in L^2(\Omega; \R^m)$, is given by
    \begin{equation}
        \label{eq:g-d-form}
        D{[\subdiff G]}(u|\coPrimal)(\dir{u})
        =
        \left\{
            \dir\coPrimal \in L^2(\Omega; \R^m)
            \,\middle|\,
            \begin{array}{r}
            \dir\coPrimal(x) \in D{[\subdiff g(x, \freevar)]}(u(x)|\coPrimal(x))(\dir{u}(x)) \\
            \text{ for a.\,e. } x \in \Omega
            \end{array}
        \right\}.
    \end{equation}
    Moreover,
    \begin{equation}
        \frechetCod{} [\partial G](u|\coPrimal) = [D[\partial G](u|\coPrimal)]^{*+}.
    \end{equation}
\end{corollary}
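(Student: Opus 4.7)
The plan is to observe that this corollary is an immediate combination of results already proved in this section, rather than requiring a fresh argument.

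The first formula, \eqref{eq:g-d-form}, is literally the content of equation \eqref{eq:g-form-graph} in \cref{corollary:g-form}, so there is nothing new to verify for it: that identity was obtained by applying \cref{prop:pointwise-normal-cone} to $U = \graph \subdiff G$, and the geometric derivability of the sets $C(x) = \{(z,\coPrimal) \in \R^m \times \R^m : \coPrimal \in \subdiff g(x,z)\}$ required there is precisely what the regularity assumption on $g$ supplies via \cite[Prop.~8.41, Ex.~13.30, Thm.~13.40]{Rockafellar:1998}. I would simply quote this as the first half of the proof.

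For the upper-adjoint identity, the plan is to invoke \cref{thm:p-graph-deriv} with $P \defeq \subdiff G$, viewed as a pointwise set-valued mapping from $Q \sim L^2(\Omega;\R^m)$ to $W \sim L^2(\Omega;\R^m)$ with $p(x,\freevar) \defeq \subdiff g(x,\freevar)\colon \R^m \setto \R^m$. To apply that theorem I must check its two hypotheses: (a) proto-differentiability of $p(x,\freevar)$ for almost every $x$, which is exactly condition (iii) in the definition of a regular integrand; and (b) local closedness of $\graph p(x,\freevar)$ at $(u(x),\coPrimal(x))$ for almost every $x$. The latter is in fact immediate, since conditions (i)--(ii) make $g(x,\freevar)$ a proper, convex, lower semicontinuous function on $\R^m$, and the graph of the convex subdifferential of such a function is globally closed, as already noted in \cref{sec:pointwise_deriv}. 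With both hypotheses in place, \cref{thm:p-graph-deriv} yields $\frechetCod [\subdiff G](u|\coPrimal) = [D[\subdiff G](u|\coPrimal)]^{*+}$, which is the desired identity.

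No substantive obstacle is anticipated: the proof is purely a matter of recognizing that the hypotheses of \cref{thm:p-graph-deriv} are satisfied in the integral-functional setting, and both premises follow directly from the regularity of the integrand $g$ combined with the standard closedness of subdifferential graphs of proper convex lower semicontinuous functions. If anything, the minor care needed is in the identification $k = m$ so that $P$ maps into the same $L^2$ space in which $\coPrimal$ lives, which is automatic here because the subdifferential of a function on $\R^m$ takes values in (the dual of) $\R^m$.
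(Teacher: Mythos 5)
Your proposal is correct and follows essentially the same route as the paper: the pointwise formula is the specialization of \cref{corollary:g-form} (equivalently of \cref{thm:p-graph-deriv} via \cref{corollary:p-form}), and the upper-adjoint identity comes from applying \cref{thm:p-graph-deriv} with $p(x,\freevar)=\subdiff g(x,\freevar)$, whose proto-differentiability is assumption (iii) of regularity and whose graph is closed because $g(x,\freevar)$ is proper, convex, and lower semicontinuous. The paper's own proof is just a terser version of exactly this argument.
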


\begin{proof}
    The claim follows directly from \cref{thm:p-graph-deriv} with $p(x, z)=\subdiff g(x, z)$. Local closedness of $\graph \subdiff g(x, \freevar)$ is a consequence of the lower semicontinuity of $g$.
\end{proof}

\subsection{Examples}\label{sec:pointwise:examples}
We now study specific cases of the finite- and infinite-dimensional second-order generalized derivatives, relevant to our model problems \eqref{eq:l1fit_problem} and \eqref{eq:linffit_problem}. Other examples satisfying the assumptions are the piecewise linear-quadratic ``multi-bang'' and switching penalties introduced in \cite{ClasonKunisch:2013} and \cite{ClasonKunisch:2016}, respectively.

\subsubsection{Squared \texorpdfstring{$\scriptstyle L^2(\Omega;\R^m)$}{L²(Ω;ℝᵐ)} norm}
\label{sec:l2-fitting-analysis}

The following result is standard; see, e.g., \cite[Ex.~8.34]{Rockafellar:1998}.
\begin{lemma}
    \label{lemma:two-norm-squared-deriv}
    With $z \in \R^m$, let $g(\FdVar)=\frac12\norm{\FdVar}^2$. Then $\subdiff g$ is proto-differentiable with
    \begin{equation}
        D(\subdiff g)(\FdVar|\coFdVar)(\dir\FdVar) =
        \begin{cases}
            \dir\FdVar, & \coFdVar=\FdVar, \\
            \emptyset, & \text{otherwise}.
        \end{cases}
    \end{equation}
\end{lemma}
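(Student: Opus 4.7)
The plan is to exploit the fact that $g(z) = \tfrac12\|z\|^2$ is smooth: its convex subdifferential reduces to the single-valued map $\subdiff g(z) = \{\grad g(z)\} = \{z\}$. Consequently $\graph \subdiff g$ equals the diagonal $\Delta \defeq \{(z,z) : z \in \R^m\}$, which is a linear subspace of $\R^m \times \R^m$. This turns the problem into elementary tangent-cone geometry of a linear subspace.

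I would first verify proto-differentiability by showing $\Delta$ is geometrically derivable at every point. For any $(z,z) \in \Delta$ and any candidate tangent vector of the form $(\dir z, \dir z)$, the straight-line curve $\xi(t) \defeq (z + t\dir z,\, z + t\dir z)$ lies entirely in $\Delta$ with $\xi(0)=(z,z)$ and $\xi_+'(0) = (\dir z,\dir z)$. Since the tangent cone to a linear subspace coincides with the subspace itself, $T((z,z);\Delta) = \Delta$, so every tangent vector is realized by such a derivable curve, which is exactly the condition of \cref{def:geomderiv}.

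Next I would evaluate the graphical derivative using \eqref{eq:graphderiv-0}. When $\coFdVar = \FdVar$, the pair $(\FdVar,\coFdVar)$ lies in $\Delta$, and the identification $T((\FdVar,\FdVar);\Delta)=\Delta$ immediately gives that $\dir\coFdVar \in D(\subdiff g)(\FdVar|\coFdVar)(\dir\FdVar)$ if and only if $(\dir\FdVar,\dir\coFdVar) \in \Delta$, i.e., $\dir\coFdVar = \dir\FdVar$. When $\coFdVar \ne \FdVar$, the point $(\FdVar,\coFdVar)$ lies outside $\graph \subdiff g$; since the graph is closed (it is the graph of a subdifferential of a convex l.s.c. function) and $(\FdVar,\coFdVar)$ has a neighborhood disjoint from it, no sequence from $\graph \subdiff g$ can approach $(\FdVar,\coFdVar)$, so $T((\FdVar,\coFdVar);\graph \subdiff g) = \emptyset$ and the graphical derivative is empty.

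There is no real obstacle here — the result is essentially the observation that linear maps between Hilbert spaces coincide with their own graphical derivative (restricted to graph points), as was already noted after \eqref{eq:graphderiv}; the lemma is recorded for later reference in the computation of pointwise second-order objects via \cref{corollary:g-form-graphderiv}.
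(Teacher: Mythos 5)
Your proposal is correct and is essentially the standard argument: the paper does not spell out a proof but simply cites \cite[Ex.~8.34]{Rockafellar:1998}, which rests on exactly the observation you make, namely that $\subdiff g = \nabla g = \mathrm{Id}$ is single-valued and linear, so its graph is the diagonal subspace whose tangent cone at any of its points is itself (and is empty at points off the closed graph). Nothing further is needed.
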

From  \cref{corollary:g-form-graphderiv}, we immediately obtain
\begin{corollary}\label{cor:g-form-l2norm}
    With $g(\FdVar)=\frac12\norm{\FdVar}^2$ and $\Omega \subset \R^n$ an open bounded domain, let
    \[
        G(\Primal) \defeq \int_\Omega g(\Primal(x)) \,d x
        \qquad (\Primal \in L^2(\Omega; \R^m)).
    \]
    Then
    \[
        D(\subdiff G)(\Primal|\coPrimal)(\dir\Primal) = \dir\Primal
        \quad\text{and}\quad
        \frechetCod(\subdiff G)(\Primal|\coPrimal)(\dir\coPrimal) = \dir\coPrimal.
    \]
\end{corollary}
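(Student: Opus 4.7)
The plan is to reduce everything to the pointwise finite-dimensional computation in \cref{lemma:two-norm-squared-deriv} via \cref{corollary:g-form-graphderiv}, and then derive the coderivative formula from the graphical derivative by the upper-adjoint relation.

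First I would observe that $g(z) = \tfrac12\|z\|^2$ is a $C^1$, convex, piecewise linear-quadratic function on $\R^m$, hence a regular integrand (in particular, proto-differentiable), so \cref{corollary:g-form-graphderiv} applies. Since $\subdiff g(z)=\{z\}$, the pointwise subdifferential relation forces $\coPrimal(x)=\Primal(x)$ a.\,e., i.\,e., $\coPrimal=\Primal$ in $L^2(\Omega;\R^m)$, which is implicit in the notation $(\Primal|\coPrimal)$ (meaning $\coPrimal\in\subdiff G(\Primal)$). Applying the pointwise formula \eqref{eq:g-d-form} together with \cref{lemma:two-norm-squared-deriv} then yields
\[
    D(\subdiff G)(\Primal|\coPrimal)(\dir\Primal)
    = \{\dir\coPrimal \in L^2(\Omega;\R^m) \mid \dir\coPrimal(x)=\dir\Primal(x) \text{ a.\,e.}\}
    = \{\dir\Primal\},
\]
which proves the first identity.

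For the second identity, \cref{corollary:g-form-graphderiv} also gives the relation $\frechetCod(\subdiff G)(\Primal|\coPrimal) = [D(\subdiff G)(\Primal|\coPrimal)]^{*+}$. Since $D(\subdiff G)(\Primal|\coPrimal)$ is the (single-valued, linear, self-adjoint) identity operator on $L^2(\Omega;\R^m)$, computing its upper adjoint from the definition reduces to the condition $\iprod{\dir\Primal-\dir\coPrimal}{\dir{\alt\Primal}} \le 0$ for all $\dir{\alt\Primal}\in L^2(\Omega;\R^m)$, which forces $\dir\Primal=\dir\coPrimal$. Hence $\frechetCod(\subdiff G)(\Primal|\coPrimal)(\dir\coPrimal)=\dir\coPrimal$, as claimed.

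There is no real obstacle here beyond correctly invoking the framework: the ``hard part'' — the passage from the finite-dimensional pointwise coderivative of $\subdiff g$ to the infinite-dimensional coderivative of $\subdiff G$ — has already been done in \cref{prop:pointwise-normal-cone}, \cref{corollary:g-form}, \cref{thm:p-graph-deriv} and \cref{corollary:g-form-graphderiv}. The only minor care required is to verify that $g$ qualifies as a regular integrand and that one correctly identifies the identity map with its own upper adjoint.
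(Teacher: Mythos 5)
Your proposal is correct and follows the same route as the paper, which simply cites \cref{corollary:g-form-graphderiv} together with \cref{lemma:two-norm-squared-deriv} as an immediate consequence. Your fleshed-out version — checking that $g$ is a regular integrand, reading off the pointwise graphical derivative, and computing the upper adjoint of the identity — fills in exactly the steps the paper leaves implicit.
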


\subsubsection{Indicator function}
\label{sec:l1-fitting-analysis}

The following lemma is useful for computing $D[\subdiff F^*](\Dual|\coDual)$ for the problem \eqref{eq:l1fit_problem}. Its claim in the one-dimensional case ($m=1$) is illustrated in \cref{fig:indicator}.

\begin{lemma}
    \label{lemma:DsubdiffF}
    With $\FdVar \in \R^m$, let $f(\FdVar)=\ind_{\B(0,\alpha)}(\FdVar)$. Then $\subdiff f$ is proto-differentiable with
    \begin{equation}
        \label{eq:d-subdiff-indicator}
        D(\subdiff f)(\FdVar|\coFdVar)(\dir\FdVar) = 
        \begin{cases}
            \norm{\coFdVar} \dir\FdVar / \alpha + \R \FdVar, 
             & \norm{\FdVar}=\alpha,\, \coFdVar \in (0, \infty) \FdVar,\, \iprod{\FdVar}{\dir\FdVar}=0, \\
             [0, \infty) \FdVar, 
             & \norm{\FdVar}=\alpha,\, \norm{\coFdVar}=0,\, \iprod{\FdVar}{\dir\FdVar} =0, \\
             \{0\}, 
             & \norm{\FdVar}=\alpha,\, \norm{\coFdVar}=0,\, \iprod{\FdVar}{\dir\FdVar} < 0,  \\
             \{0\}, & \norm{\FdVar} < \alpha,\, \norm{\coFdVar}=0, \\
            \emptyset, & \text{otherwise}.
        \end{cases}
    \end{equation}
    In particular, if $m=1$, then
    \begin{align}
        \label{eq:d-subdiff-indicator-1d}
        D(\subdiff f)(\FdVar|\coFdVar)(\dir\FdVar) &= 
        \begin{cases}
            \R, 
             & \abs{\FdVar}=\alpha,\, \coFdVar \in (0, \infty) \FdVar,\, \dir\FdVar=0, \\
            [0, \infty) \FdVar,
             & \abs{\FdVar}=\alpha,\, \coFdVar=0,\, \dir\FdVar = 0,  \\
            \{0\}, 
             & \abs{\FdVar}=\alpha,\, \coFdVar = 0,\, \FdVar \dir\FdVar < 0, \\
            \{0\}, & \abs{\FdVar} < \alpha,\, \coFdVar=0,\, \\
            \emptyset, & \text{otherwise},
        \end{cases}
        \intertext{as well as}
        \label{eq:d-subdiff-indicator-1d-conv}
        \widetilde{D(\subdiff f)}(\FdVar|\coFdVar)(\dir\FdVar) &= 
        \begin{cases}
            \R, 
             & \abs{\FdVar}=\alpha,\, \coFdVar \in (0, \infty) \FdVar,\, \dir\FdVar=0, \\
            [0, \infty) \FdVar,
             & \abs{\FdVar}=\alpha,\, \coFdVar=0,\,  \FdVar \dir\FdVar \le 0, \\
            \{0\}, & \abs{\FdVar} < \alpha,\, \coFdVar=0,\, \\
            \emptyset, & \text{otherwise}.
        \end{cases}
    \end{align}
\end{lemma}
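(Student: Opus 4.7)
The plan is to reduce everything to computing the tangent cone $T((\FdVar, \coFdVar); \graph \subdiff f)$ and then specialize. Observe that $\subdiff f$ is the normal cone mapping of $\B(0, \alpha)$, so its graph stratifies into three regimes: (i) $\|\FdVar\| < \alpha$ with $\coFdVar = 0$; (ii) $\|\FdVar\| = \alpha$ with $\coFdVar = t\FdVar$ for some $t > 0$; and (iii) $\|\FdVar\| = \alpha$ with $\coFdVar = 0$. Proto-differentiability will follow from the representation of $\B(0, \alpha)$ as a sublevel set of the $C^2$ function $\tfrac{1}{2}\|\cdot\|^2$, making $f$ fully amenable and hence twice epi-differentiable by \cite[Cor.~13.41, Thm.~13.40]{Rockafellar:1998}. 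I would invoke that once and then focus the rest of the argument on the tangent cone.

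For regime (i), the graph coincides locally with $B(\FdVar, r) \times \{0\}$, hence $T = \R^m \times \{0\}$ and $D(\subdiff f)(\FdVar|0)(\dir\FdVar) = \{0\}$ for every $\dir\FdVar$, which is the fourth line of \eqref{eq:d-subdiff-indicator}. For regime (ii), I parametrize a geometrically derivable curve $s \mapsto (\FdVar(s), t(s)\FdVar(s))$ with $\FdVar(0) = \FdVar$, $t(0) = t > 0$, and $\|\FdVar(s)\|^2 = \alpha^2$. Since $t > 0$ the constraint $t(s) \ge 0$ is inactive, so $t'(0) \in \R$ is free; differentiating the sphere constraint yields $\iprod{\FdVar}{\dir\FdVar} = 0$, and the chain rule gives $\dir\coFdVar = t'(0)\FdVar + t\dir\FdVar$. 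Substituting $t = \|\coFdVar\|/\alpha$ produces the first line of \eqref{eq:d-subdiff-indicator}.

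Regime (iii) is the delicate case and the real obstacle, because two separate families of sequences in $\graph \subdiff f$ can converge to $(\FdVar, 0)$. Along the sphere with $(\FdVar^i, t^i \FdVar^i)$, $\|\FdVar^i\| = \alpha$, $t^i \downto 0$: the sphere constraint forces $\iprod{\FdVar}{\dir\FdVar} = 0$, and $\dir\coFdVar = \lim (t^i/\tau^i) \FdVar \in [0, \infty)\FdVar$. From the interior with $(\FdVar^i, 0)$ and $\|\FdVar^i\| < \alpha$, expanding $\|\FdVar + \tau^i \dir\FdVar + o(\tau^i)\|^2 \le \alpha^2$ to first order gives the necessary condition $\iprod{\FdVar}{\dir\FdVar} \le 0$, and such approaches contribute only $\dir\coFdVar = 0$. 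Combining and checking that no outward direction $\iprod{\FdVar}{\dir\FdVar} > 0$ is attainable yields lines two, three, and (implicitly) the empty case of \eqref{eq:d-subdiff-indicator}; I would verify proto-differentiability by constructing explicit derivable curves witnessing each of these tangent vectors.

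The one-dimensional reduction \eqref{eq:d-subdiff-indicator-1d} is then immediate: $\iprod{\FdVar}{\dir\FdVar} = 0$ collapses to $\dir\FdVar = 0$ since $\FdVar = \pm\alpha \neq 0$, and $\|\coFdVar\|\dir\FdVar/\alpha + \R\FdVar = \R$. For \eqref{eq:d-subdiff-indicator-1d-conv}, I would draw the graph of $D(\subdiff f)(\FdVar|0)$ in the $(\dir\FdVar, \dir\coFdVar)$-plane: it consists of the half-axis $\{0\} \times [0, \infty)\FdVar$ together with the half-axis $\{\dir\FdVar : \FdVar\dir\FdVar < 0\} \times \{0\}$, whose convex hull is the closed quadrant $\{(\dir\FdVar, \dir\coFdVar) : \FdVar\dir\FdVar \le 0,\ \dir\coFdVar \in [0, \infty)\FdVar\}$. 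This merges the second and third cases of \eqref{eq:d-subdiff-indicator-1d} into the single second case of \eqref{eq:d-subdiff-indicator-1d-conv}, while the other regimes are already convex and carry over unchanged.
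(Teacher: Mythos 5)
Your proposal is correct, and it takes a partly different route from the paper. The paper only proves the one-dimensional formula \eqref{eq:d-subdiff-indicator-1d} elementarily, by a case analysis on sequences $t^i \downto 0$, $\dir{\FdVar}^i \to \dir\FdVar$, $\coFdVar^i \in \subdiff f(\FdVar + t^i\dir{\FdVar}^i)$ in the outer-limit definition \eqref{eq:graphderiv}; for the full $m$-dimensional formula \eqref{eq:d-subdiff-indicator} it defers to an external reference (noting in a footnote that the reference itself contains a small omission in exactly the degenerate case $\norm{\coFdVar}=0$). You instead compute $T((\FdVar,\coFdVar);\graph\subdiff f)$ directly by stratifying the graph into the three regimes (interior, boundary with active multiplier, boundary with vanishing multiplier) and then read off the one-dimensional case as a specialization; this makes the $m$-dimensional statement self-contained, at the cost of the manifold/stratification bookkeeping in regime (ii) and the two-family analysis in regime (iii). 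Both arguments obtain proto-differentiability the same way, from twice epi-differentiability of $f$ via the sublevel-set representation of $\B(0,\alpha)$ and \cite[Thm.~13.40]{Rockafellar:1998}, and the convexification step is the same geometric picture (the convex hull of the two perpendicular half-axes in the degenerate case is the closed quadrant, merging the second and third lines of \eqref{eq:d-subdiff-indicator-1d}). One point to make explicit in a full write-up of your regime (ii) and (iii) arguments: you need both inclusions for the tangent cone --- the necessary conditions extracted from arbitrary approaching sequences (which rule out $\iprod{\FdVar}{\dir\FdVar}>0$ and force $\dir\coFdVar$ into the stated sets) and the explicit derivable curves realizing every stated tangent vector; you gesture at both, and both go through, so this is a matter of presentation rather than a gap.
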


\begin{figure}
    \centering
    \begin{subfigure}{0.30\textwidth}
        \centering
        % DF
        \begin{asy}
            unitsize(50,50);
            real l=1.5;
            real eps=0.3;
            pair gfstart=(-1, -l);
            pair gfend=(1, l);
            path gf=gfstart--(-1, 0)--(1, 0)--gfend;
            draw(gf, dashed, Arrows);

            draw((-eps, 0)--(eps, 0), linewidth(1.1), Arrows);
            dot((0, 0));
            label("(iv)", (0, 0), 1.5*N);

            draw((-1,-eps)--(-1,0)--(-1+eps, 0), linewidth(1.1), Arrows);
            dot((-1, 0));
            label("(ii)", (-1, -eps), 1.5*W);
            label("(iii)", (-1+eps, 0), 1.5*N);

            draw((1,0.75+eps)--(1, 0.75-eps), linewidth(1.1), Arrows);
            dot((1, 0.75));
            label("(i)", (1, 0.75), 1.5*E);
        \end{asy}
        \caption{$D(\subdiff f)(z|\zeta)$}
    \end{subfigure}
    \hfill
    \begin{subfigure}{0.30\textwidth}
        \centering
        % \widetilde DF
        \begin{asy}
            unitsize(50,50);
            real l=1.5;
            real eps=0.3;
            pair gfstart=(-1, -l);
            pair gfend=(1, l);
            path gf=gfstart--(-1, 0)--(1, 0)--gfend;
            draw(gf, dashed, Arrows);

            draw((-eps, 0)--(eps, 0), linewidth(1.1), Arrows);
            dot((0, 0));

            fill((-1,-eps)--(-1,0)--(-1+eps, 0)..controls (-1+eps/sqrt(2), -eps/sqrt(2))..cycle, gray(.8));
            draw((-1,-eps)--(-1,0)--(-1+eps, 0), linewidth(1.1), Arrows);
            dot((-1, 0));

            draw((1,0.75+eps)--(1, 0.75-eps), linewidth(1.1), Arrows);
            dot((1, 0.75));
        \end{asy}
        \caption{$\widetilde{D(\subdiff f)}(z|\zeta)$}
    \end{subfigure}
    \hfill
    \begin{subfigure}{0.30\textwidth}
        \centering
        % \hat D^* F
        \begin{asy}
            unitsize(50,50);
            real l=1.5;
            real eps=0.3;
            pair gfstart=(-1, -l);
            pair gfend=(1, l);
            path gf=gfstart--(-1, 0)--(1, 0)--gfend;
            draw(gf, dashed, Arrows);

            draw((0, -eps)--(0, eps), linewidth(1.1), Arrows);
            dot((0, 0));

            fill((-1-eps,0)--(-1,0)--(-1, eps)..controls (-1-eps/sqrt(2), eps/sqrt(2))..cycle, gray(.8));
            draw((-1-eps,0)--(-1,0)--(-1, eps), linewidth(1.1), Arrows);
            dot((-1, 0));

            draw((1-eps,0.75)--(1+eps, 0.75), linewidth(1.1), Arrows);
            dot((1, 0.75));
        \end{asy}
        \caption{$\widehat D^*(\subdiff f)(z|\zeta)$}
    \end{subfigure}
    \caption{Illustration of the graphical derivative and regular coderivative for $\subdiff f$ with $f=\ind_{[-1,1]}$. The dashed line is $\graph \subdiff f$. The dots indicate the base points $(z, \zeta)$ where the graphical derivative or coderivative is calculated, and the thick arrows and gray areas the directions of $(\dir z, \dir \zeta)$ relative to the base point.
    The labels (i) etc. indicate the corresponding case of \eqref{eq:d-subdiff-indicator-1d}.
    }
    \label{fig:indicator}
\end{figure}

\begin{proof}
    The proto-differentiability of $\subdiff f$ follows from the fact that $f$ is twice epi-differentiable; see \cite[Ex.~13.17 \& Thm.~13.40]{Rockafellar:1998}, writing $B{(0,\alpha)}=\{x\in\R^n\mid \norm{x}^2 \in (-\infty, \alpha]\}$ for the twice continuously differentiable mapping $x\mapsto \norm{x}^2$ and the polyhedral set $(-\infty,\alpha]$ satisfying the contraint qualification.

    For the full proof of \eqref{eq:d-subdiff-indicator}, using second-order subgradient theory from \cite{Rockafellar:1998}, we refer to \cite{Valkonen:2014}.%
        \footnote{There is a small omission in \cite[Lemma 4.2]{Valkonen:2014}, that actually causes $\widetilde{D(\subdiff f)}(\FdVar|\coFdVar)(\dir\FdVar)$ instead to be calculated. In calculating the subdifferential of (4.18) therein, at the end of the proof of the lemma, the cases $\iprod{y}{w}=0$ and $\iprod{y}{w} < 0$ need to be calculated separately to give the two different sub-cases of ($\norm{z}=\alpha$ and $\norm{\zeta}=0$) in our expression \eqref{eq:d-subdiff-indicator}.}
    For completeness, we provide here an elementary proof of the one-dimensional case~\eqref{eq:d-subdiff-indicator-1d}.
    We have
    \begin{equation}
        \label{eq:indicator-1d-subdiff}
        \subdiff f(\FdVar)=
        \begin{cases}
            [0, \infty)\FdVar, & \abs{\FdVar}=\alpha, \\
            \{0\}, & \abs{\FdVar} < \alpha, \\
            \emptyset, & \text{otherwise}.
        \end{cases}
    \end{equation}
    If $\coFdVar \in \subdiff f(\FdVar)$ and $\dir{\coFdVar} \in  D(\subdiff f)(\FdVar|\coFdVar)(\dir\FdVar)$, there exists by \eqref{eq:graphderiv} sequences $t^i \downto 0$, $\dir{\FdVar^i} \to \dir{\FdVar}$, and $\coFdVar^i \in \subdiff f(\FdVar+t^i\dir{\FdVar^i})$ such that
    \begin{equation}
        \label{eq:indicator-1d-limit}
        \dir{\coFdVar}=\lim_{i \to \infty} \frac1{t^i}(\coFdVar^i-\coFdVar).
    \end{equation}
    We proceed by case distinction.
    \begin{enumerate}[label=(\roman*)]
        \item If $\abs{\FdVar}=\alpha$, $\dir{\FdVar}=0$, and  $\coFdVar \in (0, \infty)\FdVar$, choosing $z^i=0$, we can for any $\dir{\coFdVar} \in \R$ and large enough $i$ take $\coFdVar^i=t^i\dir{\coFdVar}+\coFdVar \in [0, \infty)\FdVar = \subdiff f(\FdVar)$. Thus we obtain the first case in \eqref{eq:d-subdiff-indicator-1d}.

        \item Let us then suppose $\abs{\FdVar}=\alpha$, $\dir{\FdVar}=0$, but $\coFdVar=0$.
            In this case, choosing $z^i=0$, we have by \eqref{eq:indicator-1d-subdiff} free choice of $\coFdVar^i \in [0, \infty) \FdVar$. Picking $\dir{\coFdVar} \in [0, \infty) \FdVar$ and setting $\coFdVar^i \defeq t^i\dir{\coFdVar}$, we deduce that $\coFdVar^i \in [0, \infty) \FdVar \subset D(\subdiff f)(\FdVar|\coFdVar)(\dir{\FdVar})$. Since $-(0, \infty) \FdVar$ is clearly not approximable from $[0, \infty) \FdVar$,  we obtain the second case of \eqref{eq:d-subdiff-indicator-1d}.

        \item If $\abs{\FdVar}=\alpha$ and $\FdVar\dir{\FdVar}>0$, then $\subdiff f(\FdVar+t^i\dir{\FdVar^i})=\emptyset$ for large $i$. Therefore it must hold that $\FdVar\dir{\FdVar} \le 0$.
            If $\dir{\FdVar} \ne 0$, it follows that $\coFdVar^i=0$ (for large $i$).
            Since $\coFdVar$ is fixed, the limit \eqref{eq:indicator-1d-limit} does not exist unless $\coFdVar=0$, in which case also $\dir{\coFdVar}=0$.
            This is covered by the third case of~\eqref{eq:d-subdiff-indicator-1d}.

        \item If $\abs{\FdVar}<\alpha$, then $\coFdVar^i=\coFdVar=0$, so we get the fourth case in \eqref{eq:d-subdiff-indicator-1d}.

        \item If $\abs{\FdVar}=\alpha$ and $\dir{\FdVar}=0$, but $\coFdVar \in -(0, \infty)\FdVar$, we see that
            \[
                \sign{\coFdVar}\,\frac1{t^i}(\coFdVar^i-\coFdVar)>\frac{1}{t^i} \abs{\coFdVar},
            \]
            so the limit \eqref{eq:indicator-1d-limit} cannot exist. Therefore the coderivative is empty.

         Likewise, we obtain the empty coderivative if $\abs{\FdVar}>\alpha$, since even $\subdiff f(\FdVar)$ is empty and $\coFdVar$ does not exist. Together, we obtain the final case in \eqref{eq:d-subdiff-indicator-1d}.
    \end{enumerate}

    Finally, regarding $\widetilde{D(\subdiff f)}(\FdVar|\coFdVar)$ with $m=1$, we see that only the case $\abs{\FdVar}=\alpha$ and $\coFdVar=0$ is split into two sub-cases in \eqref{eq:d-subdiff-indicator-1d}, yielding an altogether non-convex $\graph[D(\subdiff f)(\FdVar|\coFdVar)]$. Taking the convexification of this set yields \eqref{eq:d-subdiff-indicator-1d-conv}; cf.~\cref{fig:indicator}.
\end{proof}

\begin{corollary}\label{cor:g-form-indicator}
    Let $f^*(\FdVar) \defeq \ind_{[-\alpha, \alpha]}(\FdVar)$ and
    \[
        F^*(\Dual) \defeq \int_\Omega f^*(\Dual(x)) \,d x
        \qquad (\Dual \in L^2(\Omega)).
    \]
    Then
    \begin{align}
        \widetilde{D[\subdiff F^*]}(\Dual|\coDual)(\dir\Dual)&=
        \begin{cases}
            \DerivConeF[\Dual|\coDual]^\circ, & \dir\Dual \in \DerivConeF[\Dual|\coDual] \text{ and } \coDual \in \subdiff F^*(\Dual),\\
            \emptyset, & \text{otherwise},
        \end{cases}
        \intertext{and}
        \frechetCod{[\subdiff F^*]}(\Dual|\coDual)(\dir\coDual)&=
        \begin{cases}
            {\DerivConeF[\Dual|\coDual]}^\circ, & -\dir\coDual \in \DerivConeF[\Dual|\coDual] \text{ and } \coDual \in \subdiff F^*(\Dual),\\
            \emptyset, & \text{otherwise},
        \end{cases}
    \end{align}
    for the cone
    \begin{align}
            \DerivConeF[\Dual|\coDual] &=
    \{
        \dcVar \in L^2(\Omega) 
        \mid
        \dcVar(x)\Dual(x)\le 0 \text{ if } \abs{\Dual(x)}=\alpha \ \text{ and }\ 
        \dcVar(x)\coDual(x)\ge 0
    \}
    \intertext{and its polar}
     \polar{\DerivConeF[\Dual|\coDual]}&=
    \{
        \nu \in L^2(\Omega) 
        \mid
        \nu(x)\Dual(x)\ge 0 \text{ if } \coDual(x)=0\ \text{ and }\
        \nu(x) = 0 \text{ if } \abs{\Dual(x)}<\alpha
    \}.
\end{align}
\end{corollary}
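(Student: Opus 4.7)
The plan is to reduce the infinite-dimensional graphical derivative to its pointwise form via \cref{corollary:g-form-graphderiv}, plug in the one-dimensional formulas from \cref{lemma:DsubdiffF}, pass to the convex hull to obtain $\widetilde{D[\subdiff F^*]}$, and then derive $\frechetCod[\subdiff F^*]$ through the identity $\frechetCod = [\widetilde{D}]^{*+}$ from \cref{thm:p-graph-deriv} combined with standard cone polarity.

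Step one: \cref{corollary:g-form-graphderiv} applied with $g\defeq f^*$ yields
\[
    \dir\coDual \in D[\subdiff F^*](\Dual|\coDual)(\dir\Dual)
    \iff
    \dir\coDual(x) \in D[\subdiff f^*](\Dual(x)|\coDual(x))(\dir\Dual(x))\text{ for a.e.\ } x \in \Omega.
\]
Inserting \eqref{eq:d-subdiff-indicator-1d} and its convexification \eqref{eq:d-subdiff-indicator-1d-conv}, the pointwise admissible directions $\dir\Dual(x)$ are exactly those in the cone $K(x)\defeq\{t\in\R\mid t\Dual(x)\le 0 \text{ when } |\Dual(x)|=\alpha,\ t\coDual(x)\ge 0\}$, and the corresponding value of $\widetilde{D[\subdiff f^*]}$ is the pointwise polar $\polar{K(x)}$. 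On the "problematic" region $\{|\Dual|=\alpha,\coDual=0\}$ the pointwise value is the quadrant $\{(s,t)\mid s\Dual\le 0, t\Dual\ge 0\}$, and any $(\dir\Dual(x),\dir\coDual(x))$ in it can be written as $\tfrac12(2\dir\Dual(x),0)+\tfrac12(0,2\dir\coDual(x))$, with both summands pointwise in $\graph D[\subdiff f^*]$; this constant-coefficient decomposition together with decomposability of $L^2(\Omega)$ shows that the pointwise convex hull agrees with the $L^2$ convex hull defining $\widetilde{D[\subdiff F^*]}$. Assembling pointwise gives $\dir\Dual\in\DerivConeF[\Dual|\coDual]$ as the admissibility condition (together with $\coDual\in\subdiff F^*(\Dual)$) and $\widetilde{D[\subdiff F^*]}(\Dual|\coDual)(\dir\Dual)=\polar{\DerivConeF[\Dual|\coDual]}$, yielding the first identity.

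Step two: \cref{thm:p-graph-deriv} gives $\frechetCod[\subdiff F^*](\Dual|\coDual)=[\widetilde{D[\subdiff F^*]}(\Dual|\coDual)]^{*+}$. Unfolding the upper adjoint via Step one, $\dir\Dual\in\frechetCod[\subdiff F^*](\Dual|\coDual)(\dir\coDual)$ is equivalent to
\[
    \iprod{\dir\Dual}{\dir{\alt\Dual}}\le \iprod{\dir\coDual}{\dir{\alt\coDual}}
    \quad\text{for all } \dir{\alt\Dual}\in\DerivConeF,\ \dir{\alt\coDual}\in\polar{\DerivConeF}.
\]
Testing against $\dir{\alt\coDual}=0$ forces $\dir\Dual\in\polar{\DerivConeF}$; testing against $\dir{\alt\Dual}=0$ and invoking the bipolar theorem (since $\DerivConeF$ is a closed convex cone in $L^2$) forces $-\dir\coDual\in\DerivConeF$. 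These two conditions are also sufficient, as the two sides of the bilinear inequality then have the expected nonpositive and nonnegative signs, respectively.

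Step three: The explicit form of $\polar{\DerivConeF}$ is a pointwise polar calculation on the three configurations permitted by $\coDual\in\subdiff F^*(\Dual)$. When $|\Dual(x)|<\alpha$ (so $\coDual(x)=0$) one has $K(x)=\R$ and hence $\polar{K(x)}=\{0\}$; when $|\Dual(x)|=\alpha$ and $\coDual(x)=0$, $K(x)=\{t\mid t\Dual(x)\le 0\}$ with polar $\{\nu\mid \nu\Dual(x)\ge 0\}$; when $|\Dual(x)|=\alpha$ and $\coDual(x)\ne 0$ the two sign constraints collapse $K(x)$ to $\{0\}$, so $\polar{K(x)}=\R$. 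These pool to the stated description. The main technical obstacle is the decomposability step linking the pointwise convex hull to the $L^2$ one; the rest of the argument is bookkeeping on the pointwise case analysis and cone polarities.
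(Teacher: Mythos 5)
Your proposal is correct and follows essentially the same route as the paper: the graphical-derivative claim via \cref{corollary:g-form-graphderiv} and \cref{lemma:DsubdiffF}, and the coderivative via the upper-adjoint identity of \cref{thm:p-graph-deriv}; the only difference is that you carry out inline (with the $\tfrac12(2\dir\Dual,0)+\tfrac12(0,2\dir\coDual)$ decomposition and the polarity tests) the computation that the paper delegates to \cref{prop:frechetcod-cone-operator} in the appendix. Your explicit treatment of the $L^2$ convex hull on the degenerate set $\{\abs{\Dual}=\alpha,\ \coDual=0\}$ is a detail the paper leaves implicit, and it is a welcome addition.
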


\begin{proof}
    The claim about the graphical derivative follows from \cref{corollary:g-form-graphderiv} and \cref{lemma:DsubdiffF}, using the fact that the indicator function of a closed convex set is normal. 
    The regular coderivative formula follows from the more general \cref{prop:frechetcod-cone-operator} in the appendix. Here, in the derivation of the explicit form of the polar cone $\polar{\DerivConeF[\Dual|\coDual]}$, we use the fact that $D[\subdiff F^*](\Dual|\coDual)(\dir\Dual)$ is non-empty if and only if 
    \begin{equation}
        \label{eq:D-fstar-l1-constr}
        \coDual(x)\Dual(x)=\abs{\coDual(x)}
        \quad\text{and}\quad \abs{\Dual(x)} \le \alpha
        \qquad (x \in \Omega).
        \qedhere
    \end{equation}
\end{proof}
\begin{remark}
    If $(\Dual,\coDual)$ satisfy the strict complementarity condition  $\abs{\Dual(x)}<\alpha$ or $\abs{\coDual(x)}>0$  for a.\,e.~$x \in \Omega$, the degenerate second and third case in \eqref{eq:d-subdiff-indicator-1d} (corresponding to the gray areas in \cref{fig:indicator}) do not occur, and the cone simplifies to
     \[
         \DerivConeF[\Dual|\coDual] \defeq
         \{ \dcVar \in L^2(\Omega) \mid \dcVar(x)=0 ~\text{if}~ \abs{\Dual(x)}=\alpha,\, x \in \Omega \}.
     \]
    Note that points $x\in\Omega$ where a degenerate case occurs are precisely those where there is no \term{graphical regularity} of $\partial f$ at $(\Dual(x), \coDual(x))$. We refer to \cite[Thm.~8.40]{Rockafellar:1998} for the definition of this concept, which we do not require in the present work.
\end{remark}

\subsubsection{\texorpdfstring{$\scriptstyle L^1(\Omega;\R^m)$}{L¹(Ω;ℝᵐ)} norm}

The following lemma is useful for computing $D[\subdiff F^*]$ for the problem \eqref{eq:linffit_problem}. Its claim in the one-dimensional case ($m=1$) is illustrated in \cref{fig:1norm-1d}.
\begin{lemma}
    \label{lemma:one-norm-findim}
    With $\FdVar \in \R^m$, let $f^*(\FdVar)=\norm{\FdVar}_2$. Then $\subdiff f^*$ is proto-differentiable with
    \begin{equation}
        \label{eq:d-subdiff-1norm}
        D(\subdiff f^*)(\FdVar|\coFdVar)(\dir\FdVar) = 
        \begin{cases}
            \left(\frac{I-(\FdVar \otimes \FdVar) / \norm{\FdVar}^2}{\norm{\FdVar}^2}\right) \dir\FdVar,
                & \FdVar \ne 0,\, \coFdVar\norm{\FdVar}=\FdVar, \\
                \{\coFdVar\}^\perp, 
                & \FdVar=0,\, \dir\FdVar \ne 0,\, \coFdVar\norm{\dir\FdVar}=\dir\FdVar, \\
                \polar{\{\coFdVar\}}, 
                & \FdVar=0,\, \dir\FdVar = 0,\, \norm{\coFdVar} = 1, \\
                \R^m,
                & \FdVar=0,\, \dir\FdVar = 0,\, \norm{\coFdVar} < 1, \\
            \emptyset,
                & \text{otherwise.}
        \end{cases}
    \end{equation}
    In particular, if $m=1$, then
    \begin{align}
        \label{eq:d-subdiff-1norm-1d}
        D(\subdiff f^*)(\FdVar|\coFdVar)(\dir\FdVar) &= 
        \begin{cases}
            \{0\}
                & \FdVar \ne 0,\, \coFdVar = \sign \FdVar, \\
            \{0\}, 
                & \FdVar=0,\, 
                \dir\FdVar \in (0, \infty)\zeta,
                \\
            (-\infty,0]\coFdVar, 
                & \FdVar=0,\, \dir\FdVar = 0,\, \abs{\coFdVar} = 1, \\
            \R,
                & \FdVar=0,\, \dir\FdVar = 0,\, \abs{\coFdVar} < 1, \\
            \emptyset,
                & \text{otherwise,}
        \end{cases}
    \intertext{as well as}
        \label{eq:d-subdiff-1norm-1d-conv}
        \widetilde{D(\subdiff f^*)}(\FdVar|\coFdVar)(\dir\FdVar) &= 
        \begin{cases}
            \{0\}
                & \FdVar \ne 0,\, \coFdVar = \sign \FdVar, \\
            (-\infty,0]\coFdVar, 
                & \FdVar=0,\, 
                \dir\FdVar \in [0, \infty)\zeta,\,
                \abs{\coFdVar} = 1, \\
            \R,
                & \FdVar=0,\, \dir\FdVar = 0,\, \abs{\coFdVar} < 1, \\
            \emptyset,
                & \text{otherwise.}
        \end{cases}
    \end{align}
\end{lemma}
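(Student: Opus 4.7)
The plan is to compute the graphical derivative directly from the definition \eqref{eq:graphderiv} by case analysis on $\FdVar$ and $\dir{\FdVar}$, using that $\subdiff f^*(\FdVar) = \{\FdVar/\|\FdVar\|\}$ for $\FdVar \ne 0$ and $\subdiff f^*(0) = \Bclosed(0,1)$. Proto-differentiability of $\subdiff f^*$ can be obtained from \cref{lemma:DsubdiffF} by noting that $f^* = \ind_{\Bclosed(0,1)}^*$, so $\graph \subdiff f^*$ is obtained from $\graph \subdiff \ind_{\Bclosed(0,1)}$ by swapping coordinates---an operation that preserves geometric derivability.

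In the smooth case $\FdVar \ne 0$, $\subdiff f^*$ coincides locally with the $C^\infty$ map $z \mapsto z/\|z\|$, so its graphical derivative at a base point $(\FdVar, \coFdVar)$ (necessarily satisfying $\coFdVar = \FdVar/\|\FdVar\|$) equals the Jacobian of this map applied to $\dir{\FdVar}$; a direct computation yields the first line of \eqref{eq:d-subdiff-1norm}. If $\coFdVar \ne \FdVar/\|\FdVar\|$, then $(\FdVar, \coFdVar) \notin \graph \subdiff f^*$ and the derivative is empty.

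For the singular base point $\FdVar = 0$ (necessarily with $\|\coFdVar\| \le 1$), I would split according to $\dir{\FdVar}$. If $\dir{\FdVar} \ne 0$, then $t^i \dir{\FdVar}^i \ne 0$ for all large $i$, forcing $\coFdVar^i = \dir{\FdVar}^i/\|\dir{\FdVar}^i\| \to \dir{\FdVar}/\|\dir{\FdVar}\|$; matching this limit to $\coFdVar$ requires $\coFdVar \|\dir{\FdVar}\| = \dir{\FdVar}$. Writing $\dir{\FdVar}^i = \dir{\FdVar} + t^i \eta^i$ and expanding to first order, $(\coFdVar^i - \coFdVar)/t^i$ sweeps exactly the range of the Jacobian of $z \mapsto z/\|z\|$ at $\dir{\FdVar}$, namely $\{\dir{\FdVar}\}^\perp = \{\coFdVar\}^\perp$, giving the second line. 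If $\dir{\FdVar} = 0$, I would consider two sub-choices of approximating sequence: (a) $\dir{\FdVar}^i = 0$, leaving $\coFdVar^i$ free in $\Bclosed(0,1)$, and (b) $\dir{\FdVar}^i \ne 0$, forcing $\coFdVar^i = \dir{\FdVar}^i/\|\dir{\FdVar}^i\|$ to be a unit vector. In (a), the feasibility $\|\coFdVar^i\|^2 \le 1$ reduces via first-order expansion to no constraint when $\|\coFdVar\| < 1$ (the fourth line) and to the linear constraint $\langle \coFdVar, \dir{\coFdVar}\rangle \le 0$ when $\|\coFdVar\| = 1$ (the third line); sub-case (b) requires $\|\coFdVar\| = 1$ and only produces the subset $\{\coFdVar\}^\perp \subset \polar{\{\coFdVar\}}$, already contained in (a). Any remaining configuration admits no valid approximating sequence, yielding the empty set.

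The one-dimensional formula \eqref{eq:d-subdiff-1norm-1d} follows by direct substitution, using $\{\coFdVar\}^\perp = \{0\}$ when $|\coFdVar| = 1$ and $\polar{\{\coFdVar\}} = (-\infty, 0]\coFdVar$. For \eqref{eq:d-subdiff-1norm-1d-conv} I would observe, parallel to the treatment in \cref{lemma:DsubdiffF}, that the only non-convexity of $\graph[D(\subdiff f^*)(\FdVar|\coFdVar)]$ occurs at $\FdVar = 0$ with $|\coFdVar| = 1$, where the graph consists of two rays through the origin---one in the $\dir{\FdVar}$-axis along $[0,\infty)\coFdVar$, one in the $\dir{\coFdVar}$-axis along $(-\infty, 0]\coFdVar$---whose convex hull is the stated quadrant. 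The main technical difficulty I anticipate is reconciling the two types of approximating sequences in the $\FdVar = 0$, $\dir{\FdVar} = 0$ case; the rest of the argument is routine smooth calculus plus bookkeeping.
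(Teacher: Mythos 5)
Your proposal is correct and follows essentially the same route as the paper's proof: a direct case-by-case computation of the outer limit defining the graphical derivative, treating the smooth regime $z \ne 0$ via the Jacobian of $z \mapsto z/\norm{z}$ and splitting the singular base point $z=0$ into the same three sub-cases ($\dir{z}\ne 0$; $\dir{z}=0$ with $\norm{\zeta}=1$; $\dir{z}=0$ with $\norm{\zeta}<1$), followed by the identical specialization to $m=1$ and convexification of the two rays at $z=0$, $\abs{\zeta}=1$. The only (valid, slightly more elementary) variation is that you obtain proto-differentiability by transporting geometric derivability of $\graph \partial \iota_{\overline{B}(0,1)}$ through the coordinate swap $(z,\zeta)\mapsto(\zeta,z)$, whereas the paper invokes twice epi-differentiability together with the conjugacy theorem \cite[Thm.~13.21]{Rockafellar:1998}; these amount to the same fact.
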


\begin{figure}
    \centering
    \begin{subfigure}{0.30\textwidth}
        \centering
        % DF
        \begin{asy}
            unitsize(50,50);
            real l=1.25;
            real eps=0.3;
            pair gfstart=(-l, -1);
            pair gfend=(l, 1);
            path gf=gfstart--(0, -1)--(0, 1)--gfend;
            draw(gf, dashed, Arrows);

            draw((0, -eps)--(0, eps), linewidth(1.1), Arrows);
            dot((0, 0));
            label("(iv)", (0,0), 1.5*W);

            draw((-eps,-1)--(0,-1)--(0, -1+eps), linewidth(1.1), Arrows);
            dot((0, -1));
            label("(iii)", (0,-1+eps), 1.5*E);
            label("(ii)", (-eps,-1), 1.5*S);

            draw((0.5*l-eps, 1)--(0.5*l+eps, 1), linewidth(1.1), Arrows);
            dot((0.5*l, 1));
            label("(i)", (0.5*l,1), 1.5*N);
        \end{asy}
        \caption{$D(\subdiff f^*)(z|\zeta)$}
    \end{subfigure}
    \hfill
    \begin{subfigure}{0.30\textwidth}
        \centering
        % \widetilde DF
        \begin{asy}
            unitsize(50,50);
            real l=1.25;
            real eps=0.3;
            pair gfstart=(-l, -1);
            pair gfend=(l, 1);
            path gf=gfstart--(0, -1)--(0, 1)--gfend;
            draw(gf, dashed, Arrows);

            draw((0, -eps)--(0, eps), linewidth(1.1), Arrows);
            dot((0, 0));

            fill((-eps, -1)--(0,-1)--(0, -1+eps)..controls (-eps/sqrt(2), -1+eps/sqrt(2))..cycle, gray(.8));
            draw((-eps, -1)--(0,-1)--(0, -1+eps), linewidth(1.1), Arrows);
            dot((0, -1));

            draw((0.5*l-eps, 1)--(0.5*l+eps, 1), linewidth(1.1), Arrows);
            dot((0.5*l, 1));

            // For alignment
            label("$\phantom{(ii)}$", (-eps,-1), 1.5*S);
            label("$\phantom{(i)}$", (0.5*l,1), 1.5*N);
        \end{asy}
        \caption{$\widetilde{D(\subdiff f^*)}(z|\zeta)$}
    \end{subfigure}
    \hfill
    \begin{subfigure}{0.30\textwidth}
        \centering
        % \hat D^* F
        \begin{asy}
            unitsize(50,50);
            real l=1.25;
            real eps=0.3;
            pair gfstart=(-l, -1);
            pair gfend=(l, 1);
            path gf=gfstart--(0, -1)--(0, 1)--gfend;
            draw(gf, dashed, Arrows);

            draw((-eps,0)--(eps, 0), linewidth(1.1), Arrows);
            dot((0, 0));

            fill((0, -1-eps)--(0,-1)--(eps, -1)..controls (eps/sqrt(2), -1-eps/sqrt(2))..cycle, gray(.8));
            draw((0, -1-eps)--(0,-1)--(eps, -1), linewidth(1.1), Arrows);
            dot((0, -1));

            draw((0.5*l, 1+eps)--(0.5*l, 1-eps), linewidth(1.1), Arrows);
            dot((0.5*l, 1));
        \end{asy}
        \caption{$\widehat D^*(\subdiff f^*)(z|\zeta)$}
    \end{subfigure}
    \caption{Illustration of the graphical derivative and regular coderivative for $\subdiff f^*$ with $f^*=\abs{\freevar}$. The dashed line is $\graph \subdiff f$. The dots indicate the base points $(z, \zeta)$ where the graphical derivative or coderivative is calculated, and the thick arrows and gray areas the directions of $(\dir z, \dir \zeta)$ relative to the base point.
    The labels (i) etc. indicate the corresponding case of \eqref{eq:d-subdiff-1norm-1d}.
    }
    \label{fig:1norm-1d}
\end{figure}

\begin{proof}
    In the case $m=1$, the proto-differentiability of $\subdiff f^*$ follows from the fact that $f^*$ is piecewise linear and hence twice epi-differentiable; see \cite[Prop.~13.9 \& Thm.~13.40]{Rockafellar:1998}. For general $m \in \N$, we may use the twice epi-differentiability of $f(x) = \iota_{\B(0, 1)}(x)$ established in the proof of \cref{lemma:DsubdiffF} and the conjugate relationship in \cite[Thm.~13.21]{Rockafellar:1998} together with \cite[Thm.~13.40]{Rockafellar:1998}.

    It remains to verify the expressions \eqref{eq:d-subdiff-1norm}--\eqref{eq:d-subdiff-1norm-1d-conv}.
    We have for any $m\in \N$ that
    \[ 
        \subdiff f^*(\FdVar)=
        \begin{cases}
            \left\{\frac{\FdVar}{\norm{\FdVar}}\right\}, & \FdVar \ne 0 \\
            \closure B(0, 1), & \FdVar = 0.
        \end{cases}
    \]
    We again proceed by case distinction.
    \begin{enumerate}[label=(\roman*)]
        \item For $\FdVar \ne 0$ necessarily therefore $D(\subdiff f^*)(\FdVar|\coFdVar)=\emptyset$ unless $\coFdVar=\FdVar/\norm{\FdVar}$, which yields the last case. 

        \item If $\FdVar \ne 0$ and $\coFdVar=\FdVar/\norm{\FdVar}$, for any
    \[
                \alt{\FdVar}=\FdVar+t \dir{\alt{\FdVar}}/\norm{\dir{\alt{\FdVar}}}
    \]
            with $\alt{\FdVar} \to z$ and $t \downto 0$, we have also $\subdiff f^*(\alt{\FdVar})=\alt{\FdVar}/\norm{\alt{\FdVar}}$. The first case in \eqref{eq:d-subdiff-1norm} now follows immediately from computing the outer limit
    \begin{equation}
        \limsup_{t \downto 0, \dir{\alt{\FdVar}} \to \dir\FdVar} \frac{\subdiff f^*(\alt{\FdVar})-\coFdVar}{t}
        =\limsup_{t \downto 0, \dir{\alt{\FdVar}} \to \dir\FdVar} \frac{\alt{\FdVar}/\norm{\alt{\FdVar}}-\coFdVar}{t}
        =\grad\left(\frac{\FdVar}{\norm{\FdVar}}\right) \dir\FdVar.
    \end{equation}

        \item If $\FdVar=0$, and $\dir\FdVar \ne 0$, then $\alt{\FdVar} \ne 0$ and $\alt{\FdVar}/\norm{\alt{\FdVar}}=\dir{\alt{\FdVar}}/\norm{\dir{\alt{\FdVar}}}$. Therefore
            \begin{equation}
                \limsup_{t \downto 0, \dir{\alt{\FdVar}} \to \dir\FdVar} \frac{\subdiff f^*(\alt{\FdVar})-\coFdVar}{t}
                =\limsup_{t \downto 0, \dir{\alt{\FdVar}} \to \dir\FdVar} \frac{\dir{\alt{\FdVar}}/\norm{\dir{\alt{\FdVar}}}-\coFdVar}{t}
            \end{equation}
            will only have limits if $\coFdVar$ lies on the boundary of $B(0, 1)$, and indeed $\coFdVar=\dir\FdVar/\norm{\dir\FdVar}$. This gives the limit $\{\coFdVar\}^\perp$, i.e., the second case.

        \item If $\FdVar=0$ and $\dir\FdVar=0$, then we may pick $\coFdVar \in \closure B(0, 1)$ arbitrarily by choosing also $\dir{\alt{\FdVar}}=0$.
            If $\norm{\coFdVar}=1$, then we obtain the limit
            \[
                \limsup_{t \to \infty} \frac1{t}(\B(0,1)-\coFdVar)=\{\dir\coFdVar \in \R^m \mid \iprod{\dir\coFdVar}{\coFdVar}\le 0\}=\polar{\{\coFdVar\}}
            \]
            and hence the third case.

        \item In the same situation, choosing $\norm{\coFdVar}<1$ gives the limit $\R^m$ and hence the fourth case.
    \end{enumerate}

    Finally, \eqref{eq:d-subdiff-1norm-1d} is a trivial specialization of \eqref{eq:d-subdiff-1norm}, while regarding $\widetilde{D(\subdiff f^*)}(\FdVar|\coFdVar)$ with $m=1$, we see that only the case $\FdVar=0$ and $\abs{\coFdVar}=1$ is split into two sub-cases in \eqref{eq:d-subdiff-1norm-1d}. These produce an altogether non-convex $\graph[D(\subdiff f^*)(\FdVar|\coFdVar)]$. Taking the convexification of this set yields \eqref{eq:d-subdiff-1norm-1d-conv}; cf.~\cref{fig:1norm-1d}.
\end{proof}

\begin{corollary}\label{cor:g-form-l1norm}
    Let $f^*(\FdVar) \defeq \noise\abs{\FdVar}$ and
    \[
        F^*(\Dual) \defeq \int_\Omega f^*(\Dual(x)) \,d x
        \qquad (\Dual \in L^2(\Omega)).
    \]
    Then
    \begin{align}
        \widetilde{D[\subdiff F^*]}(\Dual|\coDual)(\dir\Dual)&=
        \begin{cases}
            \DerivConeF[\Dual|\coDual]^\circ, & \dir\Dual \in \DerivConeF[\Dual|\coDual] \text{ and } \coDual \in \subdiff F^*(\Dual),\\
            \emptyset, & \text{otherwise},
        \end{cases}
        \intertext{and}
        \frechetCod{[\subdiff F^*]}(\Dual|\coDual)(\dir\coDual)&=
        \begin{cases}
            {\DerivConeF[\Dual|\coDual]}^\circ, & -\dir\coDual \in \DerivConeF[\Dual|\coDual] \text{ and } \coDual \in \subdiff F^*(\Dual),\\
            \emptyset, & \text{otherwise},
        \end{cases}
    \end{align}
    for the cone
\begin{align}
       \label{eq:linfty-fitting-z}
    {\DerivConeF[\Dual|\coDual]} &=
    \{
        \dcVar \in L^2(\Omega) 
        \mid
        \dcVar(x)\coDual(x) \ge 0
        \text{ if } \Dual(x)=0\ \text{ and }\ 
        (\noise-\abs{\coDual(x)})\dcVar(x)=0
    \},
    \intertext{and its polar}
    \label{eq:linfty-fitting-nu}
    \polar{{\DerivConeF[\Dual|\coDual]}} &=
    \{
        \nu \in L^2(\Omega) 
        \mid
        \nu(x)\coDual(x)\le 0 \text{ if } \abs{\coDual(x)}=\noise\ \text{ and }\ 
        \Dual(x)\nu(x)=0
    \}.
\end{align}
\end{corollary}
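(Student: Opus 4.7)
The plan is to mirror the argument used in \cref{cor:g-form-indicator}, replacing \cref{lemma:DsubdiffF} by the one-norm computation of \cref{lemma:one-norm-findim}. First I apply \cref{corollary:g-form-graphderiv} with integrand $g = f^*$ to reduce the computation of $\widetilde{D[\subdiff F^*]}(\Dual|\coDual)$ to the pointwise evaluation of $\widetilde{D(\subdiff f^*)}(\Dual(x)|\coDual(x))$. Since \eqref{eq:d-subdiff-1norm-1d-conv} is stated for $f^* = \abs{\freevar}$ rather than $\noise\abs{\freevar}$, I first note that multiplication by the positive factor $\noise$ rescales the second coordinate of $\graph\subdiff f^*$, so the case distinction in \eqref{eq:d-subdiff-1norm-1d-conv} carries over with $\coFdVar = \sign\FdVar$ replaced by $\coFdVar = \noise\sign\FdVar$, $\abs{\coFdVar}=1$ by $\abs{\coFdVar}=\noise$, and $\abs{\coFdVar}<1$ by $\abs{\coFdVar}<\noise$; the admissible-direction subsets for $\dir\FdVar$ are unchanged.

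I then examine pointwise when $\widetilde{D(\subdiff f^*)}(\Dual(x)|\coDual(x))(\dir\Dual(x))$ is non-empty. The three active cases---$\Dual(x)\ne 0$ (which forces $\coDual(x)=\noise\sign\Dual(x)$, with $\dir\Dual(x)$ free), $\Dual(x)=0$ with $\abs{\coDual(x)}=\noise$ (which requires $\dir\Dual(x)\coDual(x)\ge 0$), and $\Dual(x)=0$ with $\abs{\coDual(x)}<\noise$ (which forces $\dir\Dual(x)=0$)---combine into the single pointwise description
\[
    \dir\Dual(x)\coDual(x)\ge 0 \text{ if } \Dual(x)=0,
    \qquad
    (\noise-\abs{\coDual(x)})\dir\Dual(x)=0,
\]
which is exactly the cone $\DerivConeF[\Dual|\coDual]$ of \eqref{eq:linfty-fitting-z}. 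In each case, the corresponding graphical-derivative output coincides with the polar of the pointwise admissible-direction cone, so the stated formula for $\widetilde{D[\subdiff F^*]}$ follows. For the regular coderivative, I then invoke \cref{prop:frechetcod-cone-operator} from the appendix, exactly as in \cref{cor:g-form-indicator}, to convert the conic structure into the claimed expression involving $\polar{\DerivConeF[\Dual|\coDual]}$.

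Finally, I derive the explicit form \eqref{eq:linfty-fitting-nu} of the polar cone pointwise: $\polar{\R}=\{0\}$ when $\Dual(x)\ne 0$ (which is captured by the condition $\Dual(x)\nu(x)=0$), $\polar{([0,\infty)\coDual(x))}=\{\nu\in\R : \nu\coDual(x)\le 0\}$ when $\Dual(x)=0$ and $\abs{\coDual(x)}=\noise$, and $\polar{\{0\}}=\R$ when $\Dual(x)=0$ and $\abs{\coDual(x)}<\noise$. The main obstacle is bookkeeping: checking that the three cases combine cleanly into the symmetric complementarity conditions of \eqref{eq:linfty-fitting-z} and \eqref{eq:linfty-fitting-nu}, and that the $L^2$-polar of the cone agrees with the pointwise polar---the latter following from the same decomposability arguments underlying \cref{prop:pointwise-normal-cone}. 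The degenerate case $\Dual(x)=0$, $\abs{\coDual(x)}=\noise$---where \eqref{eq:d-subdiff-1norm-1d-conv} nontrivially convexifies \eqref{eq:d-subdiff-1norm-1d}---is the one that produces the directional inequality $\dcVar(x)\coDual(x)\ge 0$ in \eqref{eq:linfty-fitting-z}.
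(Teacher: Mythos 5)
Your proposal is correct and follows essentially the same route as the paper: reduce to the pointwise setting via \cref{corollary:g-form-graphderiv}, read off the case structure from \cref{lemma:one-norm-findim}, and obtain the coderivative from \cref{prop:frechetcod-cone-operator}. You additionally spell out the rescaling from $\abs{\freevar}$ to $\noise\abs{\freevar}$ and the case-by-case matching with \eqref{eq:linfty-fitting-z}--\eqref{eq:linfty-fitting-nu}, which the paper's terse proof leaves implicit; these details check out.
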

\begin{proof}
    The claim about the graphical derivative follows from \cref{corollary:g-form-graphderiv} and \cref{lemma:one-norm-findim}, using the fact that $g(\FdVar)=|z|$ is finite-valued and Lipschitz continuous and hence normal.
    The regular coderivative formula follows from the more general \cref{prop:frechetcod-cone-operator} in the appendix. To derive the explicit form of the polar cone $\polar{\DerivConeF[\Dual|\coDual]}$, we employ the fact that $D[\subdiff F^*](\Dual|\coDual)(\dir\Dual)$ is non-empty if and only if 
    \begin{equation}
        \label{eq:linfty-fitting-constr}
        \abs{\coDual(x)}\le \noise
        \quad\text{and}\quad
        \Dual(x)\coDual(x)=\abs{\Dual(x)}.
        \qedhere
    \end{equation}
\end{proof}
\begin{remark}
    If $(\Dual,\coDual)$ satisfy the strict complementarity condition  $\Dual(x) \ne 0$ or $\abs{\coDual(x)}<\noise$ for a.\,e.~$x \in \Omega$, the degenerate second and third case in \eqref{eq:d-subdiff-1norm-1d} (corresponding to the gray areas in \cref{fig:1norm-1d}) do not occur, and the cone simplifies to
    \[
        \DerivConeF[\Dual|\coDual] \defeq
        \{ \dcVar \in L^2(\Omega) \mid \dcVar(x)=0 ~\text{if}~ \Dual(x)=0,\, x \in \Omega \}.
    \]
    Again, points $x\in\Omega$ where a degenerate case occurs are precisely those where graphical regularity fails to hold for $\partial f^*$ at $(\Dual(x), \coDual(x))$. 
\end{remark}

\subsubsection{Spatially varying integrands}

Let $\alpha, \beta \in L^2(\Omega)$ with $\alpha(x) < \beta(x)$ for a.\,e.~$x \in \Omega$.
Define
\[
    f(x, \FdVar) \defeq \ind_{[\alpha(x), \beta(x)]}(\FdVar)
    \qquad (x \in \Omega; \FdVar \in \R).
\]
This example is useful for spatially or temporally varying ``tube'' constraints, which arise in the regularization of inverse problems subject to variable noise levels~\cite{Dong:2011}. The indicator function of temporally variable constraints also appears in Moreau's sweeping process, which is a model for several phenomena from nonsmooth mechanics such as elastoplasticity~\cite{Kunze:2000}.

Due to the measurability of $\alpha$ and $\beta$, the integrand $f$ is proper, convex and normal \cite[Ex.~14.32]{Rockafellar:1998}, such that the subdifferential $\subdiff f(x,\freevar)$ can be computed pointwise. Furthermore, $f$ is a.\,e. proto-differentiable as the indicator function of the convex polyhedral set $[\alpha(x),\beta(x)]$; see again \cite[Ex.~13.17 \& Thm.~13.40]{Rockafellar:1998}.
By simple pointwise application of \cref{lemma:DsubdiffF} we can thus compute  $D[\subdiff f(x,\freevar)]$.
We therefore deduce the applicability of \cref{corollary:g-form-graphderiv} to
\[
    F(\Dual) = \int_\Omega f(x, \Dual(x)) \,d x,
\]
and obtain a pointwise characterization of $D(\subdiff F)$ similar to \cref{cor:g-form-indicator}.

Clearly, we can analogously modify \cref{cor:g-form-l2norm} (squared $L^2(\Omega;\R^m)$ norm) and \cref{cor:g-form-l1norm} ($L^1(\Omega;\R^m)$ norm) by, e.g., introducing a spatially varying weight in each norm.

\section{Stability of variational inclusions}\label{sec:stability_vi}

To pave the way towards studying the stability of saddle point systems in the following section, we now recall general concepts for the study of variational inclusions and develop general results that quickly specialize to saddle point systems in $L^2$.

\subsection{Metric regularity and the Mordukhovich criterion}

Our stability analysis is based on the following set-valued Lipschitz property \cite{Aubin:1990,Rockafellar:1998,Mordukhovich:2006}, also known as  the \emph{Aubin property} of $\inv \somesetmap$.

\begin{definition}
    We say that the set-valued mapping $\somesetmap: Q \setto W$ is \emph{metrically regular} at $\realopt w$ for $\realopt q$ if $\graph \somesetmap$ is locally closed and there exist $\rho, \delta, \lipnum > 0$ such that
    \begin{equation}
        \label{eq:inverse-aubin}
        \inf_{p\,:\, w \in \somesetmap(p)} \norm{q-p}
            \le \lipnum \norm{w-\somesetmap(q)}
        \quad\text{ for any $q,w$ such that }
         \norm{q-\realopt{q}} \le \delta,
         \,
         \norm{w-\realopt{w}} \le \rho
         .
    \end{equation}
    We denote the infimum over valid constants $\lipnum$ by $\lip{\inv \somesetmap}(\realopt w|\realoptq)$,
    or $\lip{\inv \somesetmap}$ for short when there is no ambiguity about the point $(\realopt w, \realoptq)$.
\end{definition}

A simplified view, indicating why this concept is useful, can be seen by taking $\realoptq$ satisfying $0 \in \somesetmap(\realoptq)$. Setting $q=\realoptq$ and $\realopt{w}=0$ in \eqref{eq:inverse-aubin}, we then obtain
\begin{equation}
    \label{eq:sensitivity1}
    \inf_{p\,:\, w \in \somesetmap(p)} \norm{\realoptq-p}
        \le \lipnum_{\inv \somesetmap}(0|\realoptq) \norm{w}
    \quad\text{ for any $w$ such that }
     \norm{w} \le \rho
     .
\end{equation}
Therefore, if we perturb the variational inclusion $0 \in \somesetmap(\realoptq)$ -- typically an optimality condition -- by a small linear perturbation $w$, we will still find a nearby solution to the perturbed problem. We will later see that for our problems of interest, we can encode variations in data and in an additional Moreau-Yosida regularization parameter into $w$. We therefore need to estimate $\lipnum_{\inv \somesetmap}$, for which 
the following \emph{Mordukhovich criterion} \cite{Mordukhovich:1992} will be useful.
It is also contained in \cite[Thm.~4.7]{Mordukhovich:2006} and 
simplified here to our Hilbert space setting from the original 
Asplund space setting.
\begin{theorem}
    \label{thm:ell}
    Let $\somesetmap: Q \setto W$ be a set-valued mapping between Hilbert spaces $Q$ and $W$. 
    Suppose $\graph \somesetmap$ is locally closed around $(q, w) \in \graph \somesetmap$.
    Then
    \[
        \lip{\somesetmap}(q|w)
        =\inf_{t>0} \sup \left\{
            \norm{\frechetCod \somesetmap(\alt{q}|\alt{w})} \,\middle|\,
                \alt{q} \in \B(q, t),\, \alt{w} \in \somesetmap(\alt{q}) \isect \B(w, t)
            \right\}.
    \]
\end{theorem}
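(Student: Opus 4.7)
The statement is the celebrated Mordukhovich criterion, here specialized from the Asplund-space result \cite[Thm.~4.7]{Mordukhovich:2006} to the Hilbert setting. The plan is to invoke the published proof essentially verbatim, noting that Hilbert spaces are reflexive Asplund spaces with inherently smooth norm, so all the fuzzy calculus tools (fuzzy sum rule, Ekeland's variational principle, subdifferentials of distance functions) apply directly. The proof naturally splits into the two inequalities that together give the stated equality. Denote the right-hand side by $M(q|w)$.

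For $\lip{\somesetmap}(q|w) \ge M(q|w)$, suppose $\somesetmap$ has the Aubin property at $(q,w)$ with modulus $\lipnum$. A standard neighborhood shrinking argument shows that the Aubin property holds, with modulus arbitrarily close to $\lipnum$, at every $(\alt q, \alt w) \in \graph \somesetmap$ in a sufficiently small ball $\B((q,w),t_0)$. Pick such an $(\alt q, \alt w)$ and any $(\dir q, -\dir w) \in \widehat N((\alt q, \alt w); \graph \somesetmap)$. For $u = \dir q/\norm{\dir q}$, set $q^\tau \defeq \alt q + \tau u$; the Aubin property yields $w^\tau \in \somesetmap(q^\tau)$ with $\norm{w^\tau - \alt w}/\tau \to \eta$ for some $\eta$ in the graphical derivative $D\somesetmap(\alt q|\alt w)(u)$ of norm at most $\lipnum$. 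The defining limsup of $\widehat N$ along $(q^\tau, w^\tau) \to (\alt q, \alt w)$ then forces $\tau \norm{\dir q} - \iprod{\dir w}{w^\tau - \alt w} \le o(\tau)$, and dividing by $\tau$ and applying Cauchy--Schwarz gives $\norm{\dir q} \le \lipnum \norm{\dir w}$. Thus $\norm{\frechetCod \somesetmap(\alt q|\alt w)} \le \lipnum$ on the $t_0$-ball, and the infimum over $t$ is bounded by $\lipnum$.

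For $\lip{\somesetmap}(q|w) \le M(q|w)$, assume $M \defeq M(q|w) < \infty$ (otherwise there is nothing to show). Given $\eps > 0$, pick $t > 0$ so that $\norm{\frechetCod\somesetmap(\alt q|\alt w)} \le M+\eps$ for all $(\alt q, \alt w) \in \graph\somesetmap \isect \B((q,w),t)$. The argument proceeds by contradiction: if the Aubin modulus at $(q,w)$ exceeded $M+2\eps$, there would exist sequences $q^i \to q$ and $w^i \to w$ with $w^i \in \somesetmap(q^i)$, together with points $\alt q^i$ close to $q^i$, violating the desired distance estimate for $\text{dist}(\alt w, \somesetmap(\alt q^i))$ by a definite margin. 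Applying Ekeland's variational principle to a suitable truncated penalty of the form $(p,v) \mapsto \norm{v - \alt w} + \eps \norm{(p,v) - (\alt q^i, \alt w^i)}$ over the locally closed set $\graph\somesetmap$ produces a near-minimizer $(p^i, v^i) \in \graph\somesetmap$; the Fréchet subdifferential optimality condition at this minimizer, combined with the fuzzy sum rule (clean in Hilbert space), yields a pair $(\dir q^i, -\dir w^i) \in \widehat N((p^i, v^i); \graph\somesetmap)$ with $\norm{\dir q^i} > (M+\eps)\norm{\dir w^i}$. Since $(p^i, v^i)$ lies in $\B((q,w),t)$ for $i$ large, this contradicts the choice of $t$.

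The technical obstacle is the second direction. The Ekeland truncation must be chosen so that (a) the near-minimizer remains inside the $t$-ball where the coderivative bound is known, (b) the graph is locally closed at that point (so Ekeland applies), and (c) the subgradient extracted from the penalty can be cleanly separated into a regular normal to $\graph\somesetmap$ versus an auxiliary component of controllable size. The first direction is essentially a direct calculation from the limsup defining the regular normal cone together with the existence of admissible test sequences provided by the Aubin property.
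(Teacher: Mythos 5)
Your proposal matches the paper exactly: the paper offers no proof of \cref{thm:ell} at all, but simply cites \cite[Thm.~4.7]{Mordukhovich:2006} and notes that the Asplund-space result specializes to the Hilbert setting, which is precisely what you do. Your sketch of the two inequalities (the direct normal-cone/test-sequence argument for one direction and the Ekeland-based contradiction for the other) is a faithful outline of the standard published proof, so nothing further is needed.
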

Here, for positively homogeneous $M: W \setto Q$, we have defined
\[
    \norm{M} \defeq \sup\{ \norm{q} \mid q \in M(w),\, \norm{w} \le 1\}.
\]

If $\somesetmap$ satisfies the regularity assumption $\frechetCod \somesetmap(q|w) = [D\somesetmap(q|w)]^{*+}$ (which is the case for pointwise mappings due to \cref{thm:p-graph-deriv}), we may translate \cref{thm:ell} to be expressed in terms of the graphical derivative $D \somesetmap$, where by the second equation in \eqref{eq:upper-adj-cod-tilde} it suffices to consider the convexification $\widetilde{D \somesetmap}$. This is the content of the next proposition.
\begin{proposition}
    \label{prop:lip-estim-regular}
    Let $\somesetmap: Q \setto W$ be a set-valued mapping between Hilbert spaces $Q$ and $W$. 
    Suppose $\graph \somesetmap$ is locally closed around $(q, w) \in \graph \somesetmap$ and 
    \begin{equation}
        \label{eq:somesetmap-cod-d-upper-adjoint}
        \frechetCod \somesetmap(q|w) = [D\somesetmap(q|w)]^{*+}.
    \end{equation}
    Then
    \begin{equation}
        \label{eq:lip-inv}
        \lip{\inv \somesetmap}(w|q)
        =
        \inf_{t>0} \sup \left\{
            \tildelip{\inv \somesetmap}(\alt{w}|\alt{q})
            \,\middle|\,
            \alt{w} \in \B(w, t),\, \alt{q} \in \B(q, t),\, \alt{w} \in \somesetmap(\alt{q})
        \right\},
    \end{equation}
    with
    \begin{equation}
        \label{eq:tildelip-inv-r-2}
            \tildelip{\inv\somesetmap}(\alt{w}|\alt{q})
            \defeq
            \sup\left\{
                \norm{\dir{w}}
                \,\middle|\,
                \begin{array}{l}
                    \dir{q} \in Q,\, \dir{w} \in W,\, \norm{\dir{q}} \le 1, \text{ satisfying }
                    \\
                    \iprod{\dir{q}}{\dir{\alt{q}}} \le \iprod{\dir{w}}{\dir{\alt{w}}}
                    ~\text{when }
                    \dir{\alt{w}} \in \widetilde{D\somesetmap}(\alt{q}|\alt{w})(\dir{\alt{q}})
                \end{array}
            \right\}.
    \end{equation}
\end{proposition}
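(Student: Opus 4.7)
The plan is to invoke the Mordukhovich criterion of Theorem~\ref{thm:ell} applied to $\inv\somesetmap$ (rather than $\somesetmap$), and then to translate the resulting quantity $\norm{\frechetCod\inv\somesetmap(\alt w|\alt q)}$ into the expression $\tildelip{\inv\somesetmap}(\alt w|\alt q)$ by unfolding the hypothesis \eqref{eq:somesetmap-cod-d-upper-adjoint} and using the polarity identity \eqref{eq:upper-adj-cod-tilde}.

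First, local closedness of $\graph\inv\somesetmap$ around $(w,q)$ is inherited from that of $\graph\somesetmap$ around $(q,w)$ via the linear coordinate swap $(q,w)\leftrightarrow(w,q)$. Applying Theorem~\ref{thm:ell} to $\inv\somesetmap$ and observing that $\alt q\in\inv\somesetmap(\alt w)$ is the same as $\alt w\in\somesetmap(\alt q)$, I obtain the outer $\inf_{t>0}\sup$ structure of \eqref{eq:lip-inv} with $\norm{\frechetCod\inv\somesetmap(\alt w|\alt q)}$ appearing in place of $\tildelip{\inv\somesetmap}(\alt w|\alt q)$. The task thus reduces to the pointwise identification $\norm{\frechetCod\inv\somesetmap(\alt w|\alt q)}=\tildelip{\inv\somesetmap}(\alt w|\alt q)$.

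For this identification, the coordinate-swap identity $\widehat N((\alt w,\alt q);\graph\inv\somesetmap)=\{(a,b):(b,a)\in\widehat N((\alt q,\alt w);\graph\somesetmap)\}$ combined with the definition \eqref{eq:frechetcod} gives
\[
    \dir w\in\frechetCod\inv\somesetmap(\alt w|\alt q)(\dir q)
    \iff
    -\dir q\in\frechetCod\somesetmap(\alt q|\alt w)(-\dir w).
\]
Invoking the hypothesis \eqref{eq:somesetmap-cod-d-upper-adjoint} and the definition of the upper adjoint, the right-hand side unfolds into the family of inequalities $\iprod{-\dir q}{\dir{\alt q}}\le \iprod{-\dir w}{\dir{\alt w}}$ for all $\dir{\alt w}\in D\somesetmap(\alt q|\alt w)(\dir{\alt q})$. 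I then perform the substitution $(\dir q,\dir w)\mapsto(-\dir q,-\dir w)$ inside the supremum defining $\norm{\frechetCod\inv\somesetmap(\alt w|\alt q)}$; this substitution preserves both the objective $\norm{\dir w}$ and the constraint $\norm{\dir q}\le 1$, while turning the inequality into $\iprod{\dir q}{\dir{\alt q}}\le \iprod{\dir w}{\dir{\alt w}}$. The second equality in \eqref{eq:upper-adj-cod-tilde} -- a purely polarity-based observation, independent of dimension -- then allows $D\somesetmap$ to be replaced by $\widetilde{D\somesetmap}$ in the constraint set, yielding precisely \eqref{eq:tildelip-inv-r-2}.

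The main subtlety I expect is the sign bookkeeping in passing between the coderivatives of $\somesetmap$ and $\inv\somesetmap$: the coordinate swap together with the asymmetric sign convention in \eqref{eq:frechetcod} is what forces the $(-\dir q,-\dir w)$ substitution, and it is this substitution that makes the inequality appear as $\le$ (matching \eqref{eq:tildelip-inv-r-2}) rather than as $\ge$. Everything else is a direct application of definitions and of results already established.
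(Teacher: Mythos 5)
Your proposal is correct and follows essentially the same route as the paper's proof: apply the Mordukhovich criterion (\cref{thm:ell}) to $\inv\somesetmap$, convert $\norm{\frechetCod\inv\somesetmap(\alt w|\alt q)}$ into a supremum over $\dir q\in\frechetCod\somesetmap(\alt q|\alt w)(\dir w)$ via the sign/swap relation, and then unfold using \eqref{eq:somesetmap-cod-d-upper-adjoint} together with the second equality in \eqref{eq:upper-adj-cod-tilde}. Your sign bookkeeping through the substitution $(\dir q,\dir w)\mapsto(-\dir q,-\dir w)$ matches the paper's computation exactly.
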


\begin{proof}
    From the \cref{def:frechetcod} of $\frechetCod \somesetmap(q|w)$ and $\frechetCod \inv \somesetmap(w|q)$ through $\widehat N((w,1); \graph \somesetmap)$, we observe from the definitions that
    \[
        \dir{w} \in \frechetCod \somesetmap(w|q)(\dir{q})
        \iff
        - \dir{q} \in \frechetCod \inv \somesetmap(q|w)(-\dir{w}).
    \]
    Applied to $\inv \somesetmap$, \cref{thm:ell} therefore gives
    \begin{equation}
        \label{eq:inv-s-lip-estimate-coderivative}
        \begin{aligned}[t]
            \lip{\inv \somesetmap}(w|q)
            &
            =\inf_{t>0} \sup \left\{
                \norm{\frechetCod \inv \somesetmap(\alt{w}|\alt{q})} \,\middle|\,
                    \alt{w} \in \B(w, t),\, \alt{q} \in \inv \somesetmap(\alt{w}) \isect \B(q, t)
                \right\}
            \\
            &
            =\inf_{t>0} \sup \left\{
                \norm{\inv{[\frechetCod \somesetmap(\alt{q}|\alt{w})]}} \,\middle|\,
                    \alt{w} \in \B(w, t),\, \alt{q} \in \B(q, t),\, \alt{w} \in \somesetmap(\alt{q})
                \right\}
            \\
            &
            =\inf_{t>0} \sup \left\{
                \norm{\dir{w}}
                \,\middle|\,
                \begin{array}{l}
                \dir{q} \in \frechetCod \somesetmap(\alt{q}|\alt{w})(\dir{w}),\,
                \norm{\dir{q}} \le 1,\,
                \\
                \alt{w} \in \B(w, t),\, \alt{q} \in \B(q, t),\, \alt{w} \in \somesetmap(\alt{q})
                \end{array}
            \right\}
            \\
            &
            =\inf_{t>0} \sup \left\{
                \tildelip{\inv \somesetmap}(\alt{w}|\alt{q})
                \,\middle|\,
                \alt{w} \in \B(w, t),\, \alt{q} \in \B(q, t),\, \alt{w} \in \somesetmap(\alt{q})
            \right\},
        \end{aligned}
    \end{equation}
    where
    \begin{equation}
        \label{eq:tilde-lip-inv-r}
        \tildelip{\inv \somesetmap}(\alt{w}|\alt{q})
        \defeq
        \sup
        \left\{
            \norm{\dir{w}}
            \,\middle|\,
            \dir{q} \in \frechetCod \somesetmap(\alt{q}|\alt{w})(\dir{w}),\,
            \norm{\dir{q}} \le 1
        \right\}.
    \end{equation}
    Referral to \eqref{eq:somesetmap-cod-d-upper-adjoint} and the fact that
    \[
         [D\somesetmap(q|w)]^{*+}= [\widetilde{D\somesetmap}(q|w)]^{*+},
    \]
    now establishes the claim with the expression \eqref{eq:tildelip-inv-r-2} for $\tildelip{\inv \somesetmap}(\alt{w}|\alt{q})$.
\end{proof}

\subsection{Graphical derivatives expressed with linear operators and cones}
\label{sec:linear-cone}
 
We now derive necessary and sufficient conditions for the Aubin property to hold for variational inclusions involving second-order set-valued derivatives of pointwise functionals. As seen in \cref{sec:pointwise:examples}, these commonly have the structure of a sum of a linear operator and a cone. In fact, for the following analysis, it suffices that the graphical derivatives merely contain such a sum in order to derive upper bounds; this will be important for treating discretization by projection in \cref{sec:discretization}.
We therefore assume therefore that $W=Q=L^2(\Omega; \R^N)$ and that
\begin{equation}
    \label{eq:linear-polar-form}
    \widetilde{D \somesetmap}(q|w)(\dir{q})
    \supset
    \begin{cases}
        T_q \dir{q} + \polar{\DerivCone[q|w]}, & \dir{q} \in \DerivCone[q|w], \\
        \emptyset, & \dir{q} \not\in \DerivCone[q|w],
    \end{cases}
\end{equation}
for some linear operator $T\defeq T_q: Q \to Q$, dependent on $q$ but not $w$, and a cone $\DerivCone \defeq \DerivCone[q|w] \subset Q$, dependent on both $q$ and $w$.
Here we recall from \eqref{eq:polar} that $\polar\DerivCone$ is the \emph{polar cone} of $\DerivCone$. Although it will not be needed in our analysis, an explicit characterization of the regular coderivatives of set-valued mappings satisfying \eqref{eq:linear-polar-form} (with equality) is derived in \cref{sec:coderivative} for completeness.

\enlargethispage{1cm}
Following the reasoning in \cite[Prop.~4.1]{Valkonen:2014}, we may, using the structural assumption \eqref{eq:linear-polar-form}, continue from \cref{prop:lip-estim-regular} to derive
\begin{equation}
    \label{eq:tildelip-inv-h-3}
    \begin{aligned}[t]
        \tildelip{\inv R}(w|q)
        &
        \le
        \sup\left\{
            \norm{\dir{w}}
            \,\middle|\,
            \begin{array}{l}
                \dir{q} \in Q,\, \dir{w} \in Q,\, \norm{\dir{q}} \le 1, 
                ~\text{satisfying}
                \\
                \qquad
                \iprod{\dir{w}}{T \dir{\alt{q}}+\dir{\alt{p}}} \le \iprod{\dir{q}}{\dir{\alt{q}}}
                \\
                \hfill\text{for }
                \dir{\alt{q}} \in \DerivCone,\, \dir{\alt{p}} \in \polar\DerivCone
            \end{array}
        \right\}
        \\[0.5em]
        &
        =
        \sup\left\{
            \norm{\dir{w}}
            \,\middle|\,
            \begin{array}{l}
                \dir{q} \in Q,\, \dir{w} \in \DerivCone,\, \norm{\dir{q}} \le 1, 
                ~\text{satisfying}
                \\
                \hfill
                \iprod{\dir{w}}{T \dir{\alt{q}}} \le \iprod{\dir{q}}{\dir{\alt{q}}}
                \text{ for }
                \dir{\alt{q}} \in \DerivCone
            \end{array}
        \right\}
        \\[0.5em]
        &
        =
        \sup \left\{
            \norm{\dir{w}}
            \,\middle|\,
            \begin{array}{l}
                \dir{q} \in Q,\, \dir{w} \in \DerivCone,\, \norm{\dir{q}} \le 1, 
                ~\text{satisfying}
                \\
                \hfill
                \iprod{T^* \dir{w}-\dir{q}}{\dir{\alt{q}}} \le 0
                \text{ for }
                \dir{\alt{q}} \in \DerivCone
            \end{array}
        \right\}
        \\[0.5em]
        &
        =
        \sup \left\{
            \norm{\dir{w}}
            \,\middle|\,
                \dir{q} \in Q,\, \dir{w} \in \DerivCone,\, \norm{\dir{q}} \le 1,\,
                T^* \dir{w}-\dir{q} \in \polar\DerivCone
        \right\}
        \\[.3em]
        &
        =
        \sup \left\{
            \norm{\dir{w}}
            \,\middle|\,
                \dir{w} \in \DerivCone,\, \inf_{\dcVar \in \polar\DerivCone} \norm{T^* \dir{w}-\dcVar} \le 1
        \right\}.
    \end{aligned}
\end{equation}
We illustrate this expression geometrically in \cref{fig:polar-tildelip}.
Observe also that if \eqref{eq:linear-polar-form} holds as an equality, then so does the first inequality in \eqref{eq:tildelip-inv-h-3}. That is, in this case
\[
    \tildelip{\inv R}(w|q)
        =
        \sup \left\{
            \norm{\dir{w}}
            \,\middle|\,
                \dir{w} \in \DerivCone,\, \inf_{\dcVar \in \polar\DerivCone} \norm{T^* \dir{w}-\dcVar} \le 1
        \right\}.
\]    

\begin{figure}
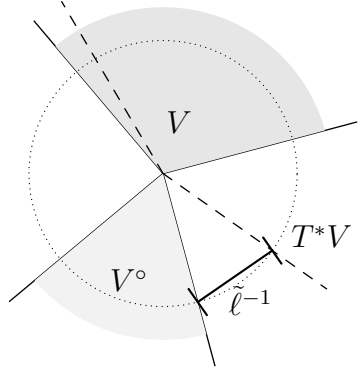

    \centering
    \asyinclude{polar.asy}
    \caption{A geometric illustration of the distance $\tildelip{\inv R}$ computed in \eqref{eq:tildelip-inv-h-3} for \eqref{eq:linear-polar-form}. The dashed line indicates the transformed cone $T^* \DerivCone$. Without loss of generality, we restrict $\dir{w}$ to lie on the unit sphere (dotted), in which case the distance between points on the unit sphere between the polar $\polar\DerivCone$ and $T^* \DerivCone$ gives the inverse Lipschitz constant.}
    \label{fig:polar-tildelip}
\end{figure}

\begin{remark}
    If $\DerivCone$ is a closed subspace, then $\polar \DerivCone=\DerivCone^\perp$, and  \eqref{eq:tildelip-inv-h-3} reduces to
    \begin{equation}
        \begin{aligned}[t]
            \tildelip{\inv R}(w|q)
            &
            \le
            \sup \left\{
                \norm{\dir{w}}
                \,\middle|\,
                \dir{w} \in \DerivCone,\, \norm{P_{\DerivCone} T^* \dir{w}} \le 1
            \right\}
            \\
            &
            =
            \sup \left\{
                \norm{P_{\DerivCone} \dir{w}}
                \,\middle|\,
                \dir{w} \in Q,\, \norm{P_{\DerivCone} T^* P_{\DerivCone} \dir{w}} \le 1
            \right\}.
        \end{aligned}
    \end{equation}
\end{remark}

We can use \eqref{eq:tildelip-inv-h-3} and the expansions above to estimate $\tildelip{\inv \somesetmap}(\alt{w}|\alt{q})$ for $\somesetmap=H_{\baseu}$ and $\somesetmap=\basesetmap$.
To study stability and metric regularity, we however still need to pass to
\[
        \lip{\inv R}(w|q)
        =
        \inf_{t>0} \sup \left\{
        \tildelip{\inv \somesetmap}(\alt{w}|\alt{q})
        \,\middle|\,
        \alt{w} \in \B(w, t),\, \alt{q} \in \B(q, t),\, \alt{w} \in \somesetmap(\alt{q})
    \right\}.
\]
This in essence involves a uniform $c>0$ in the condition
\[
    \inf_{\dcVar \in \polar{\DerivCone(\alt{q}|\alt{w})}} \norm{T_{\alt{q}}^* \dir{w}-\dcVar} \ge c\norm{\dir{w}}
    \qquad (\dir{w} \in \DerivCone(\alt{q}|\alt{w}))
\]
for all $(\alt{q}, \alt{w})$ close to $(q, w)$.

If we assume continuity of the mapping $\alt{q} \mapsto T_{\alt{q}}$, we can simplify this condition. The following lemma prepares the way for the stability analysis of saddle points in the next section (cf.~\eqref{eq:t-split} below).

\begin{lemma}
    \label{lemma:general-limit-polar-projection-lower-bound}
    Let $q, w \in Q=W=X \times Y$, and suppose that for $(\alt{q}, \alt{w})$ in a neighborhood $U$ of $(q, w)$, $\graph R \isect U$ is closed, \eqref{eq:somesetmap-cod-d-upper-adjoint} holds, and we have 
    \begin{equation}
        \label{eq:general-dr-expr}
        \widetilde{DR}(\alt{q}|\alt{w})(\dir{q}) \supset
        \begin{cases}
            T_{\alt{q}} \dir{q} + \polar{\DerivCone[\alt{q}|\alt{w}]}, & \dir{q} \in \DerivCone[\alt{q}|\alt{w}], \\
            \emptyset, & \dir{q} \not\in \DerivCone[\alt{q}|\alt{w}],
        \end{cases}
    \end{equation}
    for a cone $\DerivCone[\alt{q}|\alt{w}] \subset Y$.
    In addition to these structural assumptions, assume the continuity at $q$ of $\alt{q} \mapsto T_{\alt{q}}$, and for some $c>0$ the bound
    \begin{equation}
        \label{eq:grbound}
        \grbound(q|w; R) \defeq
            \sup_{t>0} \inf_{\substack{(\dir{w}, \dcVar) \in \DerivConeDXt[q|w]{t}{R}, \\ \dir{w} \ne 0}} 
            \frac{\norm{T_{q}^* \dir{w}-\dcVar}}{\norm{\dir{w}}}
            \ge c,
    \end{equation}
    where
    \begin{align}
        \label{eq:general-vt}
        \DerivConeDXt[q|w]{t}{R} & \defeq \Union\left\{
            \DerivCone[\alt{q}|\alt{w}] \times \polar{\DerivCone[\alt{q}|\alt{w}]}
            \,\middle|\,
            \alt{w} \in R(\alt{q}),\, \norm{\alt{q}-q} < t,\, \norm{\alt{w}-w} < t
            \right\} \subset Y^2.
    \end{align}
    Then
    \begin{equation}
        \label{eq:lip-inv-r-bound}
        \lip{\inv R}(w|q) \le \inv c.
    \end{equation}
    Moreover, if \eqref{eq:general-dr-expr} holds as an equality, then $\lipnum_{\inv R}(w|q) < \infty$  if and only if \( \grbound(q|w; R) > 0\).
\end{lemma}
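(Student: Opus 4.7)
The plan is to combine \cref{prop:lip-estim-regular} with the identity chain \eqref{eq:tildelip-inv-h-3} and then to use the continuity of $\alt q \mapsto T_{\alt q}$ to replace the variable operator $T_{\alt q}^*$ by the fixed $T_q^*$ that appears in the definition \eqref{eq:grbound} of $\grbound(q|w;R)$.

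Concretely, the local closedness of $\graph R$ together with \eqref{eq:somesetmap-cod-d-upper-adjoint} makes \cref{prop:lip-estim-regular} applicable, so
\[
    \lip{\inv R}(w|q) = \inf_{t > 0} \sup\left\{ \tildelip{\inv R}(\alt w|\alt q) \,\middle|\, \alt q \in \B(q, t),\, \alt w \in R(\alt q) \isect \B(w, t) \right\}.
\]
At each such $(\alt q, \alt w)$, the inclusion \eqref{eq:general-dr-expr} is of the form \eqref{eq:linear-polar-form}, and therefore the computation in \eqref{eq:tildelip-inv-h-3} applies and bounds
\[
    \tildelip{\inv R}(\alt w|\alt q) \le \sup\left\{ \norm{\dir w} \,\middle|\, \dir w \in \DerivCone[\alt q|\alt w],\, \inf_{\dcVar \in \polar{\DerivCone[\alt q|\alt w]}} \norm{T_{\alt q}^* \dir w - \dcVar} \le 1 \right\},
\]
which by the positive homogeneity of both cones equals $1/\mu(\alt q|\alt w)$, where
\[
    \mu(\alt q|\alt w) \defeq \inf_{\substack{\dir w \in \DerivCone[\alt q|\alt w] \setminus \{0\}\\ \dcVar \in \polar{\DerivCone[\alt q|\alt w]}}} \frac{\norm{T_{\alt q}^* \dir w - \dcVar}}{\norm{\dir w}}.
\]
Only the first $\le$ in \eqref{eq:tildelip-inv-h-3} is an inequality, and it becomes an equality if \eqref{eq:general-dr-expr} holds with equality.

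The continuity hypothesis now enters: given $\eps > 0$ and $\alt q$ close enough to $q$, the bound $\norm{(T_{\alt q}-T_q)^* \dir w} \le \eps \norm{\dir w}$ yields $\mu(\alt q|\alt w) \ge \mu_q(\alt q|\alt w) - \eps$, where $\mu_q$ denotes the analogue of $\mu$ with $T_q^*$ in place of $T_{\alt q}^*$. Taking the infimum over admissible $(\alt q, \alt w)$ in the $t$-neighborhood and recognising the resulting pairs $(\dir w, \dcVar)$ as elements of $\DerivConeDXt[q|w]{t}{R}$, the right-hand side becomes exactly $-\eps$ plus the inner infimum in \eqref{eq:grbound} at parameter $t$, which is monotone non-decreasing as $t \downto 0$ with limit $\grbound(q|w; R) \ge c$. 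Choosing $t$ small enough to validate the continuity bound and to approach $\grbound$ within $\eps$ gives $\sup \tildelip{\inv R}(\alt w|\alt q) \le 1/(c - 2\eps)$, and sending $\eps \downto 0$ establishes \eqref{eq:lip-inv-r-bound}.

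For the equivalence assertion, under equality in \eqref{eq:general-dr-expr} the first inequality in \eqref{eq:tildelip-inv-h-3} becomes an equality, so $\tildelip{\inv R}(\alt w|\alt q) = 1/\mu(\alt q|\alt w)$. The symmetric continuity estimate $\mu(\alt q|\alt w) \le \mu_q(\alt q|\alt w) + \eps$ provides the reverse comparison, and $\grbound(q|w; R) = 0$ then forces $\mu(\alt q|\alt w)$ to take arbitrarily small values in every $t$-neighborhood, making $\sup \tildelip{\inv R}(\alt w|\alt q) = \infty$ for every $t > 0$ and hence $\lip{\inv R}(w|q) = \infty$; the reverse implication is just \eqref{eq:lip-inv-r-bound}. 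The main obstacle is the bookkeeping in juggling the two quantifier orderings, namely the infimum over the approaching pairs $(\alt q, \alt w)$ versus the infimum over the directions $(\dir w, \dcVar)$, while the operator $T$ varies with $\alt q$; the continuity hypothesis, together with the monotonicity of $\DerivConeDXt[q|w]{t}{R}$ in $t$, is precisely what allows this exchange to go through with only an $\eps$-loss that can be driven to zero.
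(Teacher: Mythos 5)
Your argument follows the paper's own proof essentially step for step: both directions run through \cref{prop:lip-estim-regular} and the chain \eqref{eq:tildelip-inv-h-3}, use the continuity of $\alt{q} \mapsto T_{\alt{q}}$ to trade $T_{\alt{q}}^*$ for $T_q^*$ at an $\eps$-loss, and exploit the inclusion of each $\DerivCone[\alt{q}|\alt{w}] \times \polar{\DerivCone[\alt{q}|\alt{w}]}$ into $\DerivConeDXt[q|w]{t}{R}$ together with the monotonicity in $t$. The only cosmetic overstatement is the claim that for \emph{fixed} $t$ the supremum of $\tildelip{\inv R}$ is infinite -- it is only bounded below by the reciprocal of $\delta$ plus the continuity modulus of $T$ over the $t$-ball -- but since that modulus vanishes as $t \downto 0$ the conclusion $\lip{\inv R}(w|q)=\infty$ still follows, and the paper's own converse argument is no more careful on this point.
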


\begin{proof}
    Suppose \eqref{eq:grbound} holds, and pick $c_1 \in (0, c)$.
    Whenever $t>0$ is small enough and $\alt{w}$ and $\alt{q}$ satisfy
    \[
        \alt{w} \in R(\alt{q}),\quad \norm{q-\alt{q}}<t\quad\text{and}\quad \norm{w-\alt{w}}<t,
    \]
    the bound \eqref{eq:grbound}, the continuity of $\alt{q} \mapsto T_{\alt{q}}$, and the inclusion
    \[
        \DerivCone(\alt{q}|\alt{w}) \times \polar{\DerivCone(\alt{q}|\alt{w})} \subset \DerivConeDXt[q|w]{t}{R},
    \]
    guarantee the estimate
    \[
        \norm{T_{\alt{q}} \dir{w} - \dcVar} \ge c_1 \norm{\dir{w}}
        \qquad (\dir{w} \in \DerivCone(\alt{q}|\alt{w}),\, \dcVar \in \polar{\DerivCone(\alt{q}|\alt{w})} ).
    \]
    The latter says that
    \[
        \tildelip{\inv R}(\alt{w}|\alt{q})  \le \inv c_1.
    \]
    By \eqref{eq:tildelip-inv-h-3} and \eqref{eq:inv-s-lip-estimate-coderivative}, therefore
    \[
        \lip{\inv R}(\alt{w}|\alt{q}) = \sup\{
            \tildelip{\inv R}(\alt{w}|\alt{q}) 
            \mid
            \alt{w} \in R(\alt{q}),\, \norm{q-\alt{q}}<t,\, \norm{w-\alt{w}}<t
            \}
        \le \inv c_1.
    \]
    Since $c_1 \in (0, c)$ was arbitrary, this proves \eqref{eq:lip-inv-r-bound}.

    If \eqref{eq:grbound} does not hold, and \eqref{eq:general-dr-expr} holds as an equality, we can, given $\eps>0$, find for every $t>0$ a pair
    $(\dir{w}, \dcVar) \in \DerivConeDXt[q|w]{t}{R} \setminus \{0\} \times Y$, such that
    \[
        \norm{T_{q}^* \dir{w}-\dcVar}\le\eps\norm{\dir{w}}.
    \]
    Thus, by the definition of $\DerivConeDXt[q|w]{t}{R}$, we can also find $\alt{q}$ and $\alt{w}$ satisfying
    \[
        \alt{w} \in R(\alt{q}),\quad \norm{q-\alt{q}}<t\quad\text{and}\quad \norm{w-\alt{w}}<t
    \]
    such that
    \[
        \dir{w} \in \DerivCone(\alt{q}|\alt{w})\quad\text{and}\quad \dcVar \in \polar{\DerivCone(\alt{q}|\alt{w})}.
    \]
    Recalling \eqref{eq:tildelip-inv-h-3}, which holds as an equality under the present assumption that \eqref{eq:general-dr-expr} holds as an equality, this implies that
    \[
        \tildelip{\inv R}(\alt{w}|\alt{q}) \ge \inv\eps.
    \]
    Since $t>0$ was arbitrary, we have as well that
    \[
        \lip{\inv R}(w|q) \ge \inv\eps.
    \]
    Finally, since $\eps>0$ was arbitrary, it follows that $\lipnum_{\inv R}(w|q) = \infty$  if \eqref{eq:grbound} does not hold.
\end{proof}

\section{Stability of non-linear saddle point systems}\label{sec:stability_sp}

We now apply the results of the preceding section to saddle points characterizing minimizers of nonsmooth optimization problems of the form \eqref{eq:prob_convex}. In particular, we assume that
\begin{equation}
    \label{eq:pde-f-choice}
    F^*(\Dual)=\int_\Omega f^*(\Dual(x)) \,d x
\end{equation}
for a proper, convex, lower semicontinuous $f^*$ and, motivated by the problems considered in the next section,
\begin{equation}
    \label{eq:pde-g-choice}
    G(u)=\int_\Omega g(u(x)) \,d x
    \quad
    \text{for}
    \quad
    g(\FdVar) = \frac{\alpha}{2} \abs{z}^2.
\end{equation}

\subsection{Non-linear saddle point systems as variational inclusions}
\label{sec:saddle-vi}

We first write the first-order optimality conditions \eqref{eq:oc} for the problem \eqref{eq:prob_convex} as an inclusion for a set-valued mapping and compute its derivative.

For $\realoptq=(\realopt\Primal,\realopt\Dual)$ to be a saddle point of \eqref{eq:lagrangian}, the Lagrangian $L$ has to satisfy
\[
    L(\realopt\Primal, \Dual)
    \le
    L(\realopt\Primal, \realopt\Dual)
    \le
    L(\Primal, \realopt\Dual)
    \qquad (\Primal \in X,\, \Dual \in Y).
\]
Since $-L(u, \freevar)$ is convex, proper, and lower semicontinuous for any $u \in X$, we deduce from the necessary and sufficient first-order optimality condition $0 \in \subdiff(-L(u, \freevar))(\realopt\Dual)$ for convex functions together with the sum rule \cite[Prop.~5.6]{Ekeland:1999} that $K(\realopt\Primal) \in \subdiff F^*(\realopt\Dual)$. 
We also see that
\[
    \realopt\Primal \in \argmin_\Primal~ G(u)+ \iprod{K(\Primal)}{\realopt\Dual}.
\]
Since $G$ is convex and $K \in C^1(X; Y)$, we can apply the calculus of Clarke's generalized derivative (which reduces to the Fréchet derivative and convex subdifferential for differentiable and convex functions, respectively; see, e.g., \cite[Chap.~2.3]{Clarke}) to deduce the overall system of critical point conditions
\begin{equation}
    \label{eq:oc-v2}
    \left\{ \begin{aligned}
    K(\realoptu) &\in \subdiff F^*(\realoptpsi),\\
    - [\grad K(\realoptu)]^* \realoptpsi &\in \subdiff G(\realoptu).
    \end{aligned} \right.
\end{equation}
This may be rewritten concisely as
\begin{equation}
    \label{eq:oc-h}
    0 \in H_{\realoptu}(\realoptq)
\end{equation}
for the monotone operator
\begin{equation}
    \label{eq:h-def}
    H_\baseu(u, \Dual) \defeq
        \begin{pmatrix}
            \subdiff G(u) + \grad K(\baseu)^* \Dual \\
            \subdiff F^*(\Dual) -\grad K(\baseu) u - c_\baseu
        \end{pmatrix},
    \quad
    \text{where}
    \quad
    c_\baseu \defeq K(\baseu)-\grad K(\baseu)\baseu.
\end{equation}
This is defined at an arbitrary base point $\baseu \in X$ for the linearization of $K$.
Here and generally we use the notation
\begin{equation}
    \label{eq:qw}
    q=(\Primal, \Dual) \in X \times Y
    \quad\text{and}\quad
    w=(\coPrimal, \coDual) \in X \times Y
\end{equation}
for combining (primal, dual) and (co-primal, co-dual) variable pairs, respectively. This nomenclature stems from $\Dual$ being the dual variable in the original saddle-point problem, whereas the co-primal and co-dual variables generally satisfy $w \in H_\baseu(q)$.

Alternatively, we may rewrite the critical point conditions \eqref{eq:oc-v2} as
\begin{equation}
    \label{eq:oc-r}
    0 \in \basesetmap(\realoptq)
\end{equation}
for
\begin{equation}
    \label{eq:r0-def}
    \basesetmap(u, \Dual) \defeq H_u(u, \Dual) = 
        \begin{pmatrix}
            \subdiff G(u) + \grad K(u)^* \Dual \\
            \subdiff F^*(\Dual) -K(u)
        \end{pmatrix}.
\end{equation}
The mapping $\basesetmap$ will be useful for general stability analysis, while $H_\baseu$ is critical for the primal-dual algorithm of \cite{Valkonen:2014}.

We can prove the following about these mappings.

\begin{proposition}
    \label{prop:dh}
    Let $G: X = L^2(\Omega; \R^m) \to (-\infty,\infty]$ and $F^*: Y = L^2(\Omega; \R^n) \to (-\infty,\infty]$ have the form \eqref{eq:g-pointwise-integral} for some regular integrands $g$ and $f^*$, respectively. 
    Let $K \in C^1(X; Y)$. Then $\graph H_\baseu$ is locally closed, and
    \begin{equation}
        \label{eq:dh}
        D H_\baseu(q|w)(\dir{q})
        =
        \begin{pmatrix}
            D{[\subdiff G]}(u|\coPrimal - \grad K(\baseu)^* \Dual)(\dir{u}) + \grad K(\baseu)^* \dir\Dual \\
            D{[\subdiff F^*]}(\Dual|\coDual + \grad K(\baseu)u + c_\baseu)(\dir\Dual) - \grad K(\baseu) \dir{u} \\
        \end{pmatrix},
    \end{equation}
    with $D[\subdiff G]$ and $D[\subdiff F^*]$ given by \eqref{eq:g-d-form}.
    Moreover \eqref{eq:somesetmap-cod-d-upper-adjoint} holds, i.e.,
    \begin{equation}
        \frechetCod H_\baseu(q|w) = [DH_\baseu(q|w)]^{*+}.
    \end{equation}
\end{proposition}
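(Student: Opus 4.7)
The plan is to write $H_{\baseu} = P + h$ as the sum of a pointwise set-valued mapping $P$ (to which the results of \cref{sec:second-deriv,sec:findimco} apply) and a smooth (in fact affine) perturbation $h$, and then invoke the calculus rules developed above.

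Concretely, I would set
\[
    P(u,\Dual) \defeq \begin{pmatrix} \subdiff G(u) \\ \subdiff F^*(\Dual) \end{pmatrix},
    \qquad
    h(u,\Dual) \defeq \begin{pmatrix} \grad K(\baseu)^* \Dual \\ -\grad K(\baseu) u - c_\baseu \end{pmatrix},
\]
so that $H_\baseu = P + h$. The map $P$ is of the pointwise form considered in \cref{corollary:p-form} (applied to the integrand $(u(x),\Dual(x)) \mapsto \subdiff g(x,u(x)) \times \subdiff f^*(x,\Dual(x))$, which is proto-differentiable because the factors are, by the regularity of $g$ and $f^*$). The map $h$ is affine and bounded on the Hilbert space $X \times Y$, hence Fréchet differentiable with
\[
    \grad h(u,\Dual)(\dir u,\dir\Dual) = \begin{pmatrix} \grad K(\baseu)^* \dir \Dual \\ -\grad K(\baseu) \dir u \end{pmatrix}.
\]

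For local closedness of $\graph H_\baseu$, note first that since $G$ and $F^*$ are convex, proper and lower semicontinuous on Hilbert spaces, $\graph \subdiff G$ and $\graph \subdiff F^*$ are closed, so $\graph P$ is closed. Since the map $(q,w) \mapsto (q, w - h(q))$ is a homeomorphism on $(X \times Y)^2$ carrying $\graph H_\baseu$ onto $\graph P$, the former is closed as well.

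For the graphical derivative formula \eqref{eq:dh}, I would apply the sum rule \eqref{eq:d-p-plus-smooth} from \cref{cor:cod-p-plus-smooth}:
\[
    D(P+h)(q|w)(\dir q) = DP(q|w - h(q))(\dir q) + \grad h(q)\dir q.
\]
Computing $w - h(q) = (\coPrimal - \grad K(\baseu)^*\Dual,\; \coDual + \grad K(\baseu)u + c_\baseu)$, expanding $DP$ coordinate-wise via \cref{corollary:g-form-graphderiv} (which yields $D[\subdiff G]$ and $D[\subdiff F^*]$ in the pointwise form \eqref{eq:g-d-form}), and inserting $\grad h(q)\dir q$ gives exactly \eqref{eq:dh}.

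Finally, the coderivative identity $\frechetCod H_\baseu(q|w) = [DH_\baseu(q|w)]^{*+}$ follows by a direct appeal to \cref{cor:d-p-plus-smooth}: its hypotheses are verified because $P$ is a pointwise set-valued mapping of the form treated in \cref{thm:p-graph-deriv} (with local closedness of $\graph \subdiff g(x,\freevar)$ and $\graph \subdiff f^*(x,\freevar)$ following from lower semicontinuity of $g$ and $f^*$), and $h$ is single-valued and Fréchet differentiable. I do not anticipate any substantial obstacle: the work has already been done in \cref{corollary:p-form}, \cref{thm:p-graph-deriv} and \cref{cor:d-p-plus-smooth}, and the proof is essentially a matter of bookkeeping the affine shift introduced by the linearization base point $\baseu$.
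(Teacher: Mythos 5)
Your proposal is correct and follows essentially the same route as the paper: the identical decomposition $H_\baseu = P + h$ with $P(u,\Dual) = (\subdiff G(u), \subdiff F^*(\Dual))$ and the affine perturbation $h$ built from $\grad K(\baseu)$, followed by an appeal to \cref{cor:cod-p-plus-smooth} for \eqref{eq:dh} and to \cref{cor:d-p-plus-smooth} for the upper-adjoint identity. Your homeomorphism argument for local closedness is a slightly more explicit rendering of what the paper dismisses as immediate, and is fine.
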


\begin{proof}
    That $\graph H_\baseu$ is locally closed is an immediate consequence of the lower semicontinuity of the convex functionals $G$ and $F^*$ and the continuity of $\grad K$.
    The expression \eqref{eq:dh} is an immediate consequence of \cref{cor:d-p-plus-smooth}, where we set
    \[
        h(u,\Dual) \defeq
            \begin{pmatrix}
                \grad K(\baseu)^*\Dual \\
                -\grad K(\baseu) u - c_\baseu
            \end{pmatrix}
        \quad\text{and}\quad
        P(u, \Dual) \defeq
            \begin{pmatrix}
                \subdiff G(u) \\
                \subdiff F^*(\Dual) \\
            \end{pmatrix},
    \]
    and observe that $h$ is not only smooth but linear with
    \[
        \begin{split} % workaround for autonum bug
            \grad h(u, \Dual)= 
            \begin{pmatrix}
                0 & \grad K(\baseu)^* \\
                -\grad K(\baseu)u & 0
            \end{pmatrix}.
            \qedhere % needs to be inside split
        \end{split}
    \]
\end{proof}

\begin{proposition}
    \label{prop:dr0}
    Let $G: X = L^2(\Omega; \R^m) \to (-\infty,\infty]$ and $F^*: Y = L^2(\Omega; \R^n) \to (-\infty,\infty]$ have the form \eqref{eq:g-pointwise-integral} for some regular integrands $g$ and $f^*$, respectively. 
    Let $K \in C^2(X; Y)$. Then $\graph \basesetmap$ is locally closed, and
    \begin{equation}
        \label{eq:dr0}
        D \basesetmap(q|w)(\dir{q})
        =
        \begin{pmatrix}
            D{[\subdiff G]}(u|\coPrimal - \grad K(\baseu)^* \Dual)(\dir{u}) + \grad_u [\grad K(u)^*\Dual]\dir\Primal + \grad K(u)^* \dir\Dual \\
            D{[\subdiff F^*]}(\Dual|\coDual + K(u))(\dir\Dual) - \grad K(u) \dir{u} \\
        \end{pmatrix},
    \end{equation}
    with $D[\subdiff G]$ and $D[\subdiff F^*]$ given by \eqref{eq:g-d-form}.
    Moreover \eqref{eq:somesetmap-cod-d-upper-adjoint} holds, i.e.,
    \begin{equation}
        \frechetCod \basesetmap(q|w) = [D\basesetmap(q|w)]^{*+}.
    \end{equation}
\end{proposition}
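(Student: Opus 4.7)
The plan closely mirrors the proof of \cref{prop:dh}; the only essential new ingredient is that the nonlinear operator $K$ now appears evaluated at the variable $u$ rather than at a fixed base point $\baseu$, which is exactly what forces the strengthened regularity hypothesis $K \in C^2(X;Y)$.

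First I would write $\basesetmap = P + h$, where
\[
    P(u,\Dual) \defeq \begin{pmatrix} \subdiff G(u) \\ \subdiff F^*(\Dual) \end{pmatrix},
    \qquad
    h(u,\Dual) \defeq \begin{pmatrix} \grad K(u)^*\Dual \\ -K(u) \end{pmatrix}.
\]
The set-valued part $P$ is a pointwise operator of the type treated in \cref{corollary:p-form} and \cref{thm:p-graph-deriv}, using the regular integrands $g$ and $f^*$; in particular, the graph of each pointwise subdifferential $\subdiff g(x,\freevar)$, $\subdiff f^*(x,\freevar)$ is locally closed by lower semicontinuity. Combined with continuity of $K$ and $\grad K$ (which makes $h$ continuous), this yields that $\graph \basesetmap$ is locally closed.

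The crucial step is to verify that $h: X \times Y \to X \times Y$ is single-valued and Fréchet differentiable, so that \cref{cor:d-p-plus-smooth} and \cref{cor:cod-p-plus-smooth} apply to $P + h$. Single-valuedness is clear. For differentiability, the component $(u,\Dual) \mapsto -K(u)$ is $C^1$ by hypothesis. The other component $(u,\Dual) \mapsto \grad K(u)^*\Dual$ is linear in $\Dual$ (with partial derivative $\grad K(u)^*$), and its partial derivative with respect to $u$ is $\grad_u[\grad K(u)^*\Dual]$, which exists and depends continuously on $u$ precisely because $K \in C^2(X;Y)$ means that $u \mapsto \grad K(u)$ is itself a $C^1$ map into the space of bounded linear operators. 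This is the step where the $C^2$ assumption is essential, as opposed to only $C^1$ in \cref{prop:dh} where $K$ was linearized at the fixed $\baseu$. The resulting total derivative reads
\[
    \grad h(u,\Dual)(\dir u,\dir\Dual) = \begin{pmatrix} \grad_u[\grad K(u)^*\Dual]\dir u + \grad K(u)^*\dir\Dual \\ -\grad K(u)\dir u \end{pmatrix}.
\]

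With this in hand, the graphical derivative formula \eqref{eq:dr0} follows directly from \eqref{eq:d-p-plus-smooth} in \cref{cor:d-p-plus-smooth}, where the shift $w - h(q)$ produces the co-primal argument $\coPrimal - \grad K(u)^*\Dual$ and the co-dual argument $\coDual + K(u)$; the pointwise expression \eqref{eq:g-d-form} for $D[\subdiff G]$ and $D[\subdiff F^*]$ then comes from \cref{corollary:g-form-graphderiv}. Finally, the upper-adjoint identity $\frechetCod \basesetmap(q|w) = [D\basesetmap(q|w)]^{*+}$ is an immediate application of \cref{cor:d-p-plus-smooth}, whose hypotheses on $P$ (pointwise, proto-differentiable, locally closed graph) are satisfied here. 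The main obstacle, as noted, is the justification of Fréchet differentiability of $u \mapsto \grad K(u)^*\Dual$; everything else is routine bookkeeping to match the notation of \eqref{eq:dr0}.
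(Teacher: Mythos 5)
Your proposal is correct and follows essentially the same route as the paper: the identical decomposition $\basesetmap = P + h$ with $h(u,\Dual) = (\grad K(u)^*\Dual,\, -K(u))$, the same appeal to \cref{cor:cod-p-plus-smooth} and \cref{cor:d-p-plus-smooth}, and the same computation of $\grad h(u,\Dual)$ using $K \in C^2(X;Y)$ to differentiate $u \mapsto \grad K(u)^*\Dual$. Your explicit remark on why the $C^2$ hypothesis is needed here (versus $C^1$ in \cref{prop:dh}) is a correct and slightly more detailed justification of the one point the paper only notes in passing.
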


\begin{proof}
    Again, the fact that $\graph \basesetmap$ is locally closed is an immediate consequence of the lower semicontinuity of the convex functionals $G$ and $F^*$ and the continuity of $\grad K$. The expression \eqref{eq:dr0} is also again an immediate consequence of \cref{cor:d-p-plus-smooth}, where we set
    \[
        h_{0}(u,\Dual) \defeq
            \begin{pmatrix}
                \grad K(u)^*\Dual \\
                -K(u)
            \end{pmatrix}
        \quad\text{and}\quad
        P(u, \Dual) \defeq
            \begin{pmatrix}
                \subdiff G(u) \\
                \subdiff F^*(\Dual) \\
            \end{pmatrix},
    \]
    and observe that
    \[
        \grad h_0(u,\Dual)
        =\begin{pmatrix}
            \grad_u [\grad K(u)^*\Dual] & \grad K(u)^* \\
            -\grad K(u) & 0
        \end{pmatrix}, 
    \]
    where we denote $\grad_u [\grad K(u)^*\Dual] \defeq \grad(\tilde u \mapsto [\grad K(\tilde u)^*\Dual])(u)$, using the assumption that $K$ is twice differentiable.
\end{proof}

\begin{remark}
    Observe from \eqref{eq:dh} and \eqref{eq:dr0} that if $\baseu=u$, 
    \[
        D \basesetmap(q|w)=D H_u(q|w)+\begin{pmatrix} \grad_u [\grad K(u)^*\Dual]\dir\Primal \\ 0 \end{pmatrix}.
    \]
    Comparing \eqref{eq:h-def} and \eqref{eq:r0-def} also shows that in this case $\basesetmap(q)=H_u(q)$.
\end{remark}

Recalling \eqref{eq:inverse-aubin} and \eqref{eq:sensitivity1}, as well as \cref{prop:lip-estim-regular}, we see that in order to analyze the stability of \eqref{eq:oc}, resp.~\eqref{eq:oc-h}, we have to compute $\tildelip{\inv \basesetmap}(\alt{w}|\alt{q})$ in a neighborhood of $(\realoptq, 0)$. We will later see that this will be necessary both for $\baseu=\realoptu$ and $\baseu=\alt{u}$. 

\subsection{Lipschitz estimates for saddle points}

We now derive sufficient conditions for the Aubin property to hold for saddle points of \eqref{eq:oc-v2}. We proceed in several steps. First, we observe that provided that
if both $\widetilde{D[\subdiff G]}$ and $\widetilde{D[\subdiff F^*]}$ have individually the form \eqref{eq:linear-polar-form}, then the convexified graphical derivative $\compositeaccents{\widetilde}{D{H_{\bar u}}}(q|w)(\dir{q})$ also has the form \eqref{eq:linear-polar-form}. More precisely
\begin{equation}
    \label{eq:t-split}
    T_q = 
    \begin{pmatrix}
        \bar G_q & \bar K_\baseu^* \\
         - \bar K_\baseu & \bar F_q
    \end{pmatrix}
\end{equation}
for some linear operators $\bar G_q:X \to X$ and $\bar F_q: Y \to Y$ and $\bar K_\baseu=\grad K(\baseu)$, as well as the cone
\[
    \DerivCone[q|w]= \DerivConeG[\Primal|\coPrimal-\bar K_\baseu^* \Dual] \times \DerivConeF[\Dual|\coDual + \bar K_\baseu u + c_\baseu] \subset X \times Y.
\]
Since $G$ is assumed to be quadratic, we have $\DerivConeG[\Primal|\alt{\coPrimal}] \equiv X$, which gives the more specific structure
\begin{align}
    D [\subdiff G](u|\alt{\coPrimal})(\dir{u}) &= \bar G_q \dir{u}
    \\
    \intertext{and}
    \widetilde{D [\subdiff F^*]}(\Dual|\alt{\coDual})(\dir\Dual) &= 
    \begin{cases}
        \bar F_q \dir\Dual + \polar{\DerivConeF[\Dual|\alt{\coDual}]}, & \dir\Dual \in \DerivConeF[\Dual|\alt{\coDual}], \\
        \emptyset, & \dir\Dual \not\in \DerivConeF[\Dual|\alt{\coDual}].
    \end{cases}
    \label{eq:subspace-linear-structure}
\end{align}
We make, of course, the implicit assumption that $\alt{\coPrimal} \in \subdiff G(u)$ and $\alt{\coDual} \in \subdiff F^*(\Dual)$; if this does not hold, then the respective graphical derivatives are empty.

As we will see, it is difficult in general to guarantee the Aubin property. One way of doing so is to consider a Moreau--Yosida regularization of $F$, that is to replace $F^*$ by
\[
    F^*_\gamma(\Dual) \defeq F^*(\Dual) + \frac{\gamma}{2}\norm{\Dual}^2
\]
for some parameter $\gamma>0$; see, e.g., \cite[Chap.~12.4]{Bauschke:2011}.
The regular coderivative of the regularized subdifferential satisfies at least at non-degenerate points for some cone $\DerivConeF[\Dual|\coDual]$ the expression
\begin{equation}
    \label{eq:fstar-polar-form-huber}
    \widetilde{D{[\subdiff F_\gamma^*]}}(\Dual|\coDual)(\dir{\Dual})
    =
    \begin{cases}
        \gamma \dir{\Dual} + \polar{\DerivConeF[\Dual|\coDual]}, & \dir{\Dual} \in {\DerivConeF[\Dual|\coDual]}, \\
        \emptyset, & \dir{\Dual} \not\in {\DerivConeF[\Dual|\coDual]}.
    \end{cases}
\end{equation}
We denote the corresponding operator $H_\realoptu$ by $H_{\gamma,\realoptu}$.

From \cref{prop:dr0}, we observe that $\widetilde{DR_0}(q|w)(\dir{q})$ also has the form \eqref{eq:linear-polar-form} with \eqref{eq:t-split}, albeit with a different term $\bar K_\baseu$ and with $\bar G_q$ including the second-order term $\grad_u [\grad K(u)^*\Dual]$ from $K$.

\bigskip

We now specialize the results of \cref{sec:stability_vi} to the specific setting considered in this section. We therefore assume that $\bar F_q=\gamma I$ for some $\gamma \ge 0$ and that $\DerivConeG = X$. For the statement of the next lemma, we drop many of the subscripts and denote for short $T \defeq T_q$, $\bar K \defeq \bar K_\baseu$, $\bar G \defeq \bar G_q$, and $\tilde \DerivCone \defeq \DerivConeF[\Dual|\coDual]$.

\begin{lemma}
    \label{lemma:general-polar-projection-lower-bound}
    Let $\DerivCone = X \times \tilde\DerivCone \subset X \times Y$ be a cone, and let
    $\bar G: X \to X$ and $\bar K: X \to Y$ be bounded linear operators. For $\gamma \ge 0$, define 
    \begin{equation}
        T \defeq 
        \begin{pmatrix}
            \bar G & \bar K^* \\
             - \bar K & \gamma I
        \end{pmatrix}.
    \end{equation}
    Suppose $\bar G$ is self-adjoint and positive definite, i.e., there exists $c_G>0$ such that
    \begin{equation}
        \label{eq:general-c_g}
        \iprod{\bar G\coPrimal}{\coPrimal} \ge c_G^2 \norm{\coPrimal}^2 \qquad (\coPrimal \in X).
    \end{equation}
    Then, there exists $c>0$ such that
    \begin{equation}
        \label{eq:general-polar-projection-lower-bound}
        \inf_{\dcVar \in \polar\DerivCone} \norm{T^* w - \dcVar}^2 \ge c\norm{w}^2 \qquad (w \in \DerivCone)
    \end{equation}
    if and only if either of the following conditions hold:
    \begin{enumerate}[label=(\roman*)]
        \item\label{item:general-polar-projection-lower-bound-i}
            $\gamma>0$, 
            in which case $c=c(\gamma, c_G)$;
        \item\label{item:general-polar-projection-lower-bound-ii} 
            there exists $c_{K,\DerivCone}>$ such that
            \begin{equation}\displayindent0pt \displaywidth\columnwidth
                \label{eq:ckv-0}
                \inf_{\nu \in \polar{\tilde\DerivCone}} \norm{\bar K \inv{\bar G} \bar K^* \coDual - \nu}^2 \ge c_{K,\DerivCone} \norm{\coDual}^2
                \qquad (\coDual \in \tilde\DerivCone),
            \end{equation}
            in which case $c=c(\norm{\bar K}, c_G, c_{K,\DerivCone}) \le c_{K, \DerivCone}$.
    \end{enumerate}
\end{lemma}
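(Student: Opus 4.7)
The plan is to exploit the explicit block structure of $T$ and $\polar\DerivCone$, change to a more convenient variable that diagonalizes the first component, and then split into two cases according to which of two competing terms dominates.

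\emph{Setup.} Write $w=(\coPrimal,\coDual)\in X\times\tilde\DerivCone$. Since $\bar G$ is self-adjoint we have
\[
    T^* w = \bigl(\bar G\coPrimal-\bar K^*\coDual,\ \bar K\coPrimal+\gamma\coDual\bigr),
\]
and $\polar\DerivCone = \polar X\times\polar{\tilde\DerivCone} = \{0\}\times\polar{\tilde\DerivCone}$. Hence
\[
    \inf_{\dcVar\in\polar\DerivCone}\norm{T^*w-\dcVar}^2
    = \norm{\bar G\coPrimal-\bar K^*\coDual}^2
      + \inf_{\nu\in\polar{\tilde\DerivCone}}\norm{\bar K\coPrimal+\gamma\coDual-\nu}^2.
\]
Introduce $r\defeq \coPrimal-\inv{\bar G}\bar K^*\coDual$, so that $\bar G\coPrimal-\bar K^*\coDual = \bar G r$ and $\bar K\coPrimal = \bar K\inv{\bar G}\bar K^*\coDual + \bar K r$. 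From \eqref{eq:general-c_g} and Cauchy--Schwarz we get $\norm{\bar G r}\ge c_G^2\norm{r}$. Also $\norm{\coPrimal}^2+\norm{\coDual}^2$ and $\norm{r}^2+\norm{\coDual}^2$ are equivalent up to a factor depending only on $c_G$ and $\norm{\bar K}$, so it suffices to prove lower bounds of the form $A^2+B^2\gtrsim \norm{r}^2+\norm{\coDual}^2$.

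\emph{Sufficiency.} For either hypothesis, I will establish a lower bound of the form
\[
    \inf_{\nu\in\polar{\tilde\DerivCone}}\norm{\bar K\inv{\bar G}\bar K^*\coDual+\gamma\coDual-\nu}\ge\sigma\,\norm{\coDual}
    \qquad(\coDual\in\tilde\DerivCone),
\]
with $\sigma=\gamma$ under \ref{item:general-polar-projection-lower-bound-i} and $\sigma=\sqrt{c_{K,\DerivCone}}$ under \ref{item:general-polar-projection-lower-bound-ii}. For \ref{item:general-polar-projection-lower-bound-i} this follows from testing with $\coDual$ itself: since $\iprod{\coDual}{\nu}\le 0$ for $\nu\in\polar{\tilde\DerivCone}$ and $\iprod{\bar K\inv{\bar G}\bar K^*\coDual}{\coDual}=\norm{\bar K^*\coDual}^2_{\inv{\bar G}}\ge 0$, Cauchy--Schwarz yields the stated bound. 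For \ref{item:general-polar-projection-lower-bound-ii} it is just \eqref{eq:ckv-0}. Combining with $\norm{\bar K r}\le\norm{\bar K}\norm{r}$ and the triangle inequality gives
\[
    \inf_{\nu}\norm{\bar K\coPrimal+\gamma\coDual-\nu} \ge \sigma\norm{\coDual}-\norm{\bar K}\norm{r}.
\]
Then I split: if $\sigma\norm{\coDual}\le 2\norm{\bar K}\norm{r}$, the first-block bound $c_G^4\norm{r}^2$ already controls $\norm{r}^2+\norm{\coDual}^2$; otherwise the second-block bound controls $\norm{\coDual}^2$ and $\norm{r}^2$ is small compared to it. Either way I obtain \eqref{eq:general-polar-projection-lower-bound} with the advertised constants.

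\emph{Necessity of \ref{item:general-polar-projection-lower-bound-ii} when $\gamma=0$.} If both \ref{item:general-polar-projection-lower-bound-i} and \ref{item:general-polar-projection-lower-bound-ii} fail, there exists a sequence $\coDual_n\in\tilde\DerivCone$ with $\norm{\coDual_n}=1$ and $\inf_\nu\norm{\bar K\inv{\bar G}\bar K^*\coDual_n-\nu}\to 0$. Choose $\coPrimal_n\defeq\inv{\bar G}\bar K^*\coDual_n$, so $r_n=0$ and $w_n=(\coPrimal_n,\coDual_n)\in\DerivCone$. Then $\norm{w_n}\ge 1$ while $\inf_\dcVar\norm{T^*w_n-\dcVar}\to 0$, contradicting \eqref{eq:general-polar-projection-lower-bound}. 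The same construction additionally gives the upper bound $c\le c_{K,\DerivCone}$: any valid $c$ satisfies $c\le\inf_\dcVar\norm{T^*w-\dcVar}^2/\norm{w}^2$ for $w=(\inv{\bar G}\bar K^*\coDual,\coDual)$, so taking the infimum over $\coDual\in\tilde\DerivCone\setminus\{0\}$ and using $\norm{w}^2\ge\norm{\coDual}^2$ yields the claim.

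The only delicate step is the two-case balancing in sufficiency; the bookkeeping there is elementary but must be done carefully to get a constant depending only on the quantities stated in the lemma.
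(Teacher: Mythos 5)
Your proof is correct, and the sufficiency half takes a genuinely different route from the paper's. The paper treats the two cases by separate mechanisms: for \ref{item:general-polar-projection-lower-bound-i} it expands $\norm{T^*w-\dcVar}^2$ in full, inserts a "productive zero" parameter $\lambda\in[0,\gamma]$, exploits $(\lambda-\gamma)\iprod{\nu}{\coDual}\ge 0$, and closes with Young's inequality and the choice $\rho_1=\inv\lambda$, $\rho_2=1$; for \ref{item:general-polar-projection-lower-bound-ii} it argues by contradiction, normalizing $\norm{w}=1$ and propagating the error terms $e_1,e_2$ through $\inv{\bar G}$. Your substitution $r=\coPrimal-\inv{\bar G}\bar K^*\coDual$ is in effect a Schur-complement reduction that handles both cases uniformly: the first block becomes $\bar G r$, the second becomes the Schur complement $\bar K\inv{\bar G}\bar K^*\coDual+\gamma\coDual$ perturbed by $\bar K r$, and a single lower bound $\sigma\norm{\coDual}$ on the complement (obtained by testing against $\coDual$ under \ref{item:general-polar-projection-lower-bound-i}, or directly from \eqref{eq:ckv-0} under \ref{item:general-polar-projection-lower-bound-ii}) plus a two-case balancing finishes the job. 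This is arguably cleaner and makes transparent why $\bar K\inv{\bar G}\bar K^*$ is the right object in \eqref{eq:ckv-0}. Your necessity argument and the bound $c\le c_{K,\DerivCone}$ coincide with the paper's (both pick $\coPrimal=\inv{\bar G}\bar K^*\coDual$ to kill the first block).

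Two small points. First, in case \ref{item:general-polar-projection-lower-bound-i} your constant comes out as $c=c(\gamma,c_G,\norm{\bar K})$ — the factor $\norm{\bar K}$ enters both through the case split ($\sigma\norm{\coDual}\lessgtr 2\norm{\bar K}\norm{r}$) and through the equivalence of $\norm{r}^2+\norm{\coDual}^2$ with $\norm{w}^2$ — whereas the lemma asserts, and the paper's $\lambda$-argument delivers, $c=c(\gamma,c_G)$ only. This is strictly weaker than the stated parenthetical, though harmless for the downstream use in \cref{lemma:limit-polar-projection-lower-bound}, where $\alt{u}\mapsto\bar K_{\alt{u}}$ is continuous and hence locally bounded. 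Second, the divisions by $\norm{\bar K}$ and by $\sigma$ in the balancing step need the trivial conventions for $\bar K=0$ (then $\bar K r=0$ and the second block alone gives $\sigma\norm{\coDual}$); worth a sentence if you write this out in full.
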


\begin{proof}
    We first prove the sufficiency of \ref{item:general-polar-projection-lower-bound-i} and \ref{item:general-polar-projection-lower-bound-ii}.
    With $w=(\coPrimal,\coDual) \in \DerivCone = X \times \DerivConeF$, and $\dcVar=(0, \nu) \in \polar\DerivCone = \{0\} \times \polar \DerivConeF$, we calculate
    \begin{equation}
        \label{eq:general-polar-projection-lower-bound-est0}
        \begin{aligned}
        \norm{T^*w-\dcVar}^2
        & =
        \norm{\bar G \coPrimal-\bar K^*\coDual}^2
        +
        \norm{\bar K \coPrimal+\gamma\coDual-\nu}^2
        \\
        & =
        \norm{\bar G \coPrimal}^2+\norm{\bar K^*\coDual}^2-2\iprod{\bar G \coPrimal}{\bar K^*\coDual}
        \\ \MoveEqLeft[-1]
        +\norm{\bar K \coPrimal}^2+\gamma^2\norm{\coDual}^2+\norm{\nu}^2
        +2\gamma\iprod{\bar K \coPrimal}{\coDual}
        -2\iprod{\bar K \coPrimal}{\nu}
        -2\gamma\iprod{\coDual}{\nu}
        \\
        & =
        \norm{\bar G \coPrimal}^2+\norm{\bar K^*\coDual}^2-2\iprod{\bar G \coPrimal}{\bar K^*\coDual}
        +\norm{\bar K\coPrimal-\nu}^2
        +\gamma^2\norm{\coDual}^2+2\gamma\iprod{\bar K \coPrimal}{\coDual}
       -2\gamma\iprod{\coDual}{\nu}.
       \end{aligned}
    \end{equation}
    Assume first that $\gamma>0$. For arbitrary $\lambda\in [0,\gamma]$, we can insert the productive zero and use $(\lambda-\gamma)\iprod{\nu}{\coDual} \ge 0$ for all $\nu \in \polar {\tilde\DerivCone}$ and $\coDual \in {\tilde\DerivCone}$ to obtain
    \begin{equation}
        \label{eq:general-polar-projection-lower-bound-lambda-introduce}
        \begin{aligned}
        \norm{T^*w-\dcVar}^2
        & =
        \norm{\bar G \coPrimal}^2+\norm{\bar K^*\coDual}^2-2\iprod{(\bar G +\lambda I-\gamma I)\coPrimal}{\bar K^*\coDual}
        \\ \MoveEqLeft[-1]
        +2\lambda\iprod{\bar K \coPrimal}{\coDual}
        +\norm{\bar K\coPrimal-\nu}^2
        +\gamma^2\norm{\coDual}^2
        -2\gamma\iprod{\coDual}{\nu}\\
        & \ge
        \norm{\bar G \coPrimal}^2+\norm{\bar K^*\coDual}^2-2\iprod{(\bar G +\lambda I-\gamma I)\coPrimal}{\bar K^*\coDual}
        \\ \MoveEqLeft[-1]
        +2\lambda\iprod{\bar K\coPrimal-\nu}{\coDual}
        +\norm{\bar K\coPrimal-\nu}^2
        +\gamma^2\norm{\coDual}^2.
        \end{aligned}
    \end{equation}
    This we further estimate by application of Young's inequality for any $\rho_1,\rho_2>0$ as
    \begin{equation}
        \label{eq:general-polar-projection-lower-bound-est2}
        \begin{aligned}[t]
        \norm{T^*w-\dcVar}^2
        & \ge
        \norm{\bar G \coPrimal}^2+\norm{\bar K^*\coDual}^2-2\iprod{(\bar G +\lambda I-\gamma I)\coPrimal}{\bar K^*\coDual}
        \\ \MoveEqLeft[-1]
        +(\gamma^2-\lambda\inv\rho_1)\norm{\coDual}^2 +(1-\lambda\rho_1)\norm{\bar K\coPrimal-\nu}^2
        \\
        & \ge
        \norm{\bar G \coPrimal}^2-\inv\rho_2\norm{(\bar G +\lambda I-\gamma I)\coPrimal}^2+(1-\rho_2)\norm{\bar K^*\coDual}^2
        \\ \MoveEqLeft[-1]
        +(\gamma^2-\lambda\inv\rho_1)\norm{\coDual}^2
        +(1-\lambda\rho_1)\norm{\bar K\coPrimal-\nu}^2.
        \end{aligned}
    \end{equation}
    Let us choose $\rho_1=\inv\lambda$ and $\rho_2=1$. Then \eqref{eq:general-polar-projection-lower-bound-est2} becomes
    \[
        \norm{T^*w-\dcVar}^2
        \ge
        \norm{\bar G \coPrimal}^2-\norm{(\bar G +\lambda I-\gamma I)\coPrimal}^2
        +(\gamma^2-\lambda^2)\norm{\coDual}^2.
    \]
    Now,
    \[
        \norm{\bar G \coPrimal}^2-\norm{(\bar G +\lambda I-\gamma I)\coPrimal}^2
        =
        2(\gamma-\lambda)\iprod{\bar G \coPrimal}{\coPrimal}-(\gamma-\lambda)^2\norm{\coPrimal}^2,
    \]
    so that by \eqref{eq:general-c_g} we therefore require that
    \[
        2(\gamma-\lambda)c_G - (\gamma-\lambda)^2 > 0.
    \]
    This holds if $\lambda < \gamma$ is large enough, verifying case \ref{item:general-polar-projection-lower-bound-i} including the relationship $c=c(\gamma, c_G)$.

    Suppose next that $\gamma=0$. To verify the sufficiency of \ref{item:general-polar-projection-lower-bound-ii}, we proceed by contradiction, assuming \eqref{eq:general-polar-projection-lower-bound} not to hold for
    \[
        c= \frac{c_{K,\DerivCone}}{1+\norm{\bar K \inv{\bar G}}}.
    \]
    Thus, for some $c' \in (0, c)$, we can find $w \in X \times {\tilde\DerivCone}$ and $\nu \in \polar{\tilde\DerivCone}$ satisfying
    \[
        \norm{T^* w - (0, \nu)}^2 \le c'\norm{w}^2.
    \]
    We may assume that $\norm{w}=1$. Thus,
    \[
        T^* w-(0, \nu)=(e_1, e_2)
        \quad\text{where}\quad \norm{e_1}^2+\norm{e_2}^2 \le c',
    \]
    which means
    \[
        \bar G \coPrimal-\bar K^*\coDual=e_1
        \quad
        \text{and}
        \quad
        \bar K \coPrimal -\nu=e_2.
    \]
    Since $\bar G$ is invertible by \eqref{eq:general-c_g}, this shows that
    \[
        \bar K \inv{\bar G} \bar K^* \coDual-\nu=e_2-\bar K \inv{\bar G}e_1.
    \]
    Thus
    \[
        \norm{\bar K \inv{\bar G} \bar K^* \coDual - \nu}^2
        \le 2(1+\norm{\bar K \inv{\bar G}}) c'
        < c_{K,\DerivCone},
    \]
    in contradiction to \eqref{eq:ckv-0}.
    Therefore \ref{item:general-polar-projection-lower-bound-ii} is sufficient for \eqref{eq:general-polar-projection-lower-bound}. We also estimate
    \[
        \norm{\bar G \coPrimal} \ge c_G \norm{\coPrimal}.
    \]
    Using the standard relation
    \[
        \sup_{\norm{\coPrimal}=1} \norm{\inv{\bar G}\coPrimal} = \sup_{\coPrimal \ne 0} \frac{\norm{\coPrimal}}{\norm{\bar G \coPrimal}},
    \]
    we therefore have
    \[
        \norm{\bar K \inv{\bar G}} \le c_G^{-1}\norm{\bar K}.
    \]
    This verifies $c=c(\norm{K}, c_G, c_{K,\DerivCone})$.

    Having dealt with the sufficient conditions, let us now verify the necessity of \eqref{eq:ckv-0} when $\gamma=0$. We expand
    \begin{equation}
        \label{eq:general-polar-projection-lower-bound-necessary-conds}
        \norm{T^*w-\dcVar}^2 =
        \norm{\bar G \coPrimal - \bar K^* \coDual}^2
        +\norm{\bar K\coPrimal-\nu}^2.
    \end{equation}
    Using the invertibility of $\bar G$ from \eqref{eq:general-c_g}, let us choose $\coPrimal=\inv{\bar G}\bar K^*\coDual$. Then \eqref{eq:general-polar-projection-lower-bound-necessary-conds} gives
    \[
        \norm{T^*w-\dcVar}^2 = \norm{\bar K\inv{\bar G}\bar K^*\coDual-\nu}^2,
    \]
    immediately showing the necessity of \eqref{eq:ckv-0} and
    $c_{K,\DerivCone} \ge c$.
\end{proof}

\begin{remark}
    \label{rem:barkstar-simplified-necessary}
    It is easily seen that if $\gamma=0$, then existence of a $c>0$ such that
    \begin{equation}
        \label{eq:ckv-1}
        \norm{\bar K^*\coDual} \ge c \norm{\coDual}
        \quad
        (\coDual \in \DerivConeF)
    \end{equation}
    is necessary for the satisfaction of \eqref{eq:general-polar-projection-lower-bound}.
\end{remark}

We now combine the above low-level lemma with \cref{lemma:general-limit-polar-projection-lower-bound}.

\begin{lemma}
    \label{lemma:limit-polar-projection-lower-bound}
    Let $q, w \in Q=W=X \times Y$ and suppose that for $(\alt{q}, \alt{w})$ in a neighborhood $U$ of $(q, w)$, $\graph R \isect U$ is closed, \eqref{eq:somesetmap-cod-d-upper-adjoint} holds, and we have 
    \begin{equation}
        \label{eq:dr-expr}
        \widetilde{DR}(\alt{q}|\alt{w})(\dir{q}) \supset
        \begin{cases}
            T_{\alt{q}} \dir{q} + \polar{\DerivConeX[\alt{q}|\alt{w}]{R}}, & \dir{q} \in \DerivConeX[\alt{q}|\alt{w}]{R}, \\
            \emptyset, & \dir{q} \not\in \DerivConeX[\alt{q}|\alt{w}]{R},
        \end{cases}
    \end{equation}
    for
    \begin{equation}
        T_{\alt{q}} = 
        \begin{pmatrix}
            \bar G_{\alt{q}} & \bar K_{\alt{u}}^* \\
             - \bar K_{\alt{u}} & \gamma I
        \end{pmatrix},
    \end{equation}
    and
    \begin{equation}
        \label{eq:limit-polar-projection-lower-bound-derivcone}
        \DerivConeX[\alt{q}|\alt{w}]{R}= X \times \tilde\DerivCone(\alt{\Dual}|\alt{\coDual}) \subset X \times Y.
    \end{equation}
    In addition to these structural assumptions, suppose that the mappings $\alt{q} \mapsto \bar G_{\alt{q}}$ and $\alt{u} \mapsto \bar K_{\alt{u}}$ are continuous at $q$ and $u$, respectively. Assume, moreover, that each $\bar G_{\alt{q}}$ is self-adjoint and positive definite, i.e., there exists $c_G>0$ such that
    \begin{gather}
        \label{eq:limit-c_G}
        \iprod{\bar G_q\coPrimal}{\coPrimal} \ge c_G \norm{\coPrimal}^2 \qquad (\coPrimal \in X).
    \end{gather}
    Define further
   \begin{equation}
        \label{eq:kvbound}
        \kvbound(q|w; R) \defeq
            \sup_{t>0} \inf_{\substack{((0, \dir{\coDual}), (0, \nu)) \in \DerivConeDXt[q|w]{t}{R}, \\ \dir{\coDual} \ne 0}} \frac{\norm{\bar K_\Primal \inv{\bar G_\Primal} \bar K_\Primal^*\dir{\coDual}-\nu}}{\norm{\dir{\coDual}}}.
    \end{equation}
    Then
    \begin{equation}
        \label{eq:kvbound-lipnum}
        \lipnum_{\inv R}(w|q) < \infty 
    \end{equation}
    provided 
    \begin{gather}
        \label{eq:limit-cKF}
        \max\{\gamma, \kvbound(q|w; R)\} > 0. 
    \end{gather}
    If \eqref{eq:dr-expr} holds as an equality, then \eqref{eq:kvbound-lipnum} holds if and only if \eqref{eq:limit-cKF} holds.
\end{lemma}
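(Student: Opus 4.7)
The plan is to verify, under the hypothesis $\max\{\gamma, \kvbound(q|w;R)\} > 0$, that $\grbound(q|w;R) > 0$ and then invoke \cref{lemma:general-limit-polar-projection-lower-bound} to conclude that $\lipnum_{\inv R}(w|q) < \infty$. The structural assumption \eqref{eq:limit-polar-projection-lower-bound-derivcone} guarantees that any element $(\dir{w}, \dcVar) \in \DerivConeDXt[q|w]{t}{R}$ splits as $\dir{w} = (\dir\coPrimal, \dir\coDual)$ with $\dir\coDual \in \tilde\DerivCone(\alt\Dual|\alt\coDual)$ and $\dcVar = (0, \nu)$ with $\nu \in \polar{\tilde\DerivCone(\alt\Dual|\alt\coDual)}$, for some nearby $(\alt q, \alt w)$ with $\alt w \in R(\alt q)$. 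Thus the inner estimation problem is exactly the one treated in \cref{lemma:general-polar-projection-lower-bound}, applied with the fixed operators $\bar G_q$, $\bar K_u$ (at the central point) and the variable cone $\tilde\DerivCone(\alt\Dual|\alt\coDual)$.

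In the subcase $\gamma > 0$, case \ref{item:general-polar-projection-lower-bound-i} of \cref{lemma:general-polar-projection-lower-bound} yields a constant $c(\gamma, c_G) > 0$ independent of the cone, so that $\norm{T_q^* \dir{w} - \dcVar}^2 \ge c(\gamma, c_G)\norm{\dir{w}}^2$ uniformly over $\DerivConeDXt[q|w]{t}{R}$ for every $t > 0$, whence $\grbound(q|w;R) > 0$. In the subcase $\kvbound(q|w;R) > 0$, the definition \eqref{eq:kvbound} provides $t > 0$ and $c_K > 0$ such that $\norm{\bar K_u \inv{\bar G_q} \bar K_u^* \dir\coDual - \nu} \ge c_K \norm{\dir\coDual}$ for every pair $((0, \dir\coDual), (0, \nu)) \in \DerivConeDXt[q|w]{t}{R}$. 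This is precisely the uniform version of condition \ref{item:general-polar-projection-lower-bound-ii} of \cref{lemma:general-polar-projection-lower-bound} across the family of cones, with $c_{K,\tilde\DerivCone} \ge c_K^2$, and so produces the uniform bound $\norm{T_q^* \dir{w} - \dcVar}^2 \ge c(\norm{\bar K_u}, c_G, c_K^2)\norm{\dir{w}}^2$. Either way $\grbound(q|w;R) > 0$, and \cref{lemma:general-limit-polar-projection-lower-bound} delivers \eqref{eq:kvbound-lipnum}.

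For the converse, assume \eqref{eq:dr-expr} holds as an equality along with $\gamma = 0$ and $\kvbound(q|w;R) = 0$. Given arbitrary $\eps, t > 0$, the vanishing of $\kvbound$ produces $(\alt q, \alt w)$ with $\alt w \in R(\alt q)$, $\norm{\alt q - q} < t$, $\norm{\alt w - w} < t$, along with $\dir\coDual \in \tilde\DerivCone(\alt\Dual|\alt\coDual)$ and $\nu \in \polar{\tilde\DerivCone(\alt\Dual|\alt\coDual)}$ satisfying $\norm{\bar K_u \inv{\bar G_q} \bar K_u^* \dir\coDual - \nu} \le \eps \norm{\dir\coDual}$. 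Setting $\dir\coPrimal \defeq \inv{\bar G_q} \bar K_u^* \dir\coDual$ (which is well defined by the invertibility of $\bar G_q$ implied by \eqref{eq:limit-c_G}), $\dir{w} \defeq (\dir\coPrimal, \dir\coDual)$, and $\dcVar \defeq (0, \nu)$, the first component of $\DerivCone$ being $X$ ensures $(\dir{w}, \dcVar) \in \DerivConeDXt[q|w]{t}{R}$. A direct computation using $\gamma = 0$ and the self-adjointness of $\bar G_q$ gives $T_q^* \dir{w} - \dcVar = (0, \bar K_u \inv{\bar G_q} \bar K_u^* \dir\coDual - \nu)$, so that $\norm{T_q^* \dir{w} - \dcVar} \le \eps \norm{\dir\coDual} \le \eps \norm{\dir{w}}$. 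Since $t$ and $\eps$ were arbitrary, $\grbound(q|w;R) = 0$, and the equality statement in \cref{lemma:general-limit-polar-projection-lower-bound} gives $\lipnum_{\inv R}(w|q) = \infty$.

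The chief technical point is that the constants in \cref{lemma:general-polar-projection-lower-bound} depend on the cone only through the single scalar $c_{K,\DerivCone}$, or not at all when $\gamma > 0$. This makes the translation of the uniform hypothesis $\kvbound(q|w;R) > 0$ into a uniform lower bound on $\norm{T_q^* \dir w - \dcVar}/\norm{\dir w}$ essentially automatic, with the spatial variation of $\bar G_{\alt q}$ and $\bar K_{\alt u}$ already absorbed into \cref{lemma:general-limit-polar-projection-lower-bound} through the continuity hypotheses.
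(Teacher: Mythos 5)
Your proof is correct and follows essentially the same route as the paper: reduce to a uniform lower bound on $\grbound(q|w;R)$ via \cref{lemma:general-polar-projection-lower-bound} (whose constants depend on the cone only through $c_{K,\DerivCone}$, or not at all when $\gamma>0$) and then invoke \cref{lemma:general-limit-polar-projection-lower-bound}. Your converse is slightly more explicit than the paper's "retrace the steps" remark — you inline the choice $\dir\coPrimal=\inv{\bar G_q}\bar K_u^*\dir\coDual$ from the necessity part of \cref{lemma:general-polar-projection-lower-bound} to exhibit $\grbound=0$ directly — but this is the same underlying argument.
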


\begin{proof}
    If $\gamma>0$, we may directly apply \cref{lemma:general-limit-polar-projection-lower-bound}. So we take $\gamma=0$.
    Suppose first that \eqref{eq:limit-cKF} holds. Then $\kvbound(q|w; R) =: c_{K, \DerivCone}>0$, and \eqref{eq:kvbound} gives
    \begin{equation}
        \label{eq:limit-polar-projection-lower-bound-step1}
        \frac{\norm{\bar K_\Primal \inv{\bar G_\Primal} \bar K_\Primal^*\dir{\coDual}-\nu}}{\norm{\dir{\coDual}}} \ge c_{K, \DerivCone}
    \end{equation}
    for every $\dir{\coDual} \ne 0$ and $\nu$ satisfying
    \[
        ((0, \dir{\coDual}), (0, \nu)) \in \DerivConeDXt[q|w]{t}{R}.
    \]
    That is, using the facts that $0 \in X$ and $0 \in \polar X$, as well as the expression \eqref{eq:limit-polar-projection-lower-bound-derivcone}, we see that \eqref{eq:limit-polar-projection-lower-bound-step1} holds whenever
    \[
        \dir{\coDual} \in \tilde\DerivCone(\alt{\Dual}|\alt{\coDual})
        \quad\text{and}\quad
        \nu \in \polar{\tilde\DerivCone(\alt{\Dual}|\alt{\coDual})}
    \]
    for some $\alt{q}=(\alt{\Primal}, \alt{\Dual})$ and $\alt{w}=(\alt{\coPrimal}, \alt{\coDual})$ satisfying
    \[
        \alt{w} \in R(\alt{q}),\quad \norm{\alt{q}-q} < t,\quad \norm{\alt{w}-w} < t.
    \]
    With $\alt{q}$ and $\alt{w}$ fixed, \cref{lemma:general-polar-projection-lower-bound} now shows the existence of a constant $c>0$ such that
    \begin{equation}
        \label{eq:general-polar-projection-lower-bound-use1}
        \norm{T^* \dir{w} - \dcVar}^2 \ge c\norm{\dir{w}}^2
    \end{equation}
    for all
    \[
        (\dir{w}, \dcVar) \in (X \times \tilde\DerivCone(\alt{\Dual}|\alt{\coDual})) \times \polar{(X \times \tilde\DerivCone(\alt{\Dual}|\alt{\coDual}))},
    \]
    with $c$ depending only on $\norm{\bar K}$, $c_G$, and $c_{K,\DerivCone}$.
    Therefore \eqref{eq:general-polar-projection-lower-bound-use1} holds for all
    \[
        (\dir{w}, \dcVar) \in \DerivConeDXt[q|w]{t}{R}.
    \]
    Applying \eqref{eq:general-polar-projection-lower-bound-use1} in the expression for $\grbound$ in \eqref{eq:grbound} now shows that
    \[
        \grbound(q|w; R) \ge c.
    \]
    Finally, an application of \cref{lemma:general-limit-polar-projection-lower-bound} shows that $\lipnum_{\inv R}(w|q) < \infty$.

    In the other direction, to show that $\lipnum_{\inv R}(w|q) = \infty$ if $\kvbound(q|w; R)=0$, we assume to the contrary that  $\lipnum_{\inv R}(w|q) < \infty$. Then $\grbound(q|w; R) \ge c$ for some constant $c>0$. Now we perform the above steps in the opposite direction to show that $\kvbound(q|w; R)>0$, in contradiction to the premise.
\end{proof}

The following theorem, which
specializes \cref{lemma:limit-polar-projection-lower-bound} to the specific structure assumed in this section and estimates the lower bounds slightly to derive easier conditions, is one of the main results of this work.
\begin{theorem}
    \label{thm:limit-polar-projection-lower-bound-fstar}
    Let $q, w \in Q=W=X \times Y$ and let $U$ be a neighborhood of $(q, w)$.
    Suppose that
    \begin{equation}
        \label{eq:limit-polar-projection-lower-bound-fstar-r}
        R(\alt{q})=P(\alt{q})+h(\alt{q})
        \quad\text{for}\quad
        P(\alt{q})=\begin{pmatrix} \grad G(\alt{\Primal}) \\ \subdiff F^*(\alt{\Dual})\end{pmatrix}
        \quad\text{and}\quad
        h(\alt{q})=\begin{pmatrix}\grad \bar h(\alt{u})^*\alt{v} \\ -\bar h(\alt{u}) \end{pmatrix}
    \end{equation}
    for $G$ and $F^*$ of the form \eqref{eq:g-pointwise-integral} for some regular integrands $g$ and $f^*$, respectively. 
    Assume further that $\bar h \in C^1(X; Y)$ and $G \in C^2(X)$, and that $F^*$ satisfies
    for some $\gamma \ge 0$ the inclusion
    \begin{equation}
        \label{eq:dfstar-expr}
        \widetilde{D[\subdiff F^*]}(\alt{\Dual}|\alt{\coDual})(\dir{\Dual}) \supset
        \begin{cases}
            \gamma \dir{\Dual} + \polar{\DerivConeF(\alt{\Dual}|\alt{\coDual})}, & \dir{\Dual} \in \DerivConeF[\alt{\Dual}|\alt{\coDual}], \\
            \emptyset, & \dir{\Dual} \not\in \DerivConeF[\alt{\Dual}|\alt{\coDual}].
        \end{cases}
    \end{equation}
    In addition to these structural assumptions, suppose that there exists a constant $c_G>0$ such that
    \begin{gather}
        \label{eq:limit-c_G-fstar}
        \iprod{\grad^2 G(u)\coPrimal + \grad_u[\grad \bar h(u)^*\Dual]\coPrimal}{\coPrimal} \ge c_G \norm{\coPrimal}^2 \qquad (\coPrimal \in X).
    \end{gather}
    Define for
    \[
        \bar B \defeq \grad \bar h(u)\inv{\bigl(\grad^2 G(u) + \grad_u[\grad \bar h(u)^*\Dual]\bigr)}\grad \bar h(u)^*
    \]
    and
    \begin{equation}
        \label{eq:vt-fstar}
        \DerivConeDFt[\Dual|\coDual]{t} \defeq \Union\left\{
            \DerivConeF[\alt{\Dual}|\alt{\coDual}] \times \polar{\DerivConeF[\alt{\Dual}|\alt{\coDual}]}
            \,\middle|\,
            \alt{\coDual} \in \subdiff F^*(\alt{\Dual}),\, \norm{\alt{\Dual}-\Dual} < t,\, \norm{\alt{\coDual}-\coDual} < t
            \right\}
    \end{equation}
    the quantity
    \begin{equation}
        \label{eq:kvbound-fstar}
        \bar\kvbound(q|w; R) \defeq
            \sup_{t>0} \inf\left\{ 
                \frac{\norm{\bar B \dcVar-\nu}}{\norm{\dcVar}}
                ~\middle|~
                (\dcVar, \nu) \in \DerivConeDFt[\Dual|\coDual+\bar h(\Primal)]{t},\, \dcVar \ne 0
                \right\}.
    \end{equation}
    Then
    \begin{equation}
        \label{eq:lipnum-kvbound-fstar}
        \lipnum_{\inv R}(w|q) < \infty 
    \end{equation}
    provided
    \begin{equation}
        \label{eq:limit-cKF-fstar}
        \max\{\gamma, \bar\kvbound(q|w; R)\} > 0. 
    \end{equation}
    If \eqref{eq:dfstar-expr} holds as an equality, then \eqref{eq:lipnum-kvbound-fstar} holds if and only if \eqref{eq:limit-cKF-fstar} holds.
\end{theorem}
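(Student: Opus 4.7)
The plan is to view this theorem as a direct specialization of \cref{lemma:limit-polar-projection-lower-bound}: the job reduces to unpacking $R=P+h$ into the block structure demanded by \eqref{eq:dr-expr}, verifying the remaining hypotheses, and matching $\kvbound$ with $\bar\kvbound$.

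First, I would apply \cref{cor:d-p-plus-smooth} together with the product structure of $P$ to obtain, writing $\alt{w}=(\alt{\coPrimal},\alt{\coDual})$,
\[
    \widetilde{DR}(\alt{q}|\alt{w})(\dir{q})
    = \widetilde{DP}(\alt{q}|\alt{w}-h(\alt{q}))(\dir{q}) + \grad h(\alt{q})\dir{q}.
\]
Since $G\in C^2$, the primal block of $\widetilde{DP}$ reduces to multiplication by $\grad^2 G(\alt{u})$; for the dual block, the structural assumption \eqref{eq:dfstar-expr} supplies the linear-plus-polar form, evaluated at the shifted base subgradient $\alt{\coDual}+\bar h(\alt{u})$. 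Adding the Jacobian
\[
    \grad h(\alt{q}) = \begin{pmatrix} \grad_u[\grad\bar h(\alt{u})^*\alt{v}] & \grad\bar h(\alt{u})^* \\ -\grad\bar h(\alt{u}) & 0 \end{pmatrix}
\]
places $\widetilde{DR}(\alt{q}|\alt{w})$ into the form \eqref{eq:dr-expr} with $T_{\alt q}$ as prescribed there, $\bar G_{\alt q}=\grad^2 G(\alt u)+\grad_u[\grad\bar h(\alt u)^*\alt v]$, $\bar K_{\alt u}=\grad\bar h(\alt u)$, and cone $\DerivConeX[\alt{q}|\alt{w}]{R}=X\times\DerivConeF[\alt{v}|\alt{\coDual}+\bar h(\alt{u})]$, which has exactly the product structure \eqref{eq:limit-polar-projection-lower-bound-derivcone}.

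Next I would verify the remaining hypotheses of \cref{lemma:limit-polar-projection-lower-bound}. Local closedness of $\graph R$ and the upper-adjoint identity \eqref{eq:somesetmap-cod-d-upper-adjoint} follow as in \cref{prop:dh,prop:dr0} from lower semicontinuity of $F^*$ and smoothness of $G$ and $\bar h$. Continuity of $\alt{u}\mapsto\grad\bar h(\alt{u})$ comes from $\bar h\in C^1$, while continuity of $\alt{q}\mapsto\bar G_{\alt{q}}$ comes from $G\in C^2$ together with the regularity of $\bar h$. The coercivity assumption \eqref{eq:limit-c_G} is precisely \eqref{eq:limit-c_G-fstar}.

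Finally, I would match the scalar quantities. Unpacking $\DerivConeDXt[q|w]{t}{R}$ from \eqref{eq:general-vt}, the product structure $X\times\DerivConeF$ combined with $\polar X=\{0\}$ forces the primal component of the pairs $((0,\dir{\coDual}),(0,\nu))$ appearing in the definition \eqref{eq:kvbound} of $\kvbound(q|w;R)$ to vanish, and the dual component to range precisely over $(\dir{\coDual},\nu)\in\DerivConeDFt[\Dual|\coDual+\bar h(\Primal)]{t}$ as in \eqref{eq:vt-fstar}; the slight discrepancy between $\norm{\alt{\coDual}-\coDual}<t$ and $\norm{\alt{\coDual}+\bar h(\alt{u})-(\coDual+\bar h(\Primal))}<t$ is absorbed by the outer $\sup_{t>0}$ through continuity of $\bar h$. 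Since moreover $\bar K_u\inv{\bar G_q}\bar K_u^*=\bar B$, we have $\kvbound(q|w;R)=\bar\kvbound(q|w;R)$, so \eqref{eq:limit-cKF} becomes \eqref{eq:limit-cKF-fstar}, and the conclusion \eqref{eq:lipnum-kvbound-fstar}, together with its converse under equality in \eqref{eq:dfstar-expr}, follows from \cref{lemma:limit-polar-projection-lower-bound}. The main obstacle is thus purely bookkeeping: tracking that the shift $-\bar h(\alt{u})$ inside $h$ translates the base subgradient for $F^*$ from $\alt{\coDual}$ to $\alt{\coDual}+\bar h(\alt{u})$, which is precisely why \eqref{eq:kvbound-fstar} is indexed by $\coDual+\bar h(\Primal)$ rather than by $\coDual$.
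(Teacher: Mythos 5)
Your proposal is correct and follows essentially the same route as the paper: specialize \cref{lemma:limit-polar-projection-lower-bound} by reading off $\bar G_{\alt q}$, $\bar K_{\alt u}$ and the product cone $X\times\DerivConeF[\alt\Dual|\alt\coDual+\bar h(\alt u)]$ from \cref{cor:d-p-plus-smooth}, then identify $\kvbound$ with $\bar\kvbound$. The only step you compress is the two-sided matching of the neighborhoods defining $\DerivConeDXt[q|w]{t}{R}$ and $\DerivConeDFt[\Dual|\coDual+\bar h(\Primal)]{t}$ (the paper proves both inclusions explicitly, with a constant $C$ absorbed by the $\sup_{t>0}$), but your appeal to the local Lipschitz continuity of $\bar h$ is exactly the mechanism used there.
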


\begin{proof}
    Similarly to \cref{prop:dr0}, we compute
    \begin{equation}
        D \somesetmap(\alt{q}|\alt{w})(\dir{q})
        =
        \begin{pmatrix}
            \grad^2 G(\alt{u})(\dir\Primal) + \grad_u [\grad \bar h(\alt{u})^*\alt\Dual]\dir\Primal + \grad \bar h(\alt{u})^* \dir\Dual \\
            D{[\subdiff F^*]}(\alt\Dual|\alt\coDual + \bar h(\alt{u}))(\dir\Dual) - \grad \bar h(\alt{u}) \dir{u} \\
        \end{pmatrix},
    \end{equation}
    and
    \begin{equation}
        \label{eq:limit-polar-projection-lower-bound-fstar-convexdr}
        \widetilde{D \somesetmap}(\alt{q}|\alt{w})(\dir{q})
        =
        \begin{pmatrix}
            \grad^2 G(\alt{u})(\dir\Primal) + \grad_u [\grad \bar h(\alt{u})^*\alt\Dual]\dir\Primal + \grad \bar h(\alt{u})^* \dir\Dual \\
            \widetilde{D{[\subdiff F^*]}}(\alt\Dual|\alt\coDual + \bar h(\alt{u}))(\dir\Dual) - \grad \bar h(\alt{u}) \dir{u} \\
        \end{pmatrix},
    \end{equation}
    where we denote
    \[
        \grad_u [\grad \bar h(\alt{u})^*\alt\Dual]\dir\Primal
        \defeq
        \grad\left(\tilde u \mapsto [\grad \bar h(\tilde{u})^*\alt\Dual]\dir\Primal\right)(\alt u).
    \]
    The structural assumptions of \cref{lemma:limit-polar-projection-lower-bound} are thus satisfied with 
    \[
        \bar G_{\alt{q}} \defeq \grad^2 G(\alt{u})+\grad_u[\grad \bar h(\alt{u})^*\alt{\Dual}],
        \quad
        \bar K_{\alt{u}} 
        \defeq
        \grad \bar h(\alt{u}),
        \quad\text{and}\quad
        \DerivConePrime[\alt{q}|\alt{w}] \defeq \DerivConeF[\alt{\Dual}|\alt{\coDual}+\bar h(\alt{\Primal})].
    \]
    Moreover, Graph $R \isect U$ is closed due to the assumptions on $\bar h$ and $G$ and to $G$ and $F^*$ being convex. Further, \eqref{lemma:limit-polar-projection-lower-bound} holds by \cref{cor:d-p-plus-smooth}.

    Condition \eqref{eq:limit-c_G} is guaranteed by \eqref{eq:limit-c_G-fstar}, while for \eqref{eq:limit-cKF}, we first of all observe that
    \[
        \bar B = \bar K_\Primal \inv{\bar G_\Primal} \bar K_\Primal^*\dir{\coDual},
    \]
    so \eqref{eq:kvbound} becomes
    \begin{equation}
        \label{eq:kvbound-fstar-lemma-orig}
        \kvbound(q|w; R) =
            \sup_{t>0} \inf_{\substack{((0, \dir{\coDual}), (0, \nu)) \in \DerivConeDXt[q|w]{t}{R}, \\ \dir{\coDual} \ne 0}} \frac{\norm{\bar B\dir{\coDual}-\nu}}{\norm{\dir{\coDual}}}.
    \end{equation}
    Here
    \[
        \DerivConeDXt[q|w]{t}{R} = \Union \left\{ (X \times V) \times (\{0\} \times \polar V) \mid V \in \widetilde{\DerivConeDXt[q|w]{t}{R}} \right\}
    \]
    with
    \[
        \widetilde{\DerivConeDXt[q|w]{t}{R}} = 
            \left\{
            {\DerivConeF[\alt{\Dual}|\alt{\coDual}-\bar h(\alt{\Primal})]}
            \,\middle|\,
            \begin{array}{ll}
            \alt{\coPrimal}=\grad G(\alt{\Primal})+\grad \bar h(\alt{\Primal})^*\alt{\Dual},& \norm{\alt{q}-q} < t, \\
            \alt{\coDual} \in \subdiff F^*(\alt{\Dual})-\bar h(\alt{\Primal}), &
             \norm{\alt{w}-w} < t 
            \end{array}
            \right\}.
    \]
    We derive for small $t>0$ and some constant $C > 1$ (depending on $(q, w)$) the inclusion
    \begin{equation}
        \begin{aligned}
        \widetilde{\DerivConeDXt[q|w]{t}{R}} 
            &
            =
             \left\{
            {\DerivConeF[\alt{\Dual}|\alt{\coDual}]}
            \,\middle|\,
            \begin{array}{l}
            \norm{\alt{u}-u}^2 + \norm{\alt{\Dual}-\Dual}^2 < t^2,\,
             \alt{\coDual} \in \subdiff F^*(\alt{\Dual}), \\
            \norm{\grad G(\alt{\Primal})+\grad \bar h(\alt{\Primal})^*\alt{\Dual}-\coPrimal}^2 +\norm{\alt{\coDual}-\bar h(\alt{\Primal})-\coDual}^2 < t^2
            \end{array}
            \right\}
            \\
            &
            \subset
             \left\{
            {\DerivConeF[\alt{\Dual}|\alt{\coDual}]}
            \,\middle|\,
            \begin{array}{l}
            \norm{\alt{\Primal}-\Primal}^2+\norm{\alt{\Dual}-\Dual}^2 < t^2,\,
             \alt{\coDual} \in \subdiff F^*(\alt{\Dual}), \\
            \norm{\alt{\coDual}-\bar h(\alt{\Primal})-\coDual}^2 < t^2
            \end{array}
            \right\}
            \\
            &
            \subset
             \left\{
            {\DerivConeF[\alt{\Dual}|\alt{\coDual}]}
            \,\middle|\,
            \begin{array}{l}
            \norm{\alt{\Primal}-\Primal} < t,\,
            \norm{\alt{\Dual}-\Dual} < t,\,
             \alt{\coDual} \in \subdiff F^*(\alt{\Dual}), \\
            \norm{\alt{\coDual}-\bar h(\Primal)-\coDual} - \norm{\bar h(\alt{\Primal})-\bar h(\Primal)} < t
            \end{array}
            \right\}
            \\
            &
            \subset
             \left\{
            {\DerivConeF[\alt{\Dual}|\alt{\coDual}]}
            \,\middle|\,
            \begin{array}{l}
            \norm{\alt{\Dual}-\Dual} < t,\,
             \alt{\coDual} \in \subdiff F^*(\alt{\Dual}), \\
            \norm{\alt{\coDual}-\bar h(\Primal)-\coDual} < C t
            \end{array}
            \right\}
            =: \mathcal{V}_C
            .
        \end{aligned}
    \end{equation}
    In the final step we have used the fact that $\bar h \in C^1(X; Y)$ is Lipschitz on $\B(u, t)$ for small $t>0$. Now
    \[
        \Union\{ V \times \polar V \mid V \in \mathcal{V}_C \}
        \subset \DerivConeDFt[\Dual|\coDual+\bar h(\Primal)]{Ct}.
    \]
    Since we take the supremum over  $t>0$ in \eqref{eq:kvbound}, the scaling factor $C>0$ disappears, and we deduce from \eqref{eq:kvbound-fstar} and \eqref{eq:kvbound-fstar-lemma-orig} that
    \[
        \kvbound(u|w; R) 
        \ge
        \bar \kvbound(u|w; R).
    \]
    Thus \eqref{eq:kvbound-fstar} guarantees \eqref{eq:kvbound}.
    Similarly, retracing the steps, we verify that
    \[
        \DerivConeDFt[\Dual|\coDual-\bar h(\Primal)]{t} = \Union\{ V \times \polar V \mid V \in \mathcal{V}_1 \}
        \quad\text{and}\quad
        \mathcal{V}_1 \subset \widetilde{\DerivConeDXt[q|w]{C_2 t}{R}}
    \]
    for some $C_2>1$.
    Indeed, using
    \[
        \grad G(\Primal)+\grad \bar h(\Primal)^*\Dual=\coPrimal,
    \]
    we compute for some $C_1>1$ that
    \begin{equation}
        \begin{aligned}
            \mathcal{V}_1
            & =
             \left\{
            {\DerivConeF[\alt{\Dual}|\alt{\coDual}]}
            \,\middle|\,
            \begin{array}{l}
            \norm{\alt{\Dual}-\Dual} < t,\,
             \alt{\coDual} \in \subdiff F^*(\alt{\Dual}), \\
            \norm{\alt{\coDual}-\bar h(\Primal)-\coDual} < t
            \end{array}
            \right\}
            \\
            & =
             \left\{
            {\DerivConeF[\alt{\Dual}|\alt{\coDual}]}
            \,\middle|\,
            \begin{array}{l}
            \norm{\alt{\Dual}-\Dual} < t,\,
             \alt{\coDual} \in \subdiff F^*(\alt{\Dual}), \\
            \norm{\grad G(\Primal)+\grad \bar h(\Primal)^*\Dual-\coPrimal} + \norm{\alt{\coDual}-\bar h(\Primal)-\coDual} < t
            \end{array}
            \right\}
            \\
            & \subset
             \left\{
            {\DerivConeF[\alt{\Dual}|\alt{\coDual}]}
            \,\middle|\,
            \begin{array}{l}
            \norm{\alt{\Dual}-\Dual} < t,\,
             \alt{\coDual} \in \subdiff F^*(\alt{\Dual}), \\
            \norm{\grad G(\Primal)+\grad \bar h(\Primal)^*\alt{\Dual}-\coPrimal} + \norm{\alt{\coDual}-\bar h(\Primal)-\coDual} < C_1 t
            \end{array}
            \right\}
            \\
            &
            \subset
            \left\{
            {\DerivConeF[\alt{\Dual}|\alt{\coDual}]}
            \,\middle|\,
            \begin{array}{l}
            \norm{\alt{\Primal}-\Primal} + \norm{\alt{\Dual}-\Dual} < t,\,
             \alt{\coDual} \in \subdiff F^*(\alt{\Dual}), \\
            \norm{\grad G(\alt{\Primal})+\grad \bar h(\alt{\Primal})^*\alt{\Dual}-\coPrimal}^2 + \norm{\alt{\coDual}-\bar h(\alt{\Primal})-\coDual}^2 < C_2 t^2
            \end{array}
            \right\}
            \\
            &
            \subset \widetilde{\DerivConeDXt[q|w]{C_2 t}{R}}.
        \end{aligned}
    \end{equation}
    Hence,
    \[
        \bar \kvbound(u|w; R) 
        \ge
        \kvbound(u|w; R),
    \]
    and in particular, \eqref{eq:kvbound} guarantees \eqref{eq:kvbound-fstar}.
    Our claims now follow from an application of \cref{lemma:limit-polar-projection-lower-bound}, since its continuity requirements on $\bar G$ and $\bar K$ follow from the assumptions on $\bar G$ and $\bar h$.
\end{proof}

\bigskip

In the remainder of this section, we apply \cref{thm:limit-polar-projection-lower-bound-fstar} to show several stability properties of saddle points to \eqref{eq:lagrangian}.

\subsection{Metric regularity of the linearized variational inclusion}

We begin with the simplest example of verifying $\ell_{H_{\gamma,\baseu}}(0|\realoptq)<\infty$ with fixed $\baseu=\realoptu$ for $\realoptq$ solving $0 \in H_{\gamma,\realoptu}(\realoptq)$. 
This is useful for showing convergence of the primal-dual algorithm of \cite{Valkonen:2014}.
By \cref{prop:dh}, we are in the setting of \cref{thm:limit-polar-projection-lower-bound-fstar}. Indeed, for $R=H_{\gamma,\baseu}$, we obtain an instance of \eqref{eq:dr-expr} with
\[
    \bar h(\alt{u}) = \grad K(\baseu)\alt{u}+c_\baseu.
\]
Furthermore, we also have
\[
    \grad^2 G(u)\coPrimal + \grad_u[\grad \bar h(u)^*\Dual]\coPrimal=\alpha I,
\]
so we may take $c_G=\alpha$ in \eqref{eq:limit-c_G-fstar}. If $\gamma>0$, \eqref{eq:limit-cKF-fstar} is trivially satisfied. By \cref{thm:limit-polar-projection-lower-bound-fstar}, we therefore obtain
\[
    \lipnum_{\inv H_{\gamma,\realoptu}}(0|\realoptq) < \infty. 
\]
Thus $\inv H_{\gamma,\realoptu}$ has the Aubin property at $(0, \realoptq)$ provided $\gamma>0$. We summarize these findings in the following proposition.
\begin{proposition}
    \label{prop:pde-metric-regularity}
    Let $G$ be as in \eqref{eq:pde-g-choice}, $K \in C^1(X; Y)$, and let $F^*$ satisfy \eqref{eq:pde-f-choice} and \eqref{eq:fstar-polar-form-huber}.
    Suppose that $\realoptq$ solves $0 \in H_{\gamma,\realoptu}(\realoptq)$ for some $\gamma \ge 0$.  Then $w \mapsto \inv H_{\gamma,\realoptu}(w)$ has the Aubin property at $(0|\realoptq)$ if and only if $\gamma>0$ or $\bar\kvbound(\realoptq|0; H_{\gamma,\realoptu})>0$.
\end{proposition}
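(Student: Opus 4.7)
The plan is to apply \cref{thm:limit-polar-projection-lower-bound-fstar} directly with $R \defeq H_{\gamma,\realoptu}$ at the point $(q,w) \defeq (\realoptq, 0)$. The definition \eqref{eq:h-def} of $H_{\gamma,\realoptu}$ matches the structural form \eqref{eq:limit-polar-projection-lower-bound-fstar-r} of $R$ upon setting $\bar h(\alt{u}) \defeq \grad K(\realoptu)\alt{u} + c_{\realoptu}$, so that $\grad \bar h(\alt u)^*\alt v = \grad K(\realoptu)^*\alt v$ and $\bar h(\alt u) = \grad K(\realoptu)\alt u + c_{\realoptu}$. \Cref{prop:dh} confirms both that $DH_{\gamma,\realoptu}$ has the required graphical derivative form and that the coderivative identity \eqref{eq:somesetmap-cod-d-upper-adjoint} holds. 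The hypothesis \eqref{eq:fstar-polar-form-huber} (on the Moreau--Yosida-regularized conjugate) provides the representation \eqref{eq:dfstar-expr} as an equality, so the biconditional case of the theorem will be available.

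To verify the coercivity condition \eqref{eq:limit-c_G-fstar}, I observe that $\bar h$ is affine, so $\grad \bar h(\alt u) = \grad K(\realoptu)$ is constant in $\alt u$ and consequently the mixed term $\grad_u[\grad \bar h(u)^*\Dual]$ vanishes identically. Combined with $\grad^2 G(u) = \alpha I$, which follows from the quadratic form of $G$ in \eqref{eq:pde-g-choice}, this yields
\[
    \iprod{\grad^2 G(u)\coPrimal + \grad_u[\grad \bar h(u)^*\Dual]\coPrimal}{\coPrimal} = \alpha \norm{\coPrimal}^2,
\]
so \eqref{eq:limit-c_G-fstar} holds with $c_G \defeq \alpha > 0$. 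The remaining hypotheses ($\bar h \in C^1(X;Y)$, $G \in C^2(X)$, $F^*$ an integral of a regular integrand) are immediate from the assumptions on $K$, $G$, and $F^*$.

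Invoking the equality case of \cref{thm:limit-polar-projection-lower-bound-fstar} now gives
\[
    \lipnum_{\inv H_{\gamma,\realoptu}}(0|\realoptq) < \infty
    \iff
    \max\bigl\{\gamma,\, \bar\kvbound(\realoptq|0; H_{\gamma,\realoptu})\bigr\} > 0,
\]
which, since the left-hand side is precisely the Aubin property of $\inv H_{\gamma,\realoptu}$ at $(0,\realoptq)$, is exactly the asserted equivalence. The substantive work has already been absorbed into \cref{thm:limit-polar-projection-lower-bound-fstar}; no real obstacle remains in this specialization, because the affine linearization wipes out the only potentially troublesome mixed Hessian term and the quadratic $G$ supplies uniform strong convexity $c_G = \alpha$ that is independent of the base point, leaving \eqref{eq:limit-cKF-fstar} as the sole nontrivial condition.
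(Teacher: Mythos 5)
Your proposal is correct and follows essentially the same route as the paper: specializing \cref{thm:limit-polar-projection-lower-bound-fstar} to $R=H_{\gamma,\realoptu}$ with the affine linearization $\bar h(\alt u)=\grad K(\realoptu)\alt u + c_{\realoptu}$, noting that the mixed Hessian term vanishes so $c_G=\alpha$, and invoking the equality case (available since \eqref{eq:fstar-polar-form-huber} gives \eqref{eq:dfstar-expr} as an equality) to obtain the stated equivalence. No gaps.
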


If $\gamma=0$, we have to prove existence of a lower bound $c_{K,\DerivCone}>0$ through $\bar\kvbound$. This is significantly more difficult. With $\baseu=\realoptu$, we use \eqref{eq:h-def} to compute
\[
    \bar h(\realoptu)=
    \grad K(\realoptu)\realoptu+c_\baseu=K(\realoptu)
    \quad\text{and}\quad
    \grad \bar h(\realoptu)=\grad K(\realoptu).
\]
Consequently, \eqref{eq:kvbound-fstar} can be expressed in the setting of this proposition as
\begin{equation}
    \label{eq:kvbound-fstar-metric}
    \begin{aligned}
    \bar\kvbound(\realoptq|0; H_{\realoptu}) & =
        \sup_{t>0} \inf\left\{ \frac{\norm{\inv\alpha\grad K(\realoptu)\grad K(\realoptu)^* \dcVar-\nu}}{\norm{\dcVar}}
            \,\middle|\, (\dcVar, \nu) \in \DerivConeDFt[\realopt{\Dual}|K(\realoptu)]{t},\, \dcVar \ne 0 \right\}
        \\
        & =
        \inv\alpha\sup_{t>0} \inf\left\{ \frac{\norm{\grad K(\realoptu)\grad K(\realoptu)^* \dcVar-\nu}}{\norm{\dcVar}}
            \,\middle|\, 
            \begin{array}{l}
            0 \ne \dcVar \in \DerivConeF[\alt{\Dual}|\alt{\coDual}],\, \nu \in \polar{\DerivConeF[\alt{\Dual}|\alt{\coDual}]}, \\
            \alt{\coDual} \in \subdiff F^*(\alt{\Dual}),\, \norm{\alt{\Dual}-\realopt{\Dual}} < t,\\ \norm{\alt{\coDual}-K(\realoptu)} < t
            \end{array}
            \right\}.
    \end{aligned}
\end{equation}
We will return to the issue of verifying -- or disproving -- the lower bound on $\bar\kvbound$ with specific examples in \cref{sec:stability_paramid}.

\subsection{Stability with respect to data}

We now want to study the stability of the condition $0 \in H_{\realoptu}(\realoptq)$ with respect to perturbation of the data $y^\delta$. This of course only makes sense if we equate the base point $\baseu$ in $H_\baseu$ to the solution $\realoptu$. Therefore, we define for variations $\dir{y}$ in the data
\[
    \altsetmap_{\dir{y}}(u,\Dual) \defeq P(u, \Dual)+h_{\dir{y}}(u,\Dual)
\]
with
\[
    h_{\dir{y}}(u,\Dual) \defeq
        \begin{pmatrix}
            \grad K(u)^*\Dual \\
            \dir{y}-K(u)
        \end{pmatrix}
    \quad\text{and}\quad
    P(u, \Dual) \defeq
        \begin{pmatrix}
            \grad G(u) \\
            \subdiff F^*(\Dual) \\
        \end{pmatrix}.
\]
We remark that due to the linear dependence of the optimality conditions $0 \in \altsetmap_{\dir{y}}(\Primal,\Dual)$ on $\Delta y$, the stability with respect to $\Delta y$ can be seen as a form of tilt-stability \cite{Rockafellar:1998b,Mordukhovich:2012,Drusvyatskiy:2013,Eberhard:2012,Lewis:2013,Mordukhovich:2013,Levy:2000,Mordukhovich:2014} for saddle-point systems.

Observe now that
\[
    \altsetmap_{\dir{y}}(q)=\basesetmap(q)+\begin{pmatrix}0 \\ \dir{y}\end{pmatrix},
\]
and in particular that $\altsetmap_{0}=\basesetmap$.
Thus \eqref{eq:sensitivity1} with $\somesetmap=\basesetmap$ and $w=(0, \dir{y})$ yields
\begin{equation}
    \label{eq:sensitivity2}
    \inf_{q\,:\, 0 \in \altsetmap_{\dir{y}}(q)} \norm{\realoptq-q}
        \le \lipnum_{\inv \basesetmap}(0|\realoptq) \norm{\dir{y}}
    \quad\text{ whenever }\quad
     \norm{\dir{y}} \le \rho.
\end{equation}
If $K \in C^2(X; Y)$, by \cref{prop:dr0}, we can compute $D\basesetmap(q|w)$.
In fact, with
\[
    \bar h(u) = K(u),
\]
we see that $\basesetmap$ is an instance of the class covered by \cref{thm:limit-polar-projection-lower-bound-fstar}. Its application directly yields the following proposition.
\begin{proposition}
    \label{prop:pde-data-stability}
    Let $K \in C^2(X; Y)$ and suppose that $F^*$ satisfies  \eqref{eq:pde-f-choice} and \eqref{eq:fstar-polar-form-huber}.
    Denote by $\realoptq_{\dir{y}}$ a solution to the optimality conditions \eqref{eq:oc} for the problem
    \[
        \min_u \max_\Dual \frac{\alpha}{2}\norm{u}^2
        + \iprod{K(u)-\dir{y}}{\Dual} - F_\gamma^*(\Dual).
    \]
    Suppose that a solution $\realoptq=\realoptq_0$ exists, and there exists a constant $c_G>0$ such that
    \begin{equation}
        \label{eq:pde-cg}
        \alpha\norm{\coPrimal}^2+\iprod{\grad_u [\grad K(\realoptu)^*\realopt{\Dual}]\coPrimal}{\coPrimal} \ge c_G \norm{\coPrimal}^2 \qquad (\coPrimal \in X).
    \end{equation}
    If $\gamma>0$ or $\bar\kvbound(\realoptq|0; \basesetmap)>0$, then for some $\rho, \ell > 0$ there exist solutions $\realoptq_{\dir{y}}$ with
    \begin{equation}
        \label{eq:sensitivity3}
        \norm{\realoptq-\realoptq_{\dir{y}}}
            \le \lipnum \norm{\dir{y}}
        \quad\text{ whenever }\quad
         \norm{\dir{y}} \le \rho.
    \end{equation}
\end{proposition}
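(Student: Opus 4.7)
The plan is to derive this proposition essentially as a direct corollary of \cref{thm:limit-polar-projection-lower-bound-fstar} applied to the mapping $R = \basesetmap$, combined with the reformulation $\altsetmap_{\dir y}(q) = \basesetmap(q) + (0, \dir y)$ and the general sensitivity consequence \eqref{eq:sensitivity1} of the Aubin property.

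First, I would verify that $\basesetmap$ fits the structural template \eqref{eq:limit-polar-projection-lower-bound-fstar-r} with the choice $\bar h(u) = K(u)$. Indeed, comparing with \eqref{eq:r0-def}, the map splits as $\basesetmap = P + h$ with $h(u,\Dual) = (\grad K(u)^* \Dual, -K(u))$, which is exactly the required form, and the regularity hypotheses $\bar h \in C^1(X;Y)$ and $G \in C^2(X)$ hold by assumption. The structural inclusion \eqref{eq:dfstar-expr} holds by the assumed form \eqref{eq:fstar-polar-form-huber} of $F^*$. The positivity assumption \eqref{eq:limit-c_G-fstar} translates, via $\grad^2 G(u) = \alpha I$ and $\grad_u[\grad \bar h(u)^* \Dual] = \grad_u[\grad K(u)^* \Dual]$, into precisely \eqref{eq:pde-cg} at $(u,\Dual) = (\realoptu, \realopt\Dual)$; continuity of this lower bound in a neighbourhood follows from $K \in C^2$. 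Finally, the alternative \eqref{eq:limit-cKF-fstar} is our hypothesis $\max\{\gamma, \bar\kvbound(\realoptq|0;\basesetmap)\} > 0$. Thus \cref{thm:limit-polar-projection-lower-bound-fstar} yields $\ell \defeq \lipnum_{\inv \basesetmap}(0|\realoptq) < \infty$, i.e., $\basesetmap$ is metrically regular at $\realoptq$ for $0$.

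Next, I would convert this metric regularity into the stated sensitivity bound. Since $\altsetmap_{\dir y}(q) = \basesetmap(q) + (0, \dir y)$, the condition $0 \in \altsetmap_{\dir y}(q)$ is equivalent to $(0,-\dir y) \in \basesetmap(q)$. Applying the definition \eqref{eq:inverse-aubin} of metric regularity at $(\realopt q, \realopt w) = (\realoptq, 0)$ with the choice $q \defeq \realoptq$ and $w \defeq (0, -\dir y)$, and using $0 \in \basesetmap(\realoptq)$, we obtain
\[
    \inf\{\norm{\realoptq - p} : (0,-\dir y) \in \basesetmap(p)\} \le \ell \norm{(0,-\dir y) - 0} = \ell \norm{\dir y},
\]
valid for $\norm{\dir y} \le \rho$ with the $\rho, \delta > 0$ from the definition. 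The infimum is attained (or arbitrarily approached) by some $\realoptq_{\dir y}$ solving $0 \in \altsetmap_{\dir y}(\realoptq_{\dir y})$, which yields \eqref{eq:sensitivity3}.

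The expected main obstacle is not in the proof of this proposition itself—which is essentially a template specialization—but in the preceding verification that the hypotheses of \cref{thm:limit-polar-projection-lower-bound-fstar} are cleanly assembled from the stated data. In particular, some care is required to observe that (i) when $\gamma = 0$, the verification of $\bar\kvbound(\realoptq|0;\basesetmap) > 0$ is an assumption, not a conclusion (and indeed a nontrivial one, to be examined in the subsequent section), and (ii) the positivity hypothesis \eqref{eq:pde-cg} must hold with a uniform constant in a neighbourhood of $\realoptu$, which the $C^2$-regularity of $K$ delivers automatically up to shrinking $c_G$. No further technicalities arise, and existence of $\realoptq_{\dir y}$ comes for free from the Aubin property of $\inv \basesetmap$.
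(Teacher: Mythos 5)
Your proposal is correct and follows essentially the same route as the paper: the paper likewise observes $\altsetmap_{\dir y}(q)=\basesetmap(q)+(0,\dir y)$, invokes the sensitivity estimate \eqref{eq:sensitivity1} for $\basesetmap$, and obtains $\lipnum_{\inv\basesetmap}(0|\realoptq)<\infty$ by applying \cref{thm:limit-polar-projection-lower-bound-fstar} with $\bar h=K$, under which \eqref{eq:limit-c_G-fstar} reduces to \eqref{eq:pde-cg} and \eqref{eq:limit-cKF-fstar} to the stated alternative. Your extra care with the sign $w=(0,-\dir y)$ and with the neighbourhood-uniformity of \eqref{eq:pde-cg} is consistent with, and slightly more explicit than, the paper's one-line application.
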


Note that for $\bar\kvbound(\realoptq|0; \basesetmap)$, we obtain from \eqref{eq:kvbound-fstar} exactly the same expression as for $\bar\kvbound(\realoptq|0; H_{\realoptu})$ in \eqref{eq:kvbound-fstar-metric}, i.e.,
\begin{equation}
    \label{eq:kvbound-fstar-data}
    \bar\kvbound(\realoptq|0; \basesetmap) 
        =
        \inv\alpha\sup_{t>0} \inf\left\{ \frac{\norm{\grad K(\realoptu)\grad K(\realoptu)^* \dcVar-\nu}}{\norm{\dcVar}}
            \,\middle|\, 
            \begin{array}{l}
            0 \ne \dcVar \in \DerivConeF[\alt{\Dual}|\alt{\coDual}],\, \nu \in \polar{\DerivConeF[\alt{\Dual}|\alt{\coDual}]}, \\
            \alt{\coDual} \in \subdiff F^*(\alt{\Dual}),\, \norm{\alt{\Dual}-\realopt{\Dual}} < t,\\ \norm{\alt{\coDual}-K(\realoptu)} < t
            \end{array}
            \right\}.
\end{equation}

\subsection{Stability with respect to the Moreau--Yosida parameter}

Finally, we study the stability of the regularized optimality condition $0 \in H_{\realoptu,\gamma}(\realoptq)$ with respect to the Moreau--Yosida parameter $\gamma$. With $P$ as in the previous section, we now set
\[
    \altsetmap_{\gamma}(u,\Dual) \defeq P_\gamma(u, \Dual)+h_{\gamma}(u,\Dual),
\]
with
\[
    h_{\gamma}(u,\Dual) \defeq
        \begin{pmatrix}
            \grad K(u)^*\Dual \\
            \gamma \Dual -K(u)
        \end{pmatrix}
    \quad\text{and}\quad
    P_\gamma(u, \Dual) \defeq
        \begin{pmatrix}
            \grad G(u) \\
            \subdiff F_\gamma^*(\Dual) \\
        \end{pmatrix}.
\]
Observe that $\altsetmap_0=\basesetmap$.
Let $\realoptq$ solve $0 \in \basesetmap(\realoptq)$.
Now \eqref{eq:inverse-aubin} applied to $\altsetmap_\gamma$ at $\realoptq$ and $\realopt{w} \in \altsetmap_\gamma(\realoptq)$ gives with $w=0$ and $q=\realoptq$ the estimate
\begin{equation}
    \label{eq:moreau-yosida-sensitivity-1}
    \inf_{p\,:\, 0 \in \altsetmap_\gamma(p)} \norm{p-\realoptq}
        \le \lipnum_{\inv \altsetmap_\gamma}(\realopt{w}|\realoptq) \norm{\altsetmap_\gamma(\realoptq)}
    \quad\text{ whenever }\quad
     \norm{\realopt{w}} \le \rho
     .
\end{equation}
Since $0 \in \basesetmap(\realoptq)$, we deduce that $\realopt{w}_\gamma \defeq (0, \gamma\realopt{\Dual}) \in \altsetmap_\gamma(\realoptq)$. 
This quickly leads to the following proposition.
\begin{proposition}
    \label{prop:pde-moreau-yosida-stability}
    Let $K \in C^2(X; Y)$, and suppose $F^*$ satisfies  \eqref{eq:pde-f-choice} and \eqref{eq:fstar-polar-form-huber}.
    Denote by $\realoptq_{\gamma}$ a solution to the optimality conditions \eqref{eq:oc} for the problem
    \[
        \min_u \max_\Dual \frac{\alpha}{2}\norm{u}^2
            + \iprod{K(u)}{\Dual} - F_\gamma^*(\Dual).
    \]
    Suppose a solution $\realoptq=\realoptq_0$ exists, $\bar\kvbound(\realoptq|0; \basesetmap)>0$,  and \eqref{eq:pde-cg} holds.
    Then for some $\rho, \ell > 0$ there exist solutions $\realoptq_{\gamma}$ with
    \begin{equation}
        \label{eq:moreau-yosida-sensitivity-2}
        \norm{\realoptq-\realoptq_{\gamma}}
            \le \lipnum \gamma
        \quad\text{ whenever }\quad
         0 \le \gamma \le \rho.
    \end{equation}
\end{proposition}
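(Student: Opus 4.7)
The plan is to apply the Aubin property of $\inv\altsetmap_\gamma$ at the base point $(\realopt w_\gamma, \realoptq)$ with $\realopt w_\gamma \defeq (0, \gamma\realopt\Dual)$, exactly as set up in the paragraph preceding the statement. Substituting $q = \realoptq$ and $w = 0$ into \eqref{eq:moreau-yosida-sensitivity-1} and using $\realopt w_\gamma \in \altsetmap_\gamma(\realoptq)$ to bound $\norm{\altsetmap_\gamma(\realoptq)} \le \norm{\realopt w_\gamma} = \gamma\norm{\realopt\Dual}$ immediately yields
\[
    \inf_{p\,:\, 0 \in \altsetmap_\gamma(p)} \norm{p - \realoptq}
    \le \lipnum_{\inv\altsetmap_\gamma}(\realopt w_\gamma | \realoptq)\, \gamma \norm{\realopt\Dual}.
\]
Selecting a near-infimum achiever $\realoptq_\gamma$ then gives \eqref{eq:moreau-yosida-sensitivity-2}, provided $\lipnum_{\inv\altsetmap_\gamma}$ admits a bound independent of $\gamma$ for $\gamma$ near zero.

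To control $\lipnum_{\inv\altsetmap_\gamma}(\realopt w_\gamma|\realoptq)$, I would apply \cref{thm:limit-polar-projection-lower-bound-fstar} to $\altsetmap_\gamma$ with $\bar h(u) = K(u)$, absorbing the Moreau--Yosida regularization into the $P_\gamma$ component so that $F_\gamma^*$ satisfies \eqref{eq:dfstar-expr}. The assumption $K \in C^2(X;Y)$ and the quadratic form of $G$ supply the smoothness of $\bar h$ and $G$, while hypothesis \eqref{eq:pde-cg} provides \eqref{eq:limit-c_G-fstar} with a $\gamma$-independent constant $c_G$.

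The main obstacle is verifying \eqref{eq:limit-cKF-fstar} uniformly in $\gamma$. Relying on the $\gamma > 0$ branch alone would only give, through case~\ref{item:general-polar-projection-lower-bound-i} of \cref{lemma:general-polar-projection-lower-bound}, a constant $c(\gamma, c_G)$ that may collapse as $\gamma \downto 0$. I therefore transfer the hypothesis $\bar\kvbound(\realoptq|0;\basesetmap) > 0$ to a uniform lower bound on $\bar\kvbound(\realoptq|\realopt w_\gamma;\altsetmap_\gamma)$: inspecting \eqref{eq:kvbound-fstar}, both quantities involve the same operator $\bar B = \grad K(\realoptu)\inv{(\alpha I + \grad_u[\grad K(\realoptu)^*\realopt\Dual])}\grad K(\realoptu)^*$, and differ only in that the second-coordinate base point of $\DerivConeDFt$ shifts from $K(\realoptu)$ to $\gamma\realopt\Dual + K(\realoptu)$, an $O(\gamma)$ perturbation. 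Choosing $t_0 > 0$ small enough that the infimum at radius $t_0$ in \eqref{eq:kvbound-fstar} for $\basesetmap$ already exceeds $\bar\kvbound(\realoptq|0;\basesetmap)/2$, and restricting to $\gamma$ with $\gamma\norm{\realopt\Dual} \le t_0/2$, the triangle inequality gives $\DerivConeDFt[\realopt\Dual|\gamma\realopt\Dual + K(\realoptu)]{t_0/2} \subset \DerivConeDFt[\realopt\Dual|K(\realoptu)]{t_0}$, so the same lower bound transfers and $\bar\kvbound(\realoptq|\realopt w_\gamma;\altsetmap_\gamma) \ge \bar\kvbound(\realoptq|0;\basesetmap)/2 > 0$. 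Case~\ref{item:general-polar-projection-lower-bound-ii} of \cref{lemma:general-polar-projection-lower-bound} then supplies a bound on $c$ depending only on $\norm{\grad K(\realoptu)}$, $c_G$, and this lower bound, yielding $\lipnum_{\inv\altsetmap_\gamma}(\realopt w_\gamma|\realoptq) \le 1/c$ uniformly in $\gamma$, which closes the argument.
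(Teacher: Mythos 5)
Your proposal is correct and follows essentially the same route as the paper's proof: the same reduction via \eqref{eq:moreau-yosida-sensitivity-1} with $\realopt{w}_\gamma=(0,\gamma\realopt{\Dual})$, the same application of \cref{thm:limit-polar-projection-lower-bound-fstar} to $\altsetmap_\gamma$ with $\bar h=K$, and the same use of \eqref{eq:pde-cg} (which is independent of $\gamma$). The only, harmless, difference is in transferring the bound on $\bar\kvbound$: the paper notes that the $\gamma\realopt{\Dual}$ shift of the base point is exactly cancelled by the $\gamma$-shift in $\subdiff F^*_\gamma$, giving $\bar\kvbound(\realoptq|\realopt{w}_\gamma;\altsetmap_\gamma)=\bar\kvbound(\realoptq|0;\basesetmap)$ exactly, whereas your neighborhood-containment argument yields only the factor-$1/2$ lower bound, which is equally sufficient for the conclusion.
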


\begin{proof}    
    We may assume that $\norm{\realopt{v}} \ne 0$, because otherwise $\realoptq_\gamma=\realoptq$.
    With $\realopt{w}_\gamma = (0, \gamma\realopt{\Dual})$, as above, we expand \eqref{eq:moreau-yosida-sensitivity-1} into
    \begin{equation}
        \inf_{p\,:\, 0 \in \altsetmap_\gamma(p)} \norm{p-\realoptq}
            \le \gamma \lipnum_{\inv \altsetmap_\gamma}(\realopt{w}_\gamma|\realoptq) \norm{\realopt{\Dual}},
        \quad\text{ whenever }\quad
         0 \le \gamma \le \norm{\realopt{\Dual}}^{-1}\rho.
    \end{equation}
    In order to derive \eqref{eq:moreau-yosida-sensitivity-2}, we only need to show the existence of a finite constant $\lipnum_{\inv \altsetmap_\gamma}(\realopt{w}_\gamma|\realoptq) < \infty$ and integrate $\norm{\realopt{\Dual}}$ into the constant.
    For this, we simply apply \cref{thm:limit-polar-projection-lower-bound-fstar} to $R=\altsetmap_\gamma$ with $\bar h(\alt{u}) = K(\alt{u})$, 
    and observe that $\bar\kvbound(\realoptq|\realopt{w}_\gamma; \altsetmap_\gamma)=\bar\kvbound(\realoptq|0; \basesetmap)$. This follows from the fact that the expression \eqref{eq:kvbound-fstar} only depends on $\gamma$ through the base point $\eta+\bar h(u)$, which in this case is $\gamma\realopt{\Dual}+(-\gamma\realopt{\Dual}+K(\realoptu))=K(\realoptu)$. Observe that $\eqref{eq:pde-cg}$ is equally independent of $\gamma$. 
    We can thus bound  $\lipnum_{\inv \altsetmap_\gamma}(\realopt{w}_\gamma|\realoptq)$ from above uniformly in $\gamma\in[0,\rho]$.
\end{proof}

\section{Application to parameter identification problems}\label{sec:stability_paramid}

We now discuss the possibility of satisfying the assumptions of the preceding propositions in the context of the motivating parameter identification problems \eqref{eq:l1fit_problem} and \eqref{eq:linffit_problem}. Since this will depend on the specific structure of the parameter-to-observation mapping $S$,  we consider as a concrete example the problem of recovering the potential term in an elliptic equation. 

Let $\Omega\subset\mathbb{R}^d$ be
an open bounded domain with a Lipschitz boundary $\partial\Omega$. For a given parameter $u\in \{v\in L^\infty(\Omega):v\geq \eps\}\eqcolon U\subset X\defeq L^2(\Omega)$, denote by $S(u)\defeq y\in H^1(\Omega)\subset L^2(\Omega)\eqcolon Y$ the weak solution of 
\begin{equation}\label{eq:forward}
    \inner{\nabla y,\nabla v} + \inner{uy,v} = \inner{f,v} \qquad (v\in H^1(\Omega)).
\end{equation}
This operator has the following useful properties \cite{Kroener:2009a}:
\begin{enumerate}[label=(\textsc{a}\arabic*), ref=\textsc{a}\arabic*]
    \item The operator $S$ is uniformly bounded in $U\subset{X}$ and completely
        continuous:
        If for $u\in U$, the sequence $\{u_n\}\subset U$ satisfies
        $u_n \wkto u$ in ${X}$, then
        \begin{equation}
            S(u_n)\to  S(u) \quad\text{ in } Y.
        \end{equation}
    \item $S$ is twice Fr\'echet differentiable.
    \item\label{ass:a3} There exists a constant $C>0$ such that 
        \begin{equation}
            \norm{\grad{S}(u)h}_{L^2}\leq C\norm{h}_X\qquad (u\in U,h\in X).
        \end{equation}
    \item\label{ass:a4} There exists a constant $C>0$ such that 
        \begin{equation}
            \norm{\grad^2 S(u)(h,h)}_{L^2} \le C \norm{h}_X^2\qquad (u\in U,h\in X).
        \end{equation}
\end{enumerate}
Furthermore, from the implicit function theorem, the directional Fréchet derivative $\grad{S}(u)h$ for given $h\in X$ can be computed as the solution $w\in H^1(\Omega)$ to
\begin{equation}\label{eq:forward_lin}
    \inner{\nabla w,\nabla v} + \inner{uw,v} = \inner{-yh,v} \qquad(v\in H^1(\Omega)).
\end{equation}
Similarly, the directional adjoint derivative $\grad{S}(u)^*h$ is given by $yz$, where $z\in H^1(\Omega)$ solves
\begin{equation}\label{eq:forward_adj}
    \inner{\nabla z,\nabla v} + \inner{uz,v} = \inner{-h,v} \qquad(v\in H^1(\Omega)).
\end{equation}
Similar expressions hold for $\grad^2{S}(u)(h_1,h_2)$ and $\grad(\grad{S}(u)^*h_1)h_2$. Hence, assumptions (\ref{ass:a3}--\ref{ass:a4}) hold for $\grad{S}^*$ and $\grad_u(\grad{S}(u)^*v)$ for given $v$ as well.

Other operators satisfying the above assumptions are mappings from a Robin or diffusion coefficient to the solution of the corresponding elliptic partial differential equation \cite{ClasonJin:2011}.

\subsection{\texorpdfstring{$\scriptstyle L^1$}{L¹} fitting}
\label{sec:stability-l1}

Let us first consider the $L^1$ fitting problem \eqref{eq:l1fit_problem}.
We are in the setting of \eqref{eq:pde-f-choice}--\eqref{eq:pde-g-choice}.
More specifically now
\[
    F^*(\Dual)=\int_\Omega f^*(\Dual(x)) \,d x
    \quad
    \text{for}
    \quad
    f^*(\FdVar) = \ind_{[-1, 1]}(\FdVar),
\]
where we allow the integral to be possibly infinite if the integrand does not satisfy $f^* \circ \Dual \in L^1(\Omega)$. 
We also have
\[
    G(u)=\int_\Omega g(u(x)) \,d x
    \quad
    \text{for}
    \quad
    g(\FdVar) = \frac{\alpha}{2} \abs{z}^2,
\]
as well as
\[
    K(u)=S(u)-\theData.
\]
Thus, the saddle-point conditions \eqref{eq:oc-h} for \eqref{eq:l1fit_problem} are given by
\begin{equation}
    0\in \begin{pmatrix}
        \alpha \realoptu +\grad S(\realoptu) \realoptpsi\\
        \partial\ind_{[-1,1]}(\realoptpsi) + S(\realoptu)-\theData
    \end{pmatrix}.
\end{equation}

\paragraph{Metric regularity}
We first address metric regularity of $H_\realoptu$ (\cref{prop:pde-metric-regularity}) when $\gamma=0$. Recall from \cref{prop:pde-metric-regularity} that in this case we need to show that 
\begin{equation}
    \label{eq:l1-fitting-kvbound}
    \begin{aligned}[t]
    \bar\kvbound(\realoptq|0; H_{\realoptu})
    &=
    \sup_{t>0} \inf\left\{ \frac{\norm{\inv\alpha\grad S(\realoptu)\grad S(\realoptu)^* \dcVar-\nu}}{\norm{\dcVar}}
        \,\middle|\, (\dcVar, \nu) \in \DerivConeDFt[\realopt{\Dual}|\theData-S(\realoptu)]{t},\, \dcVar \ne 0 \right\}
    \\
     &=
    \inv\alpha\sup_{t>0} \inf\left\{ \frac{\norm{\grad S(\realoptu)\grad S(\realoptu)^* \dcVar-\nu}}{\norm{\dcVar}}
        \,\middle|\, 
        \begin{array}{l}
        0 \ne \dcVar \in \DerivConeF[\alt{\Dual}|\alt{\coDual}],\, \nu \in \polar{\DerivConeF[\alt{\Dual}|\alt{\coDual}]}, \\
        \alt{\coDual} \in \subdiff F^*(\alt{\Dual}),\, \norm{\alt{\Dual}-\realopt{\Dual}} < t,\\ \norm{\alt{\coDual}-(\theData-S(\realoptu))} < t
        \end{array}
        \right\}
        \\
        &>0.
        \end{aligned}
\end{equation}
Let us try to force $\dcVar(x) \ne 0$ as much as possible in \eqref{eq:l1-fitting-kvbound}. From \cref{cor:g-form-indicator}, we obtain that $\dcVar \in \DerivConeF[\alt{\Dual}|\alt{\coDual}]$ satisfies
\begin{equation}
    \label{eq:l1fitting-z}
    \dcVar(x) \in
    \begin{cases}
        \{0\}, & \abs{\alt\Dual(x)}=1 \text{ and } \alt\coDual(x) \ne 0,\\
        -\sign \alt\Dual(x) [0, \infty), & \abs{\alt\Dual(x)}=1 \text{ and } \alt\coDual(x) = 0,\\
        \R, & \abs{\alt\Dual(x)} < 1 \text{ and } \alt\coDual(x) = 0,\\
    \end{cases}
\end{equation}
while $\nu \in \polar{\DerivConeF[\alt{\Dual}|\alt{\coDual}]}$ satisfies
\begin{equation}
    \label{eq:l1fitting-nu}
    \nu(x) \in
    \begin{cases}
        \R, & \abs{\alt\Dual(x)}=1 \text{ and } \alt\coDual(x) \ne 0,\\
        \sign \alt\Dual(x) [0, \infty), & \abs{\alt\Dual(x)}=1 \text{ and } \alt\coDual(x) = 0,\\
        \{0\}, & \abs{\alt\Dual(x)} < 1 \text{ and } \alt\coDual(x) = 0.\\
    \end{cases}
\end{equation}
 Therefore, $\dcVar(x) \ne 0$ can only happen if $\alt\coDual(x) = 0$. If
\begin{equation}
    \label{eq:l1-unreachable}
    \realopt\coDual \defeq \theData-S(\realoptu) \ne 0 \qquad (x \in\Omega),
\end{equation}
that is, if the data is reached almost nowhere, then the condition $\norm{\realopt\coDual-\alt\coDual}<t$ gives for any $\eps>0$ for small enough $t>0$ the estimate
\[
    \L^d(\{x \in \Omega \mid \alt\coDual(x)=0\})<\eps.
\]
In consequence,
\[
    \L^d(\{x \in \Omega \mid \dcVar(x) \ne 0\})<\eps.
\]
With this, we deduce that
\[
    \norm{\dcVar}_{L^1(\Omega)}
    \le
    \norm{\chi_{\{\dcVar \ne 0\}}}_{L^2(\Omega)}
    \norm{\dcVar}_{L^2(\Omega)}
    \le 
    \sqrt{\eps} \norm{\dcVar}_{L^2(\Omega)}.
\]
We furthermore have that 
\[
    \norm{\grad S(\realoptu)^* \dcVar} \le C \norm{\dcVar}_{L^1(\Omega)};
\]
this follows from the fact that $\grad S(\realoptu):W^{-1,s'}(\Omega)\to C(\overline\Omega)$ for any $s'>d$ due to the regularity of $\partial\Omega$ and $\realoptu$ (see, e.g., \cite[Thm.~6.3]{Griepentrog:2001}) and hence that $\grad S(\realoptu)^*:C(\Omega)^*\to W^{1,s}(\Omega)$ for $s<d$ together with the embeddings $L^1(\Omega)\hookrightarrow C(\overline\Omega)^*$ and $W^{1,s}(\Omega)\hookrightarrow L^2(\Omega)$ for $s\geq 1$ ($d=2$) or $s\geq 6/5$ ($d=3$).
An application of these estimates with $\nu=0$ in \eqref{eq:l1-fitting-kvbound} yields
\[
    \bar\kvbound(\realoptq|0; H_{\realoptu})
    \le \inv\alpha C \norm{\grad S(\realoptu)} \sqrt{\eps}.
\]
Letting $\eps \downto 0$, we deduce that
\[
    \bar\kvbound(\realoptq|0; H_{\realoptu})
    =0.
\]

If, on the other hand, the data is reached on a set $E$ of positive measure, i.e., $\realopt\coDual=0$ on $E$, we may choose $\dcVar$ freely on $E$. However, the lower bound
\[
    \norm{\grad S(\realoptu)^* \dcVar} \ge c \norm{\dcVar}
    \qquad (\dcVar \in L^2(E)),
\]
does not hold in general (take any orthonormal basis of $L^2(E)$, which converges weakly but not strongly to zero, and use the fact that $\grad S(u)$ is a compact operator from $L^2(\Omega)$ to $L^2(\Omega)$ due to the Rellich--Kondrachev embedding theorem).
Again, $\bar\kvbound(\realoptq|0; H_{\realoptu})=0$.

Therefore by \cref{prop:pde-metric-regularity}, \emph{there is no metric regularity without some sort of regularization}.
On the other hand, with Moreau--Yosida regularization, i.e., for $\gamma>0$, we always have metric regularity of $H_\realoptu$ at $(\realoptq, 0)$ by the same proposition.

\paragraph{Data stability}

The situation is very similar for stability with respect to data (\cref{prop:pde-data-stability}) when $\gamma=0$. Comparing \eqref{eq:kvbound-fstar-metric} and \eqref{eq:kvbound-fstar-data}, we see that we have to study whether
\[
    \bar\kvbound(\realoptq|0; \basesetmap) = \bar \kvbound(\realoptq|0; H_{\realoptu})>0.
\]
Hence, we again cannot have data stability without Moreau--Yosida regularization. With regularization, i.e., for $\gamma>0$, we still need to prove \eqref{eq:pde-cg}. Using the reverse triangle inequality, the boundedness of the dual variable $\realoptpsi(x)\in [-1,1]$ due to the choice of $F^*$, and assumption \eqref{ass:a4}, we have that
\begin{equation}
        \alpha\norm{\coPrimal}^2+\iprod{\grad_u [\grad S(\realoptu)^*\realopt{\Dual}]\coPrimal}{\coPrimal} \ge (\alpha - C)\norm{\coPrimal}^2 \ge 
        c_G \norm{\coPrimal}^2
\end{equation}
for $\alpha$ sufficiently large and hence data stability.

\paragraph{Stability with respect to $\gamma$}

Since \cref{prop:pde-moreau-yosida-stability} holds under exactly the same conditions as \cref{prop:pde-data-stability}, we deduce that there is no stability with respect to the Moreau--Yosida parameter at $\gamma=0$. This is to be expected, as any addition of regularization will, whenever $\realopt\coDual(x)=0$, immediately force $\Dual(x)=0$.
At a point $\gamma>0$, the stability can be proved similarly to the arguments in \cref{prop:pde-moreau-yosida-stability}.

\subsection{\texorpdfstring{$\scriptstyle L^\infty$}{L∞} fitting}

Let us now consider the $L^\infty$ fitting problem \eqref{eq:linffit_problem}.
We are again in the setting of \eqref{eq:pde-f-choice}--\eqref{eq:pde-g-choice},
this time with
\[
    F^*(\Dual)=\int_\Omega f^*(\Dual(x)) \,d x
    \quad
    \text{for}
    \quad
    f^*(\FdVar) = \noise\abs{z},
\]
and $G$ and $K$ as in the previous subsection. Hence, the saddle-point conditions \eqref{eq:oc-h} are now given by
\begin{equation}
    0\in \begin{pmatrix}
        \alpha \realoptu +\grad S(\realoptu) \realoptpsi\\
        \delta \sign(\realoptpsi) + S(\realoptu)-\theData
    \end{pmatrix}.
\end{equation}

\paragraph{Metric regularity}

Again, for metric regularity of $H_\realoptu$  (\cref{prop:pde-metric-regularity}) we need to show 
\[
    \bar\kvbound(\realoptq|0; H_{\realoptu})>0.
\]

Let us force again $z(x)\ne 0$. From \cref{cor:g-form-l1norm}, we obtain that $\dcVar \in \DerivConeF[\alt{\Dual}|\alt{\coDual}]$ satisfies
\begin{equation}
    \label{eq:linffitting-z}
    \dcVar(x) \in
    \begin{cases}
        \{0\}, &  \abs{\alt\coDual(x)} \ne \delta,\\
        \sign \alt\coDual(x) [0, \infty), & \abs{\alt\coDual(x)}=\delta \text{ and } \alt\Dual(x) = 0,\\
        \R, & \abs{\alt\coDual(x)} = \delta \text{ and } \alt\Dual(x) \neq 0,
    \end{cases}
\end{equation}
while $\nu \in \polar{\DerivConeF[\alt{\Dual}|\alt{\coDual}]}$ satisfies
\begin{equation}
    \label{eq:linffitting-nu}
    \nu(x) \in
    \begin{cases}
        \R, & \abs{\alt\coDual(x)}\neq \delta \text{ and } \alt\Dual(x) = 0,\\
        \sign \alt\Dual(x) (-\infty, 0], & \abs{\alt\coDual(x)}= \delta \text{ and } \alt\Dual(x) = 0,\\
        \{0\}, & \alt\Dual(x) \neq 0,
    \end{cases}
\end{equation}
If $\realopt v(x)=0$ a.\,e., and $\esssup_{x \in \Omega} \abs{\realopt\coDual(x)} < \noise$ holds -- meaning the constraint
\begin{equation}
    \label{eq:linfty-constr-discussion}
    \abs{S(\realoptu)(x)-\theData(x)}\le\noise 
\end{equation}
is almost never active -- then we can proceed as in \cref{sec:stability-l1} to show for any $\eps>0$ for small enough $t>0$ the estimate
\[
    \L^d(\{x \in \Omega \mid \abs{\alt\coDual(x)}=\noise\}) \le \eps.
\]
In consequence, $\dcVar \in \DerivConeF[\Dual|\alt\coDual]$ satisfies
\[
    \L^d(\{x \in \Omega \mid \dcVar(x) \ne 0\})
    \le
    \L^d(\{x \in \Omega \mid \abs{\alt\coDual(x)}=\noise\})
    \le
    \eps,
\]
and we deduce following \cref{sec:stability-l1} that
\[
    \bar\kvbound(\realoptq|0; H_{\realoptu})
    =\bar\kvbound(\realoptq|0; \basesetmap)=0.
\]
Therefore, by \cref{prop:pde-metric-regularity}, \emph{we have no metric regularity if the constraint \eqref{eq:linfty-constr-discussion} is almost never active.} (Any small change could force it to be active, and hence cause a large change in the dual variable.)
However, also if the constraint \eqref{eq:linfty-constr-discussion} is active on an open set $E$, we may reason as in \cref{sec:stability-l1} to show instability.
The only way to obtain stability is therefore with Moreau--Yosida  regularization.

\paragraph{Data stability}

Stability with respect to data (\cref{prop:pde-data-stability}) again requires that 
\[
    \bar\kvbound(\realoptq|0; \basesetmap) = \bar \kvbound(\realoptq|0; H_{\realoptu})>0.
\]
Hence, we cannot have data stability without Moreau--Yosida regularization. If $\gamma>0$, we additionally need to prove \eqref{eq:pde-cg}. Using the reverse triangle inequality and assumption \eqref{ass:a4}, we have that
\begin{equation}
        \alpha\norm{\coPrimal}^2+\iprod{\grad_u [\grad S(\realoptu)^*\realopt{\Dual}]\coPrimal}{\coPrimal} \ge (\alpha - C)\norm{\coPrimal}^2 \ge 
        c_G \norm{\coPrimal}^2
\end{equation}
for $\alpha$ sufficiently large and hence data stability. Since in this case we do not have an a priori bound on $\realoptpsi$, the choice of $\alpha$ depends on $\realoptpsi$ and hence on the data $y^\delta$.

\paragraph{Stability with respect to $\gamma$}

As in the case of $L^1$-fitting, stability with respect to the Moreau--Yosida parameter only holds at $\gamma>0$. 

\subsection{Regularization through projection}
\label{sec:discretization}
Discretization provides an alternative to regularization. 
Indeed, in practice, the data $\theData$ lies in a finite-dimensional subspace $Y' \subset Y=L^2(\Omega)$. With $\mathcal{P}$ the orthogonal projection from $Y$ into $Y'$, we then replace the fitting term $F$ by $F_{\mathcal{P}} \defeq F\circ \mathcal{P}$. We then have with $y=y'+y^\bot \in Y'\oplus (Y')^\bot=Y$ that
\begin{equation}
    F_{\mathcal{P}}^*(v) = \sup_{y^\bot}\, \langle y^\bot, v \rangle + (\sup_{y'}\, \langle y', v \rangle - F(y'))
    = \begin{cases} F^*(v), & v\in ((Y')^\bot)^\bot = Y',\\
        \infty, & v \notin Y'. \end{cases}
\end{equation}
We emphasize that in this approach we only discretize the fitting term, while the nonlinear operator $S$ and the regularizer $G$ remain infinite-dimensional.

Hence,
\[
    \subdiff F_{\mathcal{P}}^*(v)
    =
    \begin{cases}
        \subdiff F^*(v)+(Y')^\perp, & v \in Y', \\
        \emptyset, & v \not \in Y'.
    \end{cases}
\]
From the definition \eqref{eq:graphderiv} of the graphical derivative, we calculate that if
either $\Dual\not\in Y'$ or $\dir\Dual \not\in Y'$, then $D[\subdiff F_{\mathcal{P}}^*](\Dual|\coDual)(\dir\Dual)=\emptyset$. When $\Dual\in Y'$ and $\dir\Dual \in Y'$, we have for any $\tilde \coDual \in \subdiff F^*(\Dual) \isect(\coDual + (Y')^\bot)$ the inclusion
\[
    \begin{split}
    D[\subdiff F_{\mathcal{P}}^*](\Dual|\coDual)(\dir\Dual)
    &
    =
    D[\mathcal{P}\subdiff F^*](\Dual|\coDual)(\dir\Dual)+(Y')^\bot
    \\
    &
    \supset
    \mathcal{P}D[\subdiff F^*](\Dual|\tilde\coDual)(\dir\Dual)+(Y')^\bot
    \\
    &
    =
    D[\subdiff F^*](\Dual|\tilde\coDual)(\dir\Dual)+(Y')^\bot.
    \end{split}
\]
Consequently, by basic properties of convex hulls, we also have the inclusion
\begin{equation}
    \label{eq:discr-inner-approx}
    \widetilde{D[\subdiff F_{\mathcal{P}}^*]}(\Dual|\coDual)(\dir\Dual)
    \supset
    \begin{cases}
        \widetilde{D[\subdiff F^*]}(\Dual|\tilde\coDual)(\dir\Dual)+(Y')^\bot,
            & v, \Delta v \in Y',\, \coDual \in \subdiff F^*(\Dual) + (Y')^\bot, \\
        \emptyset, & \text{otherwise.}
    \end{cases}
\end{equation}
Suppose now that $DF^*$ satisfies \eqref{eq:dfstar-expr}, that is,
\begin{equation}
    \notag
    \widetilde{D[\subdiff F^*]}(\alt{\Dual}|\alt{\coDual})(\dir{\Dual}) \supset
    \begin{cases}
        \gamma \dir{\Dual} + \polar{\DerivConeF(\alt{\Dual}|\alt{\coDual})}, & \dir{\Dual} \in \DerivConeF[\alt{\Dual}|\alt{\coDual}], \\
        \emptyset, & \dir{\Dual} \not\in \DerivConeF[\alt{\Dual}|\alt{\coDual}].
    \end{cases}
\end{equation}
Since any cone $V$ and subspace $Y'$ satisfy the easily verified identity
\[
    \polar{(V \isect Y')} = \polar V + (Y')^\bot,
\]
it follows for $v, \Delta v \in Y'$ and $\coDual \in \subdiff F^*(\Dual) + (Y')^\bot $ that
\begin{equation}
    \label{eq:structure-f-project}
    \widetilde{D[\subdiff F_{\mathcal{P}}^*]}(\alt{\Dual}|\alt{\coDual})(\dir{\Dual}) \supset
    \begin{cases}
        \gamma \dir{\Dual} + \polar{\DerivConeF(\alt{\Dual}|\tilde{\coDual})} + (Y')^\bot, & \dir{\Dual} \in \DerivConeF[\alt{\Dual}|\tilde{\coDual}] \isect Y', \\
        \emptyset, & \text{otherwise}.
    \end{cases}
\end{equation}
This holds for arbitrary $\tilde \coDual \in \subdiff F^*(\Dual) \isect(\coDual + (Y')^\bot)$. In fact, in the following, we let $\tilde \coDual \in \subdiff F^*(\Dual)$ be the free parameter, and take, for example, $\coDual=\tilde\coDual$.

Let now $H_{\realoptu,\mathcal{P}}$ and $\somesetmap_{0,\mathcal{P}}$ be defined by \eqref{eq:h-def} and \eqref{eq:r0-def}, respectively, with $F_{\mathcal{P}}$ in place of $F$.
An application of \cref{thm:limit-polar-projection-lower-bound-fstar} to the inclusion \eqref{eq:structure-f-project} then gives the lower bound
\begin{equation}
    \label{eq:l1-metric-regularity-kvbound}
    \begin{aligned}[t]
    \bar\kvbound(\realoptq|0;\, & H_{\realoptu,\mathcal{P}})
    =\bar\kvbound(\realoptq|0; \somesetmap_{0,\mathcal{P}})
    \\
    &
    \ge
    \inv\alpha\sup_{t>0} \inf\left\{ \frac{\norm{\grad S(\realoptu)\grad S(\realoptu)^* \dcVar-\nu}}{\norm{\dcVar}}
        \,\middle|\, 
        \begin{array}{l}
            0 \ne \dcVar \in \DerivConeF[\alt{\Dual}|\tilde{\coDual}] \isect Y', \\ \nu \in \polar{\DerivConeF[\alt{\Dual}|\tilde{\coDual}]} + (Y')^\perp, \\
            \tilde{\coDual} \in \subdiff F^*(\alt{\Dual}),\, \alt{\Dual} \in Y',\\ \norm{\alt{\Dual}-\realopt{\Dual}} < t,\, \norm{\mathcal{P}(\tilde{\coDual}-\realopt{\coDual})} < t
        \end{array}
    \right\}
    \\
    &
    =
    \inv\alpha\sup_{t>0} \inf\left\{ \frac{\norm{\mathcal{P}\grad S(\realoptu)\grad S(\realoptu)^* \dcVar-\mathcal{P}\tilde\nu}}{\norm{\dcVar}}
        \,\middle|\, 
        \begin{array}{l}
            0 \ne \dcVar \in \DerivConeF[\alt{\Dual}|\tilde{\coDual}] \isect Y', \\ \tilde\nu \in \polar{\DerivConeF[\alt{\Dual}|\tilde{\coDual}]}, \\
            \tilde{\coDual} \in \subdiff F^*(\alt{\Dual}),\, \alt{\Dual} \in Y',\\ \norm{\alt{\Dual}-\realopt{\Dual}} < t,\, \norm{\mathcal{P}(\tilde{\coDual}-\realopt{\coDual})} < t
        \end{array}
    \right\}.
    \end{aligned}
\end{equation}

Let $\{e_1, \ldots, e_N\}$ be an orthonormal basis for $Y'$, and for any $v \in Y$‚ denote $v_i \defeq \iprod{e_i}{v}$.
Then \eqref{eq:l1-metric-regularity-kvbound} forces
\begin{equation}
    \label{eq:discr-t-bound}
    \sum_{i=1}^N \abs{\alt{\Dual}_i-\realopt{\Dual}_i}^2 \le t^2
    \quad\text{and}\quad
    \sum_{i=1}^N \abs{\tilde{\coDual}_i-\realopt{\coDual}_i}^2 \le t^2.
\end{equation}
Suppose there exist for each $i=1,\ldots,N$ closed sets $A_i, B_i \subset \R$ satisfying for each $\tilde{\coDual}$ and $\alt{\Dual}$ with $\tilde{\coDual} \in \subdiff F^*(\alt{\Dual})$ the conditions
\begin{subequations}
\label{eq:discr-cond}
\begin{align}
    \label{eq:discr-cond-sc}
    \text{either}\quad  (\alt{\Dual}_i \not\in A_i \text{ and }  \alt{\coDual}_i \in B_i)
    &\quad\text{or}\quad (\alt{\Dual}_i \in A_i \text{ and }  \alt{\coDual}_i \not\in B_i), \\
    \label{eq:discr-cond-b}
     \tilde\coDual_i \not\in B_i  \text{ and } \dcVar \in \DerivConeF[\alt{\Dual}|\tilde{\coDual}] \isect Y' 
    & \implies z_i=0, \\
    \label{eq:discr-cond-a}
     \alt{\Dual}_i \not\in A_i  \text{ and } \tilde\nu \in \polar{\DerivConeF[\alt{\Dual}|\tilde{\coDual}]} & \implies \tilde\nu_i=0.
\end{align}
\end{subequations}
Observe that the condition \eqref{eq:discr-cond-sc} is a type of strict complementarity condition. We observe the following two situations.
\begin{enumerate}[label=(\roman*)]
    \item If $\realopt{\Dual}_i \not\in A_i$, \eqref{eq:discr-t-bound} for small enough $t>0$ forces $\alt{\Dual}_i \not\in A_i$, and through \eqref{eq:discr-cond-sc}, $\tilde\coDual_i \in B_i$. Thus by \eqref{eq:discr-cond-a}, $\tilde\nu_i=0$.
    \item Likewise, if $\realopt{\coDual}_i \not\in B_i$, \eqref{eq:discr-t-bound} gives $\alt{\coDual}_i \not\in A_i$. Thus by \eqref{eq:discr-cond-b}, $z_i=0$, and by \eqref{eq:discr-cond-sc}, $\Dual_i \in A_i$.
\end{enumerate}
Thus the situations are mutually exclusive, and, by \eqref{eq:discr-cond-sc}, one of them has to occur. \emph{Importantly, therefore,  $\tilde\nu_i=0$ has to hold when we do not have the constraint $z_i=0$.} 
Therefore, letting $\tilde\nu_i$ vary freely whenever we have the constraint $z_i=0$, and $z_i$ vary freely whenever we have the constraint $\tilde\nu_i=0$, we obtain from \eqref{eq:l1-metric-regularity-kvbound} the lower estimate
\begin{equation}
    \label{eq:l1-metric-regularity-kvbound-discr-2}
    \begin{aligned}[t]
    \bar\kvbound(\realoptq|0; H_{\realoptu,\mathcal{P}})
    &
    =\bar\kvbound(\realoptq|0; \somesetmap_{0,\mathcal{P}})
    \\
    &
    \ge
    \inv\alpha \inf\left\{ \frac{\norm{\mathcal{P}\grad S(\realoptu)\grad S(\realoptu)^*\mathcal{P} \dcVar}}{\norm{\dcVar}}
        \,\middle|\, 
        z \in Y',\, z_i=0 \text{ if } \realopt{\Dual}_i \in A_i 
    \right\}
    \\
    &
    \ge
    \inv\alpha \inf\left\{ \frac{\norm{\mathcal{P}\grad S(\realoptu)\grad S(\realoptu)^*\mathcal{P} \dcVar}}{\norm{\dcVar}}
        \,\middle|\, 
        z \in Y'
    \right\}.
    \end{aligned}
\end{equation}
Since $\grad S(\realoptu)$ is invertible (as the inverse of a linear partial differential operator; cf. \eqref{eq:forward_lin}) and the orthogonal projection $\mathcal{P}$ is self-adjoint, the restriction of $\mathcal{P}\grad S(\realoptu)\grad S(\realoptu)^*\mathcal{P}$ to $Y'$ is a self-adjoint positive definite operator on the finite-dimensional space $Y'$ and therefore boundedly invertible. This implies the existence of a constant $c>0$ such that
\begin{equation}
    \norm{\mathcal{P}\grad S(\realoptu)\grad S(\realoptu)^*\mathcal{P} \dcVar} \geq c \norm{\dcVar}\qquad (\dcVar \in Y'),
\end{equation}
which yields $\bar\kvbound(\realoptq|0; H_{\realoptu,\mathcal{P}})=\bar\kvbound(\realoptq|0; \somesetmap_{0,\mathcal{P}}) \geq  \alpha^{-1}c >0$ and therefore metric regularity.

\bigskip

It remains to verify the conditions \eqref{eq:discr-cond}.
For this, let us further assume that the discretization is piecewise constant, that is $e_i=\abs{\Omega_i}^{-1}\chi_{\Omega_i}$, for some subdomains $\Omega_i \subset \Omega$ with $\sum_{i=1}^N \chi_{\Omega_i}=\chi_\Omega$. 
For the $L^1$-fitting example, \eqref{eq:D-fstar-l1-constr} then gives
\begin{align}
\abs{\alt{\Dual}_i}<1 &\implies \tilde{\coDual}\chi_{\Omega_i}=0.
\intertext{For $t>0$ small enough, \eqref{eq:l1fitting-z} and \eqref{eq:l1fitting-nu} therefore give}
    \label{eq:discr-cond1}
    \abs{\alt{\Dual}_i}<1 
    &\implies \tilde\nu_i=0 \text{ and } z_i \in \R.
\end{align}
This suggests to take $A_i=\{-1,1\}$ to satisfy \eqref{eq:discr-cond-a}.
Then, for \eqref{eq:discr-cond-sc} to be satisfied, the condition \eqref{eq:D-fstar-l1-constr} gives the only possibility of $B_i=\{0\}$. Further, for the strict complementarity within \eqref{eq:discr-cond-sc} to hold, it is  necessary to impose that the middle two cases in \eqref{eq:l1fitting-z} and \eqref{eq:l1fitting-nu} do not occur. This strict complementary condition may also be stated as
\begin{equation}
    \label{eq:l1-strict-compl}
    (1-\abs{\realopt\Dual_i})\realopt{\coDual}_i=0
    \quad\text{and}\quad
    0<(1-\abs{\realopt\Dual_i})+\abs{\realopt{\coDual}_i}.
\end{equation}
To verify \eqref{eq:discr-cond-b}, suppose that $\tilde\coDual_i \not \in B_i$.
It may happen that $\tilde\coDual\chi_{\Omega_i}$ behaves wildly.
Nevertheless, by $\tilde\coDual_i \not \in B_i$, there are points $x \in \Omega_i$ where necessarily $\tilde\coDual(x)\ne 0$. Hence the condition \eqref{eq:D-fstar-l1-constr} forces $0 \ne \alt{v}(x)=\sign \tilde\coDual(x)$. Thus $z(x)=0$, which through $z \in Y'$ forces $z_i=0$, proving \eqref{eq:discr-cond-b}. Thus \eqref{eq:discr-cond} and the estimate in \eqref{eq:l1-metric-regularity-kvbound-discr-2} hold for projection-regularized $L^1$ fitting under the strict complementarity condition \eqref{eq:l1-strict-compl}.

For $L^\infty$-fitting, still using piecewise constant discretization, studying \eqref{eq:linfty-fitting-constr}, \eqref{eq:linffitting-z}, and \eqref{eq:linffitting-nu}, we see that we can take $A_i=\{0\}$ and $B_i=\{-\delta, \delta\}$ to obtain the same results under the strict complementarity condition
\begin{equation}
    \label{eq:linfty-strict-compl}
    \realopt\Dual_i(\delta-\abs{\realopt{\coDual}_i})=0
    \quad\text{and}\quad
    0<\abs{\realopt\Dual_i}+(\delta-\abs{\realopt{\coDual}_i}).
\end{equation}

Note that the strict complementarity conditions \eqref{eq:l1-strict-compl} and \eqref{eq:linfty-strict-compl} can always be satisfied after a small perturbation of $\realopt\Dual$, if necessary. Indeed, if $\realopt\coDual_i$ is active ($\realopt\coDual_i=0$ resp.~$\abs{\realopt\coDual_i}=\delta$), then by \eqref{eq:D-fstar-l1-constr} resp.~\eqref{eq:linfty-fitting-constr}, $\realopt\Dual_i$ can be made inactive -- in which case the strict complementarity condition is satisfied -- while maintaining the optimality condition $\realopt\coDual \in \subdiff F_{\mathcal{P}}(\realopt\Dual)$. This change in $\realopt\Dual$ can, however, alter the constant in \eqref{eq:pde-cg}.

Observe further that for the original infinite-dimensional problem, similar strict complementarity conditions (pointwise almost everywhere) could be derived, but these would not be sufficient to obtain metric regularity since we still have the problem that the inverse of $\grad S(\realoptu)\grad S(\realoptu)^*$ is unbounded on $L^2(\Omega)$.
Further, in the $L^2$ topology, even a strong complementarity condition (with $\eps$ lower bound in the inequality) would not be sufficient to transport it from the optimal solution to the perturbed variables $\alt\Dual$ and $\alt\coDual$.

\bigskip

For stability with respect to perturbations of the data,
condition 
\eqref{eq:pde-cg} is also required. Since this condition is independent of $F$, it will hold under discretization whenever it holds for the original problem (with a possibly different constant $c_G$ since now $\realoptpsi\in Y'\subset Y$).

\section{Conclusion}

The purpose of this work was to derive explicit stability criteria for solutions to saddle-point problems in Hilbert spaces, in particular those arising from the minimization of nonsmooth nonlinear functionals commonly occurring in parameter identification, image processing, or PDE-constrained optimization problems. Our main results are a pointwise characterization of regular coderivatives of convex subdifferentials of integral functionals and explicit conditions for metric regularity of the corresponding variational inclusions. These make it possible to verify the Aubin property for concrete problems. While the results for our model problems are mostly negative (no regularity unless regularization or discretization is introduced), they are still useful: Our function-space analysis provides a unified framework for \emph{any} conforming regularization; in particular, it shows that the stability properties are independent of the discretization of the unknown parameter. Furthermore, for \emph{arbitrary small} fixed Moreau--Yosida parameters, the properties are also independent of the discretization of the data; this is especially important for the convergence of numerical algorithms, where this translates in a discretization-independent number of iterations required to reach a given tolerance.

This work can be extended in a number of directions. In a follow-up paper, we will apply our results on the Aubin property of pointwise set-valued mappings to the convergence analysis of the nonlinear primal-dual extragradient method from \cite{Valkonen:2014} in function spaces. We also plan to investigate the possibility of obtaining \emph{partial} stability results with respect to only the primal variable without regularization or discretization. 
An alternative would be to exploit the uniform stability with respect to regularization for fixed discretization, and with respect to discretization for fixed regularization, to obtain a combined convergence for a suitably chosen net $(\gamma,h)\to (0,0)$; this is related to the adaptive regularization and discretization of inverse problems \cite{KaltenbacherKirchnerVexler11}.
Furthermore, it would be of interest to extend our analysis to include nonsmooth regularizers $G$, which were excluded in the current work for the sake of the presentation.
It would also be worthwhile to try to adapt the stability analysis to make use of the limiting coderivative and its richer calculus; in particular to 
remove the geometric derivability assumption by directly working with the limiting coderivative.
Finally, the pointwise characterization of coderivatives could be useful in deriving more explicit optimality conditions for bilevel optimization problems.

\appendix

\section{Coderivative formula}
\label{sec:coderivative}

In this appendix, we give an explicit characterization of the regular coderivative for a class of set-valued mapping covering the examples in \cref{sec:pointwise:examples}.
\begin{proposition}
    \label{prop:frechetcod-cone-operator}
    For a set-valued operator $R: Q \setto Q$, on a Hilbert space $Q$, suppose that
    \begin{equation}
        \label{eq:linear-polar-form-app}
        \widetilde{D \somesetmap}(q|w)(\dir{q})
        =
        \begin{cases}
            T_q \dir{q} + \polar{\DerivCone[q|w]}, & \dir{q} \in \DerivCone[q|w], \\
            \emptyset, & \text{otherwise},   
        \end{cases}
    \end{equation}
    for a linear operator $T\defeq T_q: Q \to Q$, dependent on $q$ but not $w$, and a cone $\DerivCone[q|w] \subset Q$, dependent on both $q$ and $w$.
    Then
    \begin{equation}
        \label{eq:frechetcod-cone-operator}
        \frechetCod \somesetmap(q|w)(\dir{w})
        =
        \begin{cases}
            T_q^*\dir{w} + \polar{\DerivCone[q|w]}, & -\dir{w} \in {\DerivCone[q|w]}^{\circ\circ} \\
            \emptyset, & \text{otherwise}.
        \end{cases}
    \end{equation}
\end{proposition}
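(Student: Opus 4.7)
The plan is to reduce the claim to a direct algebraic calculation with the upper adjoint. Specifically, I would start from the identity $\frechetCod R(q|w) = [\widetilde{DR}(q|w)]^{*+}$, which in the intended application of this proposition (to pointwise set-valued mappings of the form in \cref{corollary:p-form}) is provided by \cref{thm:p-graph-deriv}, and more generally follows from the polarity relation $\widehat N = \polar{[\conv T]}$ noted before \eqref{eq:upper-adj-cod-tilde}. Once this is granted, the structural assumption \eqref{eq:linear-polar-form-app} translates $\dir{q} \in \frechetCod R(q|w)(\dir{w})$ into the two-parameter inequality
\[
    \iprod{\dir{q} - T_q^*\dir{w}}{\dir{\alt{q}}} \le \iprod{\dir{w}}{\tilde{p}}
    \qquad (\dir{\alt{q}} \in \DerivCone[q|w],\ \tilde{p} \in \polar{\DerivCone[q|w]}),
\]
after rearranging $\iprod{\dir{q}}{\dir{\alt{q}}} \le \iprod{\dir{w}}{T_q \dir{\alt{q}} + \tilde{p}}$ and moving $T_q$ to the other side of the pairing.

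The next step is to decouple the two free parameters. Setting $\tilde{p} = 0$ yields $\iprod{\dir{q} - T_q^*\dir{w}}{\dir{\alt{q}}} \le 0$ for every $\dir{\alt{q}} \in \DerivCone[q|w]$, which by the very definition of the polar cone gives
\[
    \dir{q} - T_q^*\dir{w} \in \polar{\DerivCone[q|w]},
\]
the affine inclusion appearing in the first case of \eqref{eq:frechetcod-cone-operator}. Conversely, holding $\dir{\alt{q}}$ fixed and scaling $\tilde{p} \mapsto \lambda \tilde{p}$ with $\lambda \to \infty$ along the cone $\polar{\DerivCone[q|w]}$, the right-hand side can remain above the (bounded) left-hand side only if $\iprod{\dir{w}}{\tilde{p}} \ge 0$ for every $\tilde{p} \in \polar{\DerivCone[q|w]}$. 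This is exactly the condition $-\dir{w} \in \polar{(\polar{\DerivCone[q|w]})} = {\DerivCone[q|w]}^{\circ\circ}$, producing the side condition of \eqref{eq:frechetcod-cone-operator}.

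For the reverse implication, I would simply verify that if $\dir{q} \in T_q^*\dir{w} + \polar{\DerivCone[q|w]}$ and $-\dir{w} \in {\DerivCone[q|w]}^{\circ\circ}$, then for arbitrary $\dir{\alt{q}} \in \DerivCone[q|w]$ and $\tilde{p} \in \polar{\DerivCone[q|w]}$ the left-hand side of the displayed inequality is nonpositive while the right-hand side is nonnegative, so the inequality holds trivially. Together with the first direction, this establishes \eqref{eq:frechetcod-cone-operator}.

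The main (and only) obstacle is the justification of $\frechetCod R(q|w) = [\widetilde{DR}(q|w)]^{*+}$ at the level of generality of the statement; once this is available, as it is for the pointwise mappings of \cref{sec:pointwise:examples}, the rest of the argument is a clean separation-of-variables calculation with cones and their polars. The second equality in \eqref{eq:upper-adj-cod-tilde}, $[DR]^{*+} = [\widetilde{DR}]^{*+}$, is automatic and is not actually needed here since the structural hypothesis is already stated for $\widetilde{DR}$ itself.
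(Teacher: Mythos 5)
Your proposal is correct and is essentially the paper's own argument in different notation: the paper computes $\widehat N((q,w);\graph R)$ as the polar of the convexified tangent cone and decouples the two conditions $\Delta q' + T_q^*\Delta w' \in \polar{\DerivCone}$ and $\Delta w' \in \DerivCone^{\circ\circ}$ in exactly the way you do via setting $\tilde p=0$ and scaling $\tilde p$ along $\polar{\DerivCone}$, with the sign flip in $\frechetCod$ handled by \cref{def:frechetcod}. You also correctly isolate the only real prerequisite, $\frechetCod R = [\widetilde{DR}]^{*+}$, which the paper likewise takes as given (citing \cref{sec:findimco}) and which holds for the intended pointwise applications by \cref{thm:p-graph-deriv}.
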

Note that if the cone $\DerivCone[q|w]$ is closed and convex -- in particular, a closed subspace -- then ${\DerivCone[q|w]}^{\circ\circ} = \DerivCone[q|w]$.
\begin{proof}
    We denote for short $\DerivCone \defeq \DerivCone[q|w]$.
    The expression \eqref{eq:linear-polar-form-app} simply says that
    \[
        \conv\bigl(T((q,w); {\graph D\somesetmap})\bigr)
        =\{ (\Delta q,\Delta w) \in Q \times Q \mid
            \Delta q \in \DerivCone,\,
            \Delta w \in T_q \Delta q + \polar{\DerivCone}
        \}.
    \]
    We already know from \cref{sec:findimco} that
    \[
        \widehat N((q,w); {\graph D\somesetmap})=
        \polar{T((q,w); {\graph D\somesetmap})}
        =\polar{\conv\bigl(T((q,w); {\graph D\somesetmap})\bigr)}.
    \]
    Thus,
    \[
        \begin{split}
        \widehat N((q,w); {\graph D\somesetmap})
        &
        =
        \left\{
            (\Delta q',\Delta w') \in Q^2
            \,\middle|\,
            \begin{array}{l}
            \iprod{\Delta q'}{\Delta q} +\iprod{\Delta w'}{\Delta w}\le 0
            \\
            \hfill
            \text{ for }
            \Delta q \in \DerivCone,\,
            \Delta w \in T_q \Delta q + \polar{\DerivCone}
            \end{array}
        \right\}
        \\
        &
        =
        \left\{
            (\Delta q',\Delta w') \in Q^2
            \,\middle|\,
            \begin{array}{l}
            \iprod{\Delta q'+T_q^* \Delta w'}{\Delta q} +\iprod{\Delta w'}{\Delta w}\le 0
            \\
            \hfill
            \text{ for }
            \Delta q \in \DerivCone,\,
            \Delta w \in \polar{\DerivCone}
            \end{array}
        \right\}
        \\
        &
        =
        \left\{
            (\Delta q',\Delta w') \in Q^2
            \,\middle|\,
            \Delta q' + T_q^* \Delta w' \in \polar{\DerivCone},\,
            \Delta w' \in {\DerivCone}^{\circ\circ}
        \right\}.
        \end{split}
    \]
    By \cref{def:frechetcod}, we have
    \begin{equation}
        \frechetCod \somesetmap(q | w)(\dir{w}) \defeq
        \left\{ \dir{q} \in Q \mid (\dir{q}, -\dir{w}) \in \widehat N((q, w); \graph \somesetmap) \right\},
    \end{equation}
    from which \eqref{eq:frechetcod-cone-operator} follows.
\end{proof}

\section*{Acknowledgments}

In Cambridge, T.~Valkonen has been supported by the King Abdullah University of Science and Technology (KAUST) Award No.~KUK-I1-007-43, and EPSRC grants Nr.~EP/J009539/1 ``Sparse \& Higher-order Image Restoration'', and Nr.~EP/M00483X/1 ``Efficient computational tools for inverse imaging problems''. While in Quito, T.~Valkonen has moreover been supported by a Prometeo scholarship of the Senescyt (Ecuadorian Ministry of Science, Technology, Education, and Innovation).

\printbibliography

\end{document}